\theoremstyle{thmstyleone}%
\newtheorem{theorem}{Theorem}
\newtheorem{lemma}{Lemma}%
\theoremstyle{thmstyletwo}%
\newcommand{\weakto}{\rightharpoonup}
\newcommand{\setto}{\rightrightarrows}
\newcommand{\zer}[1]{\mathrm{zer}\left( #1 \right)}
\newcommand{\R}{\mathbb R}
\newcommand{\Sp}{\mathbb{S}_+}
\newcommand{\I}{\mathrm{Id}}
\newcommand{\1}{\mathbbm{1}}
\newcommand{\norm}[1]{\left\lVert#1\right\rVert}
\newcommand{\HH}{\mathcal{H}}
\newcommand{\GG}{\mathcal{G}}
\newcommand{\Gy}{\bigoplus_{k \in \mathcal{K}}\GG_k} %
\newcommand{\BB}{\mathcal{B}}
\newcommand{\Hx}{\mathcal{H}_\text{x}}
\newcommand{\Hy}{\mathcal{H}_\text{y}}
\newcommand{\Bx}{\mathcal{B}_\text{x}}
\newcommand{\By}{\mathcal{B}_\text{y}}
\newcommand{\Hz}{\mathcal{H}_\text{z}}
\newcommand{\Gk}{\GG_k}
\renewcommand{\v}[1]{\mathbf{#1}}
\newcommand{\PA}{\v{P}_A}
\newcommand{\PAs}{\PA^*}
\newcommand{\PB}{\v{P}_B}
\newcommand{\PBs}{\PB^*}
\newcommand{\PAi}{\v{P}^A}
\newcommand{\PBj}{\v{P}^B}
\newcommand{\MT}{\v{M}^*}
\newcommand{\RA}{\v{R}_A}
\newcommand{\RAs}{\RA^*}
\newcommand{\RAi}{R^A}
\newcommand{\RB}{\v{R}_B}
\newcommand{\RBj}{R^B}
\newcommand{\RBs}{\RB^*}
\newcommand{\xb}{\v{x}'}
\newcommand{\yb}{\v{y}'}
\newcommand{\ybk}{\v{y}'}
\newcommand{\ubar}[1]{\text{\b{$#1$}}}
\DeclareMathOperator{\diag}{diag}
\DeclareMathOperator{\nullspace}{null}
\DeclareMathOperator{\range}{ran}
\DeclareMathOperator{\gra}{graph}
\DeclareMathOperator*{\minimize}{minimize}
\DeclarePairedDelimiter\floor{\lfloor}{\rfloor}
\newenvironment{notation*}
  {\par\vspace{\abovedisplayskip}\noindent
   \tabularx{\columnwidth}{>{$}l<{$} @{${}:{}$} >{\raggedright\arraybackslash}X}}
  {\endtabularx\par\vspace{\belowdisplayskip}}
\begin{document}

\title{Coupled Adaptable Backward-Forward-Backward Resolvent Splitting Algorithm (CABRA): A Matrix-Parametrized Resolvent Splitting Method for the Sum of Maximal Monotone and Cocoercive Operators Composed with Linear Coupling Operators}

\author*{\fnm{Peter} \sur{Barkley}}\email{peter.barkley@nps.edu}
\author{\fnm{Robert L} \sur{Bassett}}\email{robert.bassett@nps.edu}

\affil{\orgdiv{Operations Research Department}, \orgname{Naval Postgraduate School}, \orgaddress{\street{1 University Circle}, \city{Monterey}, \postcode{93943}, \state{CA}, \country{USA}}}

\abstract{
We present a novel matrix-parametrized frugal splitting algorithm which finds the zero of a sum of maximal monotone and cocoercive operators composed with linear selection operators.
We also develop a semidefinite programming framework for selecting matrix parameters and demonstrate its use for designing matrix parameters which provide beneficial diagonal scaling, allow parallelization, and adhere to a given communication structure.
We show that taking advantage of the linear selection operators in this way accelerates convergence in numerical experiments, and show that even when the selection operators are the identity, we can accelerate convergence by using the matrix parameters to provide appropriately chosen diagonal scaling.
We conclude by demonstrating the applicability of this algorithm to multi-stage stochastic programming,
outlining a decentralized approach to the relaxed stochastic weapon target assignment problem which splits over the source nodes and has low data transfer and memory requirements.
}

\maketitle

\section{Introduction}
\label{cab:Sec:Introduction}

This research develops the Coupled Adaptable Backward-Forward-Backward Algorithm (CABRA), a matrix-parametrized algorithm which finds a zero of the sum of finitely-many maximal monotone and cocoercive operators which are composed with selection operators.
By accounting for the structure imposed by the selection operators, CABRA permits a fully decentralized splitting in which only the information relevant to each operator must be provided.
We prove the algorithm's convergence and provide a matrix parameter selection framework which allows us to optimize the matrix parameters to meet a wide range of objectives.
These include decentralized parallel execution over arbitrary connected communication graphs and operator preconditioning via diagonal scaling.
In numerical experiments with problems where the selection operators are not the identity, we show that CABRA converges faster, and uses less memory, than comparable forward-backward resolvent splitting methods, and in decentralized implementations it also reduces the amount of data exchanged between compute nodes.
Our experiments also show that designing the matrix parameters as preconditioners in light of the expected operator output provides significantly faster convergence. 
We conclude by applying CABRA to a continuous relaxation of the multi-stage stochastic weapon target assignment problem, illustrating its use as a splitting method for stochastic programming, which frequently requires composition with a set of selection operators. 

\subsubsection*{Coupled Inclusion Problem}
We consider the following problem.
Let $\mathcal{K} = \{1, \dots, p\}$, $\mathcal{I} = \{1, \dots, n\}$, and $\mathcal{J} = \{1, \dots, m\}$.
Let $(\GG_k)_{k \in \mathcal{K}}$ be real Hilbert spaces.
For all $i \in \mathcal{I}$, let $\mathcal{K}^A_i \subseteq \mathcal{K}$ be a set of subvector indices, $\HH_i = \bigoplus_{k \in \mathcal{K}^A_i} \GG_k$ be the product space over the indices in $\mathcal{K}^A_i$, $A_i: \HH_i \setto \HH_i$ be a maximal monotone operator, and $\RAi_i: \bigoplus_{k \in \mathcal{K}}\GG_k \to \HH_i$ be a selection operator which selects the subvectors given by $\mathcal{K}^A_i$, so that $\RAi_i y = (y_{k})_{k \in \mathcal{K}^A_i}$. 
Similarly, for all $j \in \mathcal{J}$, let $\mathcal{K}^B_j \subseteq \mathcal{K}$, $\BB_j = \bigoplus_{k \in \mathcal{K}^B_j} \GG_k$, and $B_j: \BB_j \to \BB_j$ be a $\beta_j$-cocoercive operator.
Let $\RBj_j: \bigoplus_{k \in \mathcal{K}}\GG_k \to \BB_j$ be a selection operator defined as $\RBj_j y = (y_{k})_{k \in \mathcal{K}^B_j}$.
We consider the following problem:
\begin{equation}
    \label{cab:zero_in_coupled_sum}
    \text{find}\; y \in \bigoplus_{k \in \mathcal{K}}\GG_k \;\text{such that}\; 0 \in \sum_{i \in \mathcal{I}}(\RAi_i)^* A_i \RAi_i y + \sum_{j \in \mathcal{J}} (\RBj_j)^* B_j \RBj_j y.
\end{equation} 
\subsubsection*{Algorithmic Approach}
For all $k \in \mathcal{K}$, we define $n_k$ as the number of operators $A_i$ for which $k \in \mathcal{K}^A_i$, and assume $n_k \geq 2$.
Let $\Hx = \bigoplus_{i \in \mathcal{I}} \HH_i$, $\Bx = \bigoplus_{j \in \mathcal{J}} \BB_j$, and $\Hz = \bigoplus_{k \in \mathcal{K}} \Gk^{n_k-1}$.
For $x_i \in \HH_i$, $x'_j \in \BB_j$, and $z_k \in \Gk^{n_k-1}$, we define the \textit{lifted} variables $\v{x} = (x_i)_{i \in \mathcal{I}} \in \Hx$, $\xb = (x'_j)_{j \in \mathcal{J}} \in \Bx$, and $\v{z} = (z_k)_{k \in \mathcal{K}}$.
We define lifted operators $\v{A}: \Hx \setto \Hx$ and $\v{B}: \Bx \to \Bx $ such that
\begin{align}
    \label{cab:AB_def}
\v{A}\v{x} &= (A_i x_i)_{i \in \mathcal{I}}\\
\v{B}\xb &= (B_j x'_j)_{j \in \mathcal{J}}.
\end{align}
We choose a scaling parameter $\alpha \in (0,4)$, a step size $\gamma \in (0, 2 - \frac{\alpha}{2})$, and a set of bounded linear operators $\v{D}_A: \Hx \to \Hx$, $\v{L}_A: \Hx \to \Hx$, $\v{M}_A: \Hx \to \Hz$, $\v{Q}_A: \Bx \to \Hx$, and $\v{K}_A: \Hx \to \Bx$ which we use to individually scale and sum each subvector for each $\HH_i$ in $\Hx$. Section \ref{cab:Sec:CABRA} shows how these operators can be constructed as permutations of $|\mathcal{K}|$ separate sets of matrix parameters corresponding to each subvector.
We then construct CABRA as a fixed point method iterating over $\v{z}$ as $\v{z}^{\nu+1} = \v{z}^\nu + \gamma \v{M}_A\v{x}$ where $\v{x}$ is given by the resolvent
\begin{equation}
    \label{cab:cfbd_itr}    
    \v{x} = J_{\alpha \v{D}_A^{-1}\v{A}}\left(\v{D}_A^{-1}\left[-\MT_A \v{z}^\nu + 2\v{L}_A\v{x} - \alpha \v{Q}_A\v{B}\v{K}_A\v{x}\right]\right).
\end{equation}
We design $\v{L}_A$ and $\v{Q}_A\v{B}\v{K}_A$ to be strictly block lower triangular, so that \eqref{cab:cfbd_itr} can be found via forward substitution by iterating over the $n$ individual resolvents
\begin{equation}
    \label{cab:cfbd_itr_i}    
    x_i = J_{\alpha (\v{D}_A^{-1})_i A_i}\left((\v{D}_A^{-1})_i\left[-(\MT_A\v{z}^\nu)_i + 2\sum_{d=1}^{i-1}(\v{L}_A)_{id} x_d - \alpha b_i\right]\right),
\end{equation}
where $b_i$ is a weighted sum of cocoercive operator outputs which depends only on $(x_1, \dots, x_{i-1})$ and $(\v{L}_A)_{id} x_d$ is an abbreviated notation for a diagonal scaling of only those subvectors in $x_d$ which correspond with subvectors in $x_i$ (that is, only $k \in \mathcal{K}^A_i \cap \mathcal{K}^A_d$). 
Our algorithm generalizes the forward-backward splitting frameworks recently presented by {\AA}kerman, Chenchene, Giselsson, and Naldi \cite{aakerman2025splitting}, and Dao, Tam, and Truong \cite{dao2025general}, which were released in pre-print as this work was being completed.

\subsubsection*{Organization}
We provide required background information and a literature review in Section \ref{cab:Sec:Background}.
We then describe CABRA in detail and prove its convergence in Section \ref{cab:Sec:CABRA}.
In Section \ref{cab:Sec:Opt}, we show that valid matrix parameters can be found via a family of semi-definite programs, and conclude by providing a set of numerical examples and applications in Section \ref{cab:Sec:Examples}.

\section{Background}
\label{cab:Sec:Background}

Following the publication of Ryu's work on the uniqueness of Douglas-Rachford splitting as a frugal resolvent splitting (FRS) with minimal lifting \cite{ryu2020uniqueness}, a large body of work has developed operator splitting algorithms with minimal lifting \cite{malitsky2023resolvent,tam2023frugal,morin2024frugal,aragon2023primal,aragon2023distributed,bredies2022degenerate,bredies2024graph,aragon2023distributed,aragon2024forward}.
In \cite{ryu2020uniqueness}, Ryu defines a \textit{resolvent splitting} as a splitting constructed only with resolvents, scalar multiplication, and addition, and calls a resolvent splitting \textit{frugal} if it evaluates each operator only once in each iteration.
He also constructs a frugal three-operator resolvent splitting on $\HH$ exhibiting minimal lifting, in that it lifts to $\HH^2$ rather than $\HH^3$. 
Malitsky and Tam generalize this in \cite{malitsky2023resolvent} to find an FRS with minimal lifting over $n$ operators.
This algorithm is generalized further into a family of FRS algorithms defined by a set of matrix parameters in \cite{tam2023frugal} and, in a related work in \cite{bredies2024graph}, a family of FRS algorithms defined by graphs (and their Laplacians).
The work in \cite{bassett2024optimaldesignresolventsplitting} introduces a semi-definite programming (SDP)-based approach to FRS matrix parameter selection.

The analysis in \cite{morin2024frugal} extends these approaches to the summation of a set of maximal monotone and cocoercive operators. 
Arag{\'o}n-Artacho, Malitsky, Tam, and Torregrosa-Bel{\'e}n build graph-based frugal splittings with minimal lifting  in a similar setting in \cite{aragon2024forward}, 
requiring the number of cocoercive operators to be less than the number of maximal monotone operators, and imposing a number of subgraph requirements on the unweighted graphs associated with the parameters.

The recent work of {\AA}kerman, Chenchene, Giselsson, and Naldi in \cite{aakerman2025splitting} extends this broadly to define not only the sufficient conditions for the matrix parametrization, but also prove that these conditions are necessary for any frugal splitting operator with minimal lifting and the fixed-point encoding property.  %
A similar algorithm is given by the recent work of Dao, Tam, and Truong \cite{dao2025general}.
Algorithm \ref{cab:afb_d_alg} gives an independently developed variant of these algorithms, which this research generalizes to include coupled linear selection operators.
\begin{algorithm} 
    \caption{Adaptable Backward-Forward-Backward Resolvent Splitting Algorithm}\label{cab:afb_d_alg} 
    \begin{algorithmic}[1] 
    \Require $\alpha \in (0, 4)$; $\gamma \in (0, 2 - \frac{\alpha}{2})$; $\v{z}^0 \in \GG^{n-1}$, $\v{D}$, $\v{M}$, $\v{L}$, $\v{Q}$, $\v{K}$ satisfying \eqref{cab:afb_reqs} and \eqref{cab:cutoff_assumptions}.
    \State $\nu \gets 0$
    \Repeat
      \State $\v{x}^\nu = J_{\alpha \v{D}^{-1}\v{A}}\left(\v{D}^{-1}\left[-\v{M}^* \v{z}^\nu + 2\v{L}\v{x}^\nu - \alpha \v{Q}\v{B}\v{K}\v{x}^\nu\right]\right)$ \label{cab:afb_d_itr1}
      \State $\v{z}^{\nu+1} = \v{z}^\nu + \gamma \v{M}\v{x}^\nu$ \label{cab:afb_d_itr2}
      \State $\nu \gets \nu+1$
    \Until{convergence}
\end{algorithmic}
\end{algorithm}

\subsection{Notation and Definitions}
Let $\HH$ be a real Hilbert space with inner product $\langle \cdot, \cdot \rangle$ and induced norm $\norm{\cdot}$.
We refer to a set-valued operator $A: \HH \setto \HH$ as monotone if $\langle x-y, u-v\rangle \geq 0$ for all $(x, u)$ and $(y, v)$ in $\gra(A) = \{(x, u) : u \in Ax\}$.
It is maximal monotone if its graph is not properly contained in the graph of any other monotone operator.
An operator $A$ is said to be $\beta$-cocoercive if $\langle x-y, Ax-Ay\rangle \geq \beta \norm{Ax - Ay}^2$; we note that cocoercive operators are necessarily Lipschitz and single-valued.
The resolvent of an operator $A$ is given by $J_A = (\I + A)^{-1}$, where $\I$ is the identity on $\HH$.
We note that the resolvent of a maximal monotone operator is firmly nonexpansive and single-valued with full domain \cite{bauschke_combettes}.
An operator $A$ is $\gamma$-averaged nonexpansive if \[\norm{u - v}^2 \leq \norm{x-y}^2 - \frac{1-\gamma}{\gamma}\norm{(x - u) - (y - v)}^2\] for all $(x, u)$ and $(y, v)$ in $\gra(A)$.
We write $\Sp^n$ for the set of symmetric $n \times n$ positive semi-definite (PSD) matrices, $\succeq$ for the generalized inequality on the PSD cone, and $\otimes$ for the Kronecker product.
We denote lifted linear operators in bold, so for a matrix $M \in \R^{d \times n}$, $\v{M}: \HH^n \to \HH^d$ is defined as $\v{M} = M \otimes \I$ where $\I$ is the identity in $\HH$.
We define the matrix $\text{tri}(Z)$ as the strict lower triangle of square matrix $Z$, and overload the diagonal notation so that $\diag(Z)$ returns the diagonal square matrix with the diagonal of $Z$ and zeros elsewhere when $Z$ is a square matrix and $\diag(x)$ returns the diagonal matrix with vector $x$ on the diagonal when $x$ is a vector.

\section{Coupled Adaptable Backward-Forward-Backward Resolvent Splitting}
\label{cab:Sec:CABRA}

We now turn our attention to problem \eqref{cab:zero_in_coupled_sum}.
We first describe the coupling operators $\RAi_i$ and $\RBj_j$ in more detail. 
Recall that $p$ gives the number of subvectors in the vector $y$, $\mathcal{K}^A_{i}$ gives the indices of the subvectors of $y$ which maximal monotone operator $A_i$ receives from $\RAi_i$, and $\mathcal{K}^B_{j}$ gives the indices of the subvectors of $y$ which cocoercive operator $B_j$ receives from $\RBj_j$.
Therefore $\RAi_i$ specifically returns the subvectors of $y$ which are in $\HH_i = \bigoplus_{k \in \mathcal{K}^A_i} \GG_k$, and similarly, $\RBj_j$ returns only the subvectors of $y$ which are in $\BB_j = \bigoplus_{k \in \mathcal{K}^B_j} \GG_k$.
We define the lifted coupling operators $\RA: \bigoplus_{k \in \mathcal{K}}\GG_k \to \Hx$ and $\RB: \bigoplus_{k \in \mathcal{K}}\GG_k \to \Bx$ as
\begin{align}
\RA y &= (\RAi_i y)_{i \in \mathcal{I}}\\
\RB y &= (\RBj_j y)_{j \in \mathcal{J}}.
\end{align}
Operators $\RAs$ and $\RBs$ are the adjoint operators of $\RA$ and $\RB$ respectively with $\RAs \v{A}\v{x}$ returning a set containing $\RAs \v{z}$ for each $\v{z} \in \v{A}\v{x}$.
With the definitions above, we can succinctly rewrite problem \eqref{cab:zero_in_coupled_sum} as 
\begin{equation}
    \label{cab:zero_in_mono_coco_coupled}
\text{find}\; y \in \bigoplus_{k \in \mathcal{K}}\GG_k \;\text{such that}\; 0 \in \RAs\v{A}\RA y + \RBs \v{B}\RB y.
\end{equation}

\subsection{Coupled Algorithm Matrices}\label{cab:Sec:coupled_matrices}
We now discuss the lifting on $y$ and the determination of the matrix parameters.
Let $\mathcal{I}_k$ be the ordered set of indices $i$ such that $k \in \mathcal{K}^A_i$.
Let $s(i,k)$ give the index of $i$ in $\mathcal{I}_k$.
Let $n_k = |\mathcal{I}_k|$, so that for each subvector $y_k \in \Gk$ of $y \in \Gy$, $n_k$ gives the number of operators $A_i$ which operate on $y_k$.
We recall our assumption from section \ref{cab:Sec:Introduction} that each subvector $y_k$ is an argument for at least two operators, so each $n_k \geq 2$.
For each subvector $y_k$ of $y$, we form a lifted space $\GG_k^{n_k}$ which contains a copy of $y_k$ for each operator index in $\mathcal{I}_k$.
Let $\Hy = \bigoplus_{k \in \mathcal{K}}\GG_k^{n_k}$.
Let $\v{y} \in \Hy$ be a lifted decision variable for $y$ grouped by subvector, with $\v{y}_k \in \GG_k^{n_k}$ and $\v{y}_{ks} \in \GG_k$ giving the $s$-th (sub-)subvector in $\v{y}_k$ for $s \in \{1, \dots, n_k\}$.

Let $\mathcal{J}_k$ be the ordered set of indices $j$ such that $k \in \mathcal{K}^B_j$ and let $m_k = |\mathcal{J}_k|$, so that for each subvector $y_k$, $m_k$ gives the number of operators $B_j$ which operate on $y_k$.
Note that unlike $n_k$, where we assume that $n_k \geq 2$, $m_k$ can equal zero.
Let $t(j,k)$ give the index of $j$ in $\mathcal{J}_k$.
Let $\By = \bigoplus_{k \in \bigcup_{j=1}^m \mathcal{K}^B_j} \GG_k^{m_k}$, so that $\GG_k$ is omitted if $m_k = 0$.
Let $\yb \in \By$ be a lifted decision variable for $y$ grouped by subvector, with $\ybk_k \in \GG_k^{m_k}$ and $\ybk_{kt} \in \GG_k$ giving the $t$-th (sub-)subvector in $\ybk_k$ for $t \in \{1, \dots, m_k\}$ whenever $m_k \ne 0$.
\subsubsection*{Matrix Parameters}
We now introduce the matrix parameters which serve as the building blocks for our linear operators.
For each $k \in \mathcal{K}$, we choose $Z_k, W_k \in \Sp^{n_k}$, $Q_k \in \R^{n_k \times m_k}$, and $K_k \in \R^{m_k \times n_k}$.
With
\begin{equation}
\label{cab:beta_k}
\beta_k = (\beta_j)_{j \in \mathcal{J}_k},
\end{equation}
we impose the following requirements on each $Z_k$, $W_k$, $K_k$, and $Q_k$, and their derived matrices $D_k$, $L_k$, $M_k$, and $U_k$:

\noindent %
\begin{minipage}[ht!]{0.57\textwidth}
\begin{subequations}
    \label{cab:afb_defs}
\begin{align}
    D_k &= \diag(Z_k) \label{cab:D_def}\\
    L_k &= -\text{tri}(Z_k) \label{cab:L_def}\\
    W_k &= M_k^\intercal M_k \label{cab:W_def}\\
    U_k &= (Q_k^\intercal - K_k)^\intercal  \diag(\beta_k)^{-1} (Q_k^\intercal - K_k) \label{cab:U_def}
\end{align} 
\end{subequations}
\end{minipage}%
\hfill %
\begin{minipage}[ht!]{0.43\textwidth}
\begin{subequations}
    \label{cab:afb_reqs}
\begin{align}
    &Z_k \succeq W_k \label{cab:Z_succeq_W}\\
    &Z_k \succeq U_k \label{cab:Z_succeq_U} \\
    &Z_k \1_{n_k} = 0 \label{cab:Z_sum}\\ 
    &\nullspace(W) = \text{span}(\1_{n_k})\label{cab:W_connect}\\
    &Q_k^\intercal \1_{n_k} = \1_{m_k} \label{cab:Q_sum}\\
    &K_k \1_{n_k} = \1_{m_k} \label{cab:K_sum}
\end{align} 
\end{subequations}
\end{minipage}

\noindent The conditions in \eqref{cab:afb_reqs} imply that 
\begin{subequations}
\begin{align}
    D_k - L_k - L_k^\intercal &= Z_k\label{cab:Z_def} \\
    \1_{n_k}^\intercal (D_k - 2L_k)\1_{n_k} &= 0 \label{cab:DL_sum}\\
    U\1_{n_k} &= 0 \\
    \nullspace(Z_k) &= \text{span}(\1_{n_k})\label{cab:Z_connect}\\
    \nullspace(M_k) &= \text{span}(\1_{n_k})\label{cab:M_connect}.
\end{align}
\end{subequations}
If $m_k = 0$, we let $U_k = 0$, $Q_k = 0$, and $K_k = 0$.
Note that the requirement for $Z_k \succeq W_k \succeq 0$ and $\nullspace(W_k) = \text{span}(\1)$ in \eqref{cab:afb_reqs} imply that $D_k \succ 0$ and is therefore invertible.
We choose $M_k \in \R^{n_k - 1 \times n_k}$ such that $ M_k^\intercal M_k = W_k$.
One option for doing so is via Cholesky decomposition; this method and others are described in \cite{bassett2024optimaldesignresolventsplitting}.

We also require an additional condition to ensure that each cocoercive operator $B_j$ receives all of its required input prior to the last opportunity for it to provide an output for each of its subvectors.
Let $\mathcal{I}^{(1)}_k$ give the first (lowest) entry in $\mathcal{I}_k$ and $\mathcal{I}^{(n_k)}_k$ give the last (highest) entry in $\mathcal{I}_k$.
These correspond with the first and last operators to use $y_k$.
For $j$ in $\mathcal{J}$, let
\begin{subequations}
    \label{cab:op_cutoff_defs}
\begin{align}
    \bar{i}_j &= \max_{k \in \mathcal{K}^B_j} \mathcal{I}^{(1)}_k \\
    \ubar{i}_j &= \min_{k \in \mathcal{K}^B_j} \mathcal{I}^{(n_k)}_k. 
\end{align}
\end{subequations}
so that $\bar{i}_j$ gives the index of the first operator at which each $k \in \mathcal{K}^B_j$ is available and $\ubar{i}_j$ gives the index of the earliest final operator for some $k \in \mathcal{K}^B_j$.
This means that $B_j$ will not have all of its required input until the resolvent on $A_{\bar{i}_j}$ at the earliest, and must begin providing output no later than the resolvent on $A_{\ubar{i}_j}$.
For all $j \in \mathcal{J}$, we assume that $\bar{i}_j < \ubar{i}_j$, and select a cutoff operator index $i^*_j$ such that $\bar{i}_j \leq i^*_j < \ubar{i}_j$.
The resolvent on cutoff operator $A_{i^*_j}$ will be the last permissible source of inputs to cocoercive operator $B_j$.
For each $j \in \mathcal{J}$ and $k \in \mathcal{K}^B_j$, let $s^k_j$ be the index of the last element in $\mathcal{I}_k$ which is less than or equal to $i^*_j$.
We then require for each $k \in \mathcal{K}$, and each $j \in \mathcal{J}_k$, that 
\begin{subequations}
    \label{cab:cutoff_assumptions}
\begin{align}
        (K_k)_{t(j,k), s} &= 0 \quad &&\forall s > s^k_j \\
        (Q_k)_{s, t(j,k)} &= 0 \quad &&\forall s \leq s^k_j.
\end{align}
\end{subequations}
This ensures that, for each $k \in \mathcal{K}^B_j$, operator $B_j$ receives all of its input from the resolvents on some subset of operators $A_1$ through $A_{i^*_j}$, and provides its output to the resolvents of some subset of operators $A_{i^*_j + 1}$ through $A_n$.

For problem \eqref{cab:zero_in_coupled_sum} with $p=1$ and $R^A_i = R^B_j = \I$ and some valid choice of cutoff operator $i^*_j$ for each $j \in \mathcal{J}$, these matrices define a valid splitting algorithm given by Algorithm \ref{cab:afb_d_alg}, which is restatement (with some changes in the matrix and operator scaling) of the algorithms presented in \cite{aakerman2025splitting} and \cite{dao2025general}, and is a special case of CABRA. 

Our next step is to lift the matrices $Z_k$, $W_k$, $Q_k$, and $K_k$, and the matrices $D_k$, $L_k$, $M_k$, and $U_k$ derived from them, to operate on a lifted vector.
We do so by constructing the bounded linear operator $\v{Z}_k: \GG_k^{n_k} \to \GG_k^{n_k}$ as $\v{Z}_k = Z_k \otimes \I$, where $\I$ is the identity operator on $\GG_k$.
Lifting the other operators in a similar fashion, we have the following collection of lifted operators:
\begin{equation}
    \label{cab:cabra_mats}
\begin{aligned}
        \v{Z}_k &: \GG_k^{n_k} \to \GG_k^{n_k} \quad&\text{and} &&
        \v{D}_k &: \GG_k^{n_k} \to \GG_k^{n_k} \\ 
        \v{Q}_k &: \GG_k^{m_k} \to \GG_k^{n_k} \quad& &&
        \v{L}_k &: \GG_k^{n_k} \to \GG_k^{n_k} \\
        \v{K}_k &: \GG_k^{n_k} \to \GG_k^{m_k} \quad& &&
        \v{U}_k &: \GG_k^{n_k} \to \GG_k^{n_k} \\
        \v{W}_k &: \GG_k^{n_k} \to \GG_k^{n_k} \quad& & &
        \v{M}_k &: \GG_k^{n_k} \to \GG_k^{n_k - 1} .
    \end{aligned}
\end{equation}
Finally, we define the following lifted operators on $\Hy$ as the direct sum of their corresponding operators in \eqref{cab:cabra_mats}:
\begin{subequations}
\begin{align}
    \v{Z} &: \Hy \to \Hy & \v{Z}\v{y} &= (\v{Z}_k \v{y}_k)_{k \in \mathcal{K}}\\
    \v{D} &: \Hy \to \Hy & \v{D}\v{y} &= (\v{D}_k \v{y}_k)_{k \in \mathcal{K}}\\
    \v{L} &: \Hy \to \Hy & \v{L}\v{y} &= (\v{L}_k \v{y}_k)_{k \in \mathcal{K}}\\
    \v{W} &: \Hy \to \Hy & \v{W}\v{y} &= (\v{W}_k \v{y}_k)_{k \in \mathcal{K}}\\
    \v{M} &: \Hy \to \Hz & \v{M}\v{y} &= (\v{M}_k \v{y}_k)_{k \in \mathcal{K}}\\
    \v{U} &: \Hy \to \Hy & \v{U}\v{y} &= (\v{U}_k \v{y}_k)_{k \in \mathcal{K}}\\
    \v{K} &: \Hy \to \By & \v{K}\v{y} &= (\v{K}_k \v{y}_k)_{k \in \bigcup_{j=1}^m\mathcal{K}^B_j}\\
    \v{Q} &: \By \to \Hy & \v{Q}\yb &= (\v{Q}_k \ybk_k)_{k \in \mathcal{K}}.
\end{align}
\end{subequations}
This means that the subvector of $\v{W}\v{y}$ corresponding with the $y_k$ input to $A_{i}$ is given by $(\v{W}\v{y})_{k, s(i,k)} = \sum_{i' \in \mathcal{I}_k} (W_k)_{s(i,k), s(i',k)} \v{y}_{k, s(i',k)}$.
\subsubsection*{Permutation Operators}
We now define permutation operators which relate $\Hx$ to $\Hy$ and $\Bx$ to $\By$.
By definition, $\Hx = \bigoplus_{i=1}^n \HH_i = \bigoplus_{i=1}^n \left(\bigoplus_{k \in \mathcal{K}^A_i} \GG_k\right)$.
The product space $\Hx$ therefore has $n_k$ copies of $\GG_k$ for each $k \in \mathcal{K}$, just like $\Hy$, but grouped by operator, and therefore permuted by their appearance as arguments to operators $A_i$.
For each $i \in \mathcal{I}$, we define the lifted operator $\PAi_i: \Hy \to \HH_i$ as $\PAi_i\v{y} = (\v{y}_{k,s(i,k)})_{k \in \mathcal{K}^A_i}$, so that $\PAi_i$ selects from $\v{y}$ the subvectors which are inputs to operator $A_i$.
The product space $\By$ also contains a permutation of the spaces $\GG_k$ in $\Bx$, which also has $m_k$ copies of $\GG_k$ for each $k \in \mathcal{K}$, with ordering determined by their order of appearance as arguments to operators $B_j$.
For each $j \in \mathcal{J}$, we define the lifted operator $\PBj_j: \By \to \BB_j$ as $\PBj_j \yb = (\ybk_{k,t(j,k)})_{k \in \mathcal{K}^B_j}$, so that $\PBj_j$ selects from $\yb$ the subvectors which are inputs to operator $B_j$.
We then define the following lifted operators on $\Hy$ and $By$ as the direct sum of their corresponding operators:
\begin{subequations}
    \begin{align}
    \PA &:  \Hy \to \Hx & \PA \v{y} &= (\PAi_i \v{y})_{i \in \mathcal{I}}\\
    \PB &:  \By \to \Bx & \PB \yb &= (\PBj_j \yb)_{j \in \mathcal{J}}.
    \end{align}
\end{subequations}
We note that, as permutation operators, the adjoints of $\PA$ and $\PB$, $\PAs : \Hx \to \Hy$ and $\PBs: \Bx \to \By$, are also the inverses of $\PA$ and $\PB$, respectively.
We define $\mathcal{N}_A$ as the range of $\RA$ and $\mathcal{N}_A^\perp$ as its orthogonal complement, noting that
\begin{subequations}
\begin{align}
    \mathcal{N}_A &= \{\v{x} \in \Hx: \exists y \in \Gy \,\text{where}\, \v{x} = \RA y\} \\ 
    \mathcal{N}^\perp_A &= \{\v{x}^\perp \in \Hx: \langle \v{x}^\perp, \v{x} \rangle = 0 \quad \forall \v{x} \in \mathcal{N}_A\} .
\end{align}
\end{subequations}
Therefore, if $\v{x} \in \mathcal{N}_A$ and $\v{y} = \PAs\v{x}$, we have $\v{y}_k = \1_{n_k} \otimes y_k$ for some $y_k \in \GG_k$.
Likewise, for $\v{x}^\perp \in \mathcal{N}^\perp_A$ and $\v{y}^\perp = \PAs\v{x}^\perp$, we have 
\begin{subequations}
    \label{cab:Nperp}
\begin{align}
    \langle  \1_{n_k}, \v{y}^\perp_k \rangle &= 0 \quad \forall k \in \mathcal{K} \\
    \RAs \v{x}^\perp = 0.
\end{align}
\end{subequations}
For $y \in \Gy$, we define $\v{y}_{\1} = (\1_{n_k} \otimes y_k)_{k \in \mathcal{K}}$ and $\ybk_{\1} = (\1_{m_k} \otimes y_k)_{k \in \bigcup_{j=1}^m\mathcal{K}^B_j}$.
For $y$, $\v{y}_{\1}$, $\ybk_{\1}$, $\v{x} \in \Hx$, and $\xb \in \Bx$, we therefore have the following relationship between $\PA$ and $\RA$, and $\PB$ and $\RB$:
\begin{subequations}
    \label{cab:PtoR}
    \begin{align}
    \PA \v{y}_{\1} &= \RA y\\
    \PB \ybk_{\1} &= \RB y\\ %
    \langle \1_{n_k}, (\PAs \v{x})_k \rangle &= (\RAs \v{x})_k \\
    \langle \1_{m_k}, (\PBs \xb)_k \rangle &= (\RBs \xb)_k .
    \end{align}
\end{subequations}
Given the requirement for $K_k \1_{n_k} = \1_{m_k}$ and $Q_k^\intercal \1_{n_k} = \1_{m_k}$ in \eqref{cab:K_sum} and \eqref{cab:Q_sum}, we also have
\begin{subequations}
    \label{cab:PQKtoR}
    \begin{align}
    \ybk_{\1} = \v{K} \v{y}_{\1} &= \v{K} \PAs \RA y\\
    \PB \v{K} \PAs \RA y &= \RB y \label{cab:PBKPAx}\\
    \langle \1_{n_k}, (\v{Q} \PBs \xb)_k\rangle &= (\RBs \xb)_k.
    \end{align}
\end{subequations}
For ease of notation, we define the following compositions:
\begin{subequations}
    \label{cab:cabra_reqs}
\begin{align}
    \v{Z}_A &: \Hx \to \Hx & \v{Z}_A \v{x} &= \PA \v{Z} \PAs \v{x}\\
    \v{D}_A^{-1} &: \Hx \to \Hx & \v{D}_A^{-1}\v{x} &= \PA \v{D}^{-1} \PAs \v{x} \\
    \v{L}_A &: \Hx \to \Hx & \v{L}_A \v{x}  &= \PA \v{L} \PAs\v{x} \\
    \v{K}_P &: \Hy \to \Bx & \v{K}_P\v{y} &= \PB \v{K}\v{y} \\
    \v{Q}_P &: \Bx \to \Hy & \v{Q}_P \xb&= \v{Q} \PBs\xb \\
    \v{K}_A &: \Hx \to \Bx & \v{K}_A \v{x} &= \v{K}_P \PAs\v{x} \\
    \v{Q}_A &: \Bx \to \Hx & \v{Q}_A \xb &= \PA \v{Q}_P\xb \\
    \v{U}_A &: \Hx \to \Hx & \v{U}_A \v{x} &= \PA \v{U} \PAs \v{x}\\
    \v{W}_A &: \Hx \to \Hx & \v{W}_A \v{x}  &= \PA \v{W} \PAs \v{x}\\
    \v{M}_A &: \Hx \to \Hz & \v{M}_A \v{x}  &= \v{M} \PAs \v{x}.
\end{align}
\end{subequations}
The definition of $\PA$ and $\mathcal{N}_A$, and the requirement for the null space of $W_k$ and $Z_k$ to be the ones vector, together imply that 
\begin{subequations}
    \label{cab:lemmaclaims}
\begin{align}
    \nullspace(\v{M}_A) &= \nullspace{(\v{W}_A)} = \nullspace{(\v{Z}_A)} = \mathcal{N}_A  \\
    \range(\MT_A) &= \mathcal{N}_A^\perp \\
    (\v{D}_A - 2\v{L}_A)\v{x} &\in \mathcal{N}_A^\perp \quad \forall \v{x} \in \mathcal{N}_A. \label{cab:DLA}
\end{align}
\end{subequations}
Proofs of the claims in \eqref{cab:lemmaclaims} are provided in Lemmas \ref{cab:Lem:nullspace} and \ref{cab:Lemma:DL1_W} in the appendix.

\subsection*{Algorithm Definition}
\begin{algorithm} 
    \caption{CABRA: Coupled Adaptable Backward-Forward-Backward Resolvent Splitting Algorithm}\label{cab:cabra_d_alg} 
    \begin{algorithmic}[1] 
    \Require $\alpha \in (0, 4)$; $\gamma \in (0, 2 - \frac{\alpha}{2})$; $\v{z}^0 \in \bigoplus_{k \in \mathcal{K}}\GG_k^{n_k-1}$; $\v{D}_A^{-1}$, $\v{M}_A$, $\v{L}_A$, $\v{Q}_A$, $\v{K}_A$ satisfying \eqref{cab:cabra_reqs}.
    \State $\nu \gets 0$
    \Repeat
      \State $\v{x}^\nu = J_{\alpha \v{D}_A^{-1}\v{A}}\left(\v{D}_A^{-1}\left[-\MT_A \v{z}^\nu + 2\v{L}_A \v{x}^\nu - \alpha \v{Q}_A\v{B}\v{K}_A\v{x}^\nu\right]\right)$ \label{cab:cabra_d_itr1}
      \State $\v{z}^{\nu+1} = \v{z}^\nu + \gamma \v{M}_A\v{x}^\nu$ \label{cab:cabra_d_itr2}
      \State $\nu \gets \nu+1$
    \Until{convergence}
\end{algorithmic}
\end{algorithm}

We now introduce the fixed point iteration which defines CABRA.
Let the operators $T: \Hz \to \Hz$ and $S: \Hz \to \Hx$ be given by
\begin{subequations}
    \label{cab:cabra_operators}
\begin{align}
    S(\v{z}) &= \left(\v{D}_A - 2\v{L}_A + \alpha \v{A} + \alpha \v{Q}_A \v{B} \v{K}_A\right)^{-1}(- \MT_A \v{z}) \\
    T(\v{z}) &= \left(\I + \gamma \v{M}_A S\right)(\v{z}).
\end{align}
\end{subequations}
This is equivalent to $T(\v{z}) = \v{z} + \gamma \v{M}_A\v{x}$ where $\v{x}$ is defined as in Algorithm \ref{cab:cabra_d_alg} step \eqref{cab:cabra_d_itr1}.
We show in Lemma \ref{cab:Lem:LA_tri} in the appendix that the permutation $\PA$ preserves the ordering of the operators within each $L_k$, making $\v{L}_A$ strictly block lower triangular so that $(\v{L}_A\v{x})_i$ depends only on $(x_1, \dots, x_{i-1})$.
We also show, in Lemma \ref{cab:Lemma:QBK_lower_triangular_coupled} in the appendix, that $(\v{Q}_A\v{B}\v{K}_A\v{x})_i$ also depends only on $(x_1, \dots, x_{i-1})$ due to the structure imposed on $Q_k$ and $K_k$ by \eqref{cab:cutoff_assumptions}.
Therefore $\v{x}$ in \eqref{cab:cabra_d_itr1} is well-defined, and can be found by iteratively finding $J_{\alpha (\v{D}_A^{-1})_i A_i}$ via forward substitution, working in parallel when any set of resolvents have all the input required by $\v{L}_A$ and $\v{Q}_A\v{B}\v{K}_A$.

\subsection{Proof of convergence}
We now proceed to prove the convergence of CABRA. 
We begin by showing a correspondence between fixed points of $T$ and zeros of problem \eqref{cab:zero_in_coupled_sum}.
We then establish a result linking the operator $\v{Q}_P\v{B}\v{K}_P$ to $\v{U}$ (and therefore to $\v{Z}$ and $\v{W}$), which will be necessary for establishing the nonexpansivity of $T$.
This connection allows us to then show the averaged nonexpansivity of $T$.
Given this averaged nonexpansivity, we then proceed to show that the existence of a zero is sufficient for weak convergence of the iterates $(\v{z}^\nu)$, and that the corresponding series $(\v{x}^\nu)$ weakly converges to a point $\v{x}^* \in \mathcal{N}_A$ which corresponds with a solution to \eqref{cab:zero_in_coupled_sum}.

Our first lemma establishes a correspondence between the zeros of problem \eqref{cab:zero_in_coupled_sum} and the fixed points of $T$.
\begin{lemma}
    \label{cab:Lemma:fixed_point_corresponds_coupled}
    The set of fixed points of $T$ is non-empty if and only if the set of solutions to problem \eqref{cab:zero_in_coupled_sum} is non-empty.
    That is, 
    \begin{equation} 
        y \in \zer{\RAs\v{A}\RA  + \RBs \v{B}\RB }  \iff \v{z} \in \mathrm{Fix}(T), \RA y = S \v{z}.
    \end{equation}
\end{lemma}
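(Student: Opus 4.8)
The plan is to reduce the fixed-point condition on $T$ to a statement about the subspaces $\mathcal{N}_A$ and $\mathcal{N}_A^\perp$, and then use the permutation identities \eqref{cab:PtoR}--\eqref{cab:PQKtoR} (which in turn encode the row/column-sum requirements \eqref{cab:K_sum}--\eqref{cab:Q_sum}) to pass between the lifted space $\Hx$ and the original space $\Gy$. First I would record two reformulations. Since $T(\v{z}) = \v{z} + \gamma \v{M}_A S(\v{z})$ with $\gamma \neq 0$, we have $\v{z} \in \mathrm{Fix}(T)$ if and only if $\v{M}_A S(\v{z}) = 0$, i.e.\ $\v{x} := S(\v{z}) \in \nullspace(\v{M}_A) = \mathcal{N}_A$. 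Second, unwinding the resolvent in the definition of $S$ (using that $\v{L}_A$ and $\v{Q}_A\v{B}\v{K}_A$ are strictly block lower triangular by Lemmas \ref{cab:Lem:LA_tri} and \ref{cab:Lemma:QBK_lower_triangular_coupled}, so $S$ is single-valued and well-defined), the relation $\v{x} = S(\v{z})$ is equivalent to the existence of $\v{a} \in \v{A}\v{x}$ with
\[
    (\v{D}_A - 2\v{L}_A)\v{x} + \alpha \v{a} + \alpha \v{Q}_A\v{B}\v{K}_A\v{x} = -\MT_A \v{z}.
\]
I would also note that $\mathcal{N}_A^\perp = \nullspace(\RAs)$: the inclusion $\mathcal{N}_A^\perp \subseteq \nullspace(\RAs)$ is \eqref{cab:Nperp}, and conversely if $\RAs\v{w} = 0$ then $\langle \v{w}, \RA y\rangle = \langle \RAs\v{w}, y\rangle = 0$ for all $y$, so $\v{w} \perp \mathcal{N}_A$.

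For the direction $\Leftarrow$, suppose $\v{z} \in \mathrm{Fix}(T)$, put $\v{x} = S(\v{z}) \in \mathcal{N}_A$, and write $\v{x} = \RA y$ for some $y \in \Gy$. Applying $\RAs$ to the displayed identity kills $\MT_A\v{z}$ (since $\range(\MT_A) = \mathcal{N}_A^\perp = \nullspace(\RAs)$) and $(\v{D}_A - 2\v{L}_A)\v{x}$ (since this lies in $\mathcal{N}_A^\perp$ by \eqref{cab:DLA}), leaving $\RAs\v{a} + \RAs\v{Q}_A\v{B}\v{K}_A\v{x} = 0$. The key computation is $\v{K}_A\v{x} = \v{K}_P\PAs\RA y = \PB\v{K}\PAs\RA y = \RB y$ by \eqref{cab:PBKPAx}, hence $\v{B}\v{K}_A\v{x} = \v{B}\RB y$, and then $\RAs\v{Q}_A(\v{B}\RB y) = \RBs\v{B}\RB y$ by the chain $\v{Q}_A = \PA\v{Q}\PBs$ together with the component identity $\langle \1_{n_k}, (\v{Q}\PBs\xb)_k\rangle = (\RBs\xb)_k$ from \eqref{cab:PQKtoR} and $\langle \1_{n_k}, (\PAs\v{w})_k\rangle = (\RAs\v{w})_k$ from \eqref{cab:PtoR}. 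Since $\RAs\v{a} \in \RAs\v{A}\RA y$, we conclude $0 \in \RAs\v{A}\RA y + \RBs\v{B}\RB y$.

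For the direction $\Rightarrow$, suppose $y$ solves \eqref{cab:zero_in_coupled_sum}, so there is $\v{a} \in \v{A}\RA y$ with $\RAs\v{a} + \RBs\v{B}\RB y = 0$. Set $\v{x} = \RA y \in \mathcal{N}_A$ and $\v{w} := (\v{D}_A - 2\v{L}_A)\v{x} + \alpha\v{a} + \alpha\v{Q}_A\v{B}\v{K}_A\v{x}$. Running the same identities forward gives $\RAs\v{w} = \alpha(\RAs\v{a} + \RBs\v{B}\RB y) = 0$, so $\v{w} \in \nullspace(\RAs) = \mathcal{N}_A^\perp = \range(\MT_A)$, and hence $\v{w} = -\MT_A\v{z}$ for some $\v{z} \in \Hz$. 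By the reformulation of $S$ above, $S(\v{z}) = \v{x}$, and $\v{M}_A S(\v{z}) = \v{M}_A\v{x} = 0$ because $\v{x} \in \mathcal{N}_A = \nullspace(\v{M}_A)$; therefore $\v{z} \in \mathrm{Fix}(T)$ with $\RA y = S\v{z}$, as claimed.

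I expect the main obstacle to be the bookkeeping that identifies $\RAs\v{Q}_A\v{B}\v{K}_A\v{x}$ with $\RBs\v{B}\RB y$ when $\v{x} = \RA y$: this requires carefully threading the composition definitions $\v{K}_A = \v{K}_P\PAs$, $\v{Q}_A = \PA\v{Q}_P$, $\v{K}_P = \PB\v{K}$, $\v{Q}_P = \v{Q}\PBs$ through $\v{x}$ and invoking the summation properties \eqref{cab:K_sum}--\eqref{cab:Q_sum} (packaged as \eqref{cab:PtoR}--\eqref{cab:PQKtoR}). Once that equality and the chain of identifications $\mathcal{N}_A^\perp = \nullspace(\RAs) = \range(\MT_A)$ and $(\v{D}_A - 2\v{L}_A)\mathcal{N}_A \subseteq \mathcal{N}_A^\perp$ are in hand, both directions are short.
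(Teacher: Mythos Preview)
Your proof is correct and follows essentially the same approach as the paper: both directions hinge on identifying $\mathcal{N}_A^\perp$ with $\range(\MT_A)$, placing $(\v{D}_A-2\v{L}_A)\v{x}$ in $\mathcal{N}_A^\perp$ for $\v{x}\in\mathcal{N}_A$, and using the identities \eqref{cab:PtoR}--\eqref{cab:PQKtoR} to convert $\RAs\v{Q}_A\v{B}\v{K}_A\v{x}$ into $\RBs\v{B}\RB y$. The only cosmetic difference is that the paper carries out these identifications componentwise (summing against $\1_{n_k}$ for each $k$), whereas you package the same computation into the global equality $\mathcal{N}_A^\perp=\nullspace(\RAs)$ and apply $\RAs$ once; the content is identical.
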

\begin{proof}
    If $y \in \zer{\RAs\v{A}\RA + \RBs \v{B}\RB}$, then there exists a corresponding lifted $\v{y}_{\1} \in \Hy$, $\v{x}, \v{w} \in \Hx$, and $\xb, \v{u} \in \Bx$ such that 
    \begin{subequations}
    \begin{align}
        \v{x} &= \RA y = \PA\v{y}_{\1}  \\
        \xb &= \RB y = \PB \v{K} \v{y}_{\1} = \v{K}_A \v{x} \label{cab:lem_k_sum}\\
        \v{w} &\in \v{A}\v{x}\\ %
        \v{u} &= \v{B}\xb = \v{B}\v{K}_A \v{x}\\ %
        0 &= \RAs \v{w} + \RBs \v{u} \label{cab:RAswRBsu}.
    \end{align}
    \end{subequations}
    Equation \eqref{cab:lem_k_sum} follows from assumption \eqref{cab:K_sum}. 
    Looking at the individual subvectors of \eqref{cab:RAswRBsu}, for each $k \in \mathcal{K}$ we have 
    \begin{subequations}
    \begin{align}
        0 &= \left(\RAs \v{w} + \RBs \v{u}\right)_k  \\
        &= \langle \1_{n_k}, (\PAs \v{w})_k\rangle + \langle \1_{m_k}, (\PBs \v{u})_k \rangle \\
        &= \langle \1_{n_k}, (\PAs \v{w})_k \rangle+ \langle \1_{n_k}, (\v{Q} \PBs \v{u})_k \rangle \label{cab:lem_q_sum} \\
        &= \langle \1_{n_k}, (\PAs \v{w}  + \v{Q}_P \v{u})_k \rangle \\
        &= \langle \1_{n_k}, (\PAs \v{w} + \PAs \PA \v{Q}_P \v{u})_k \rangle \\
        &= \langle \1_{n_k}, \left(\PAs \left[\v{w} + \v{Q}_A \v{u}\right]\right)_k \rangle,
    \end{align}
    \end{subequations}
    where \eqref{cab:lem_q_sum} follows by $Q_k^\intercal \1_{n_k} = \1_{m_k}$.
    Since for each $k$, $(\v{y}_{\1})_k = \1_{n_k} \otimes y_k$, we also know that $\langle \1_{n_k}, (\v{D}_k - 2\v{L}_k)(\v{y}_{\1})_k \rangle = 0$ by \eqref{cab:DLA}.
    Therefore, for all $k \in \mathcal{K}$, we have 
    \begin{subequations}
    \begin{align*}
    \langle \1_{n_k}, \left(\alpha \PAs \left[\v{w} + \v{Q}_A\v{u}\right]  + (\v{D} - 2\v{L})\v{y}_{\1}\right)_k \rangle &= 0  \\
    \langle \1_{n_k}, \left(\PAs \left[\alpha \v{w} + \alpha \v{Q}_A\v{u} + (\v{D}_A - 2\v{L}_A)\v{x}\right]\right)_k \rangle &= 0 
    \end{align*}
    \end{subequations}
    and
    \begin{equation*}        
    \alpha \v{w} + \alpha \v{Q}_A \v{u}  + (\v{D}_A - 2\v{L}_A) \v{x} \in \mathcal{N}_A^\perp .
    \end{equation*}
    We also know that $\range(\MT_A) = \mathcal{N}_A^\perp$.
    Therefore there exists some $\v{z} \in \Hz$ such that 
    \[
        -\MT_A \v{z} = \alpha \v{w} + \alpha \v{Q}_A \v{u}  + (\v{D}_A - 2\v{L}_A) \v{x}. \]
    Therefore
    \[    \alpha \v{A} \v{x} \ni -\MT_A \v{z} - \alpha \v{Q}_A \v{B}\v{K}_A\v{x} + (2\v{L}_A - \v{D}_A) \v{x} , \] and
    \[    \v{x} + \alpha \v{D}_A^{-1} \v{A}\v{x} \ni \v{D}_A^{-1} \left(-\MT_A \v{z} - \alpha \v{Q}_A \v{B}\v{K}_A\v{x} + 2\v{L}_A \v{x}\right).\]
    This means that $\v{x}$ satisfies 
    \begin{equation*}
        \v{x} = J_{\alpha \v{D}_A^{-1}\v{A}} \left(\v{D}_A^{-1}\left[-\MT_A \v{z} - \alpha \v{Q}_A \v{B}\v{K}_A\v{x} + 2\v{L}_A \v{x}\right]\right),
    \end{equation*}
    and by the uniqueness of the resolvent of a maximal monotone operator, we have $\v{x} = S(\v{z})$.
    Since $\v{x} \in \mathcal{N}_A$ and $\nullspace(\v{M}_A) = \mathcal{N}_A$, we have
    \begin{align*}
    T(\v{z}) &= \v{z} + \gamma \v{M}_A \v{x} = \v{z}\\
    \implies \v{z} &\in \mathrm{Fix}(T)
    \end{align*}
    proving the forward implication.

    If $\v{z} \in \mathrm{Fix}(T)$ with $\v{x} = S \v{z}$, we know that $\v{x} \in \mathcal{N}_A$. 
    Therefore there exists $y \in \bigoplus_{k \in \mathcal{K}}\GG_k$ such that $\v{x} = \RA y$, and lifting $y$ to $\v{y}_{\1}$, we have $\v{x} = \PA \v{y}_{\1}$.
    Let $\v{u} = \v{B} \RB y$.    
    We know by \eqref{cab:PBKPAx} that $\v{K}_A \v{x} = \RB y$, so we also have $\v{u} = \v{B} \v{K}_A \v{x} $.
    By the definition of $S$, we also know that 
    \begin{subequations}
    \begin{align*}
        \alpha \v{A} \v{x} &\ni -\MT_A \v{z} - \alpha \v{Q}_A \v{B}\v{K}_A\v{x} + (2\v{L}_A - \v{D}_A) \v{x}  \\
        \alpha \left(\v{A} \v{x} + \v{Q}_A \v{u}\right)&\ni -\MT_A \v{z} + (2\v{L}_A - \v{D}_A) \v{x}  
    \end{align*}
    \end{subequations}
    Since, by Lemma \ref{cab:Lem:nullspace} and \ref{cab:Lemma:DL1_W}, the range of $\MT_A$ and $(2\v{L}_A - \v{D}_A) \v{x}$ is $\mathcal{N}_A^\perp$, we know that there also exists $\v{w} \in \v{A} \v{x}$ such that
    \begin{equation*}
        \alpha (\v{w} + \v{Q}_A \v{u}) \in \mathcal{N}_A^\perp.
    \end{equation*}
    Therefore by \eqref{cab:Nperp}, \eqref{cab:PtoR}, and \eqref{cab:PQKtoR}, for all $k \in \mathcal{K}$, we have
    \begin{subequations}
    \begin{align*}
        \langle \1_{n_k}, \left(\alpha\PAs \left[\v{w} + \v{Q}_A \v{u}\right]\right)_k \rangle &= 0\\
        \langle \1_{n_k}, \left(\PAs \v{w}\right)_k \rangle + \langle \1_{n_k}, \left(\v{Q}\PBs \v{u}\right)_k\rangle &= 0 \\
        (\RAs \v{w})_k + \langle \1_{m_k}, (\PBs \v{u})_k \rangle &= 0 \\
        (\RAs \v{w})_k + (\RBs \v{u})_k &= 0 ,
    \end{align*}        
    \end{subequations}
    so that
    \begin{subequations}
    \begin{align*}
        \RAs \v{w} + \RBs \v{u} &= 0\\
        \RAs \v{A}\RA y + \RBs \v{B} \RB y &\ni 0\\
        \zer{\RAs\v{A}\RA + \RBs \v{B}\RB} &\ni y 
    \end{align*}        
    \end{subequations}
    concluding the proof.
\end{proof}

We next establish a result linking the operator $\v{Q}_P\v{B}\v{K}_P$ to $\v{U}$ (and therefore to $\v{W}$), which will be necessary for establishing the nonexpansivity of $T$.
\begin{lemma}
\label{cab:Lemma:PBK_coupled}
The operator $\v{Q}_P\v{B}\v{K}_P + \frac{1}{4}\v{U}$ is maximal monotone.
\end{lemma}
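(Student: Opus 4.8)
The plan is to reduce the claim to monotonicity: each $B_j$ is $\beta_j$-cocoercive, hence single-valued, Lipschitz and everywhere defined, so $\v{Q}_P\v{B}\v{K}_P$ is single-valued, Lipschitz and has full domain, and adding the bounded linear self-adjoint operator $\frac14\v{U}$ preserves all three properties; a monotone, continuous operator with full domain on a Hilbert space is maximal monotone \cite{bauschke_combettes}. So it suffices to show that $\v{Q}_P\v{B}\v{K}_P + \frac14\v{U}$ is monotone. The algebraic fact I would establish first is an adjoint identity for $\v{Q}_P$. Setting $N_k = Q_k^\intercal - K_k \in \R^{m_k \times n_k}$, $\v{N}_k = N_k \otimes \I$, and $\v{N} = \bigoplus_{k \in \mathcal{K}} \v{N}_k : \Hy \to \By$, the fact that $\v{Q}$ and $\v{K}$ are direct sums over $\mathcal{K}$ of $\v{Q}_k$ and $\v{K}_k$, together with the fact that $\v{Q}_k^*$ and $\v{K}_k$ lift $Q_k^\intercal$ and $K_k$, gives $\v{Q}^* = \v{K} + \v{N}$ as maps $\Hy \to \By$. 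Since $\PB$ is a permutation operator we have $(\PBs)^* = \PB$, and therefore $\v{Q}_P^* = (\v{Q}\PBs)^* = \PB \v{Q}^* = \v{K}_P + \PB\v{N}$.

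With this in hand, fix $\v{y}_1, \v{y}_2 \in \Hy$, put $\v{y} = \v{y}_1 - \v{y}_2$, and let $\v{v} = \v{B}\v{K}_P\v{y}_1 - \v{B}\v{K}_P\v{y}_2 \in \Bx$. Using the adjoint identity and linearity of $\v{K}_P$,
\begin{equation*}
\langle \v{Q}_P\v{B}\v{K}_P\v{y}_1 - \v{Q}_P\v{B}\v{K}_P\v{y}_2, \v{y} \rangle = \langle \v{v}, \v{K}_P\v{y}_1 - \v{K}_P\v{y}_2 \rangle + \langle \v{v}, \PB\v{N}\v{y} \rangle .
\end{equation*}
Decompose both inner products over the blocks of $\Bx = \bigoplus_{j \in \mathcal{J}} \BB_j$ and write $\v{v}_j = B_j(\v{K}_P\v{y}_1)_j - B_j(\v{K}_P\v{y}_2)_j$. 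The $\beta_j$-cocoercivity of $B_j$ bounds the first sum below by $\sum_{j} \beta_j \norm{\v{v}_j}^2$, while Cauchy--Schwarz and the Young inequality $\norm{a}\norm{b} \le \beta_j \norm{a}^2 + \frac{1}{4\beta_j}\norm{b}^2$ bound the second sum below by $-\sum_j \beta_j \norm{\v{v}_j}^2 - \sum_j \frac{1}{4\beta_j}\norm{(\PB\v{N}\v{y})_j}^2$. The $\beta_j\norm{\v{v}_j}^2$ terms cancel, so
\begin{equation*}
\langle \v{Q}_P\v{B}\v{K}_P\v{y}_1 - \v{Q}_P\v{B}\v{K}_P\v{y}_2, \v{y} \rangle \ge -\sum_{j \in \mathcal{J}} \frac{1}{4\beta_j}\norm{(\PB\v{N}\v{y})_j}^2 .
\end{equation*}

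The final step is to recognize the right-hand side as $-\frac14 \langle \v{U}\v{y}, \v{y}\rangle$. Since $\PBj_j$ extracts the $t(j,k)$-th component of block $k$ for each $k \in \mathcal{K}^B_j$, we have $(\PB\v{N}\v{y})_j = \left((\v{N}_k\v{y}_k)_{t(j,k)}\right)_{k \in \mathcal{K}^B_j}$, so switching the order of summation ($k \in \mathcal{K}^B_j \iff j \in \mathcal{J}_k$) and using $\beta_j = (\beta_k)_{t(j,k)}$ from \eqref{cab:beta_k},
\begin{equation*}
\sum_{j \in \mathcal{J}} \frac{1}{\beta_j}\norm{(\PB\v{N}\v{y})_j}^2 = \sum_{k \in \mathcal{K}} \sum_{j \in \mathcal{J}_k} \frac{1}{\beta_j} \norm{(\v{N}_k \v{y}_k)_{t(j,k)}}^2 = \sum_{k \in \mathcal{K}} \langle \diag(\beta_k)^{-1} \v{N}_k \v{y}_k, \v{N}_k \v{y}_k \rangle = \langle \v{U}\v{y}, \v{y} \rangle ,
\end{equation*}
the last equality using $U_k = N_k^\intercal \diag(\beta_k)^{-1} N_k$ from \eqref{cab:U_def}. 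Adding $\frac14 \langle \v{U}\v{y}, \v{y}\rangle = \frac14\langle \v{U}\v{y}_1 - \v{U}\v{y}_2, \v{y}\rangle$ to the previous bound gives $\langle (\v{Q}_P\v{B}\v{K}_P + \frac14\v{U})\v{y}_1 - (\v{Q}_P\v{B}\v{K}_P + \frac14\v{U})\v{y}_2, \v{y}\rangle \ge 0$, which together with the reduction in the first paragraph proves the lemma.

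The main obstacle is bookkeeping rather than any deep estimate: cleanly deriving $\v{Q}_P^* = \v{K}_P + \PB\v{N}$ from the lifting and permutation definitions, and verifying the index matching in $\sum_j \frac{1}{\beta_j}\norm{(\PB\v{N}\v{y})_j}^2 = \langle \v{U}\v{y}, \v{y} \rangle$ --- in particular that the component index $t(j,k)$ and the ordering implicit in $\beta_k = (\beta_j)_{j \in \mathcal{J}_k}$ line up, and that the $m_k = 0$ blocks drop out harmlessly. Once these are set up, the cocoercivity-plus-Young argument is routine, as is the passage from monotonicity and continuity to maximal monotonicity.
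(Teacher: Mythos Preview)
Your proof is correct and follows essentially the same route as the paper: split $\v{Q}_P^*$ into $\v{K}_P$ plus the $\PB(\v{Q}^*-\v{K})$ piece, use cocoercivity on the first term, handle the cross term, identify the remainder with $\frac14\langle \v{U}\v{y},\v{y}\rangle$, and conclude maximality from continuity plus monotonicity. The only cosmetic difference is that the paper completes the square $\norm{\diag(\beta_{\1})^{1/2}\Delta\v{b} + \tfrac12\diag(\beta_{\1})^{-1/2}\PB(\v{Q}^*-\v{K})\Delta\v{y}}^2\ge 0$ rather than invoking Young's inequality on the cross term, but this is the same estimate.
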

\begin{proof}
    We begin by showing monotonicity.
    Let $\Delta \v{y} = \v{y}_1 - \v{y}_2$ and $\Delta \v{b} = \v{B}\v{K}_P \v{y}_1 - \v{B}\v{K}_P \v{y}_2$.
    By definition, we have
    \begin{align*}
        \left\langle \v{y}_1 - \v{y}_2, \v{Q}_P\v{B}\v{K}_P \v{y}_1 - \v{Q}_P\v{B}\v{K}_P \v{y}_2 \right\rangle =& \left\langle \Delta \v{y}, \v{Q}_P\Delta\v{b}\right\rangle \\
        =& \left\langle \Delta \v{y}, \v{Q}\PBs\Delta\v{b}\right\rangle \\
        =& \left\langle \PB\v{Q}^* \Delta \v{y}, \Delta\v{b}\right\rangle .
    \end{align*}
    Adding and subtracting $\v{K}_P$, we have
    \begin{equation*}
       \left\langle \PB\v{Q}^* \Delta \v{y}, \Delta\v{b}\right\rangle  = \left\langle \PB(\v{Q}^* - \v{K}) \Delta \v{y}, \Delta\v{b}\right\rangle + \left\langle \PB\v{K} \Delta \v{y}, \Delta\v{b}\right\rangle.%
    \end{equation*}
    Writing $(\v{K}_P \v{y}_1)_j$ as $\xb_{1j}$ and  $(\v{K}_P \v{y}_2)_j$ as $\xb_{2j}$, we know that for all $j$, by the cocoercivity of $B_j$, we have
    \begin{align*}
        \langle \xb_{1j} - \xb_{2j}, B_j \xb_{1j} - B_j \xb_{2j} \rangle &\geq \beta_j \norm{B_j \xb_{1j} - B_j \xb_{2j}}^2\\
        \langle \xb_{1j} - \xb_{2j}, B_j \xb_{1j} - B_j \xb_{2j}\rangle &\geq \langle \sqrt{\beta_j}(B_j \xb_{1j} - B_j \xb_{2j}), \sqrt{\beta_j}(B_j \xb_{1j} - B_j \xb_{2j}) \rangle.
    \end{align*}
    Define $\beta_{\1}: \Bx \to \Bx$ as $\beta_{\1} = (\beta_j \otimes \I_j)_{j \in \mathcal{J}}$ where $\I_j$ is the identity in $\BB_j$.
    We then have
    \begin{align*}
        \left\langle \v{K}_P \v{y}_1 - \v{K}_P \v{y}_2, \v{B} \v{K}_P \v{y}_1 - \v{B} \v{K}_P \v{y}_2\right\rangle &\geq \norm{\beta_{\1}^{\frac{1}{2}}(\v{B}\v{K}_P\v{y}_1 - \v{B}\v{K}_P \v{y}_2)}^2 \\
        \langle \v{K}_P \Delta \v{y}, \Delta\v{b}\rangle &\geq \norm{\beta_{\1}^{\frac{1}{2}}\Delta\v{b}}^2.
    \end{align*}
    Therefore
    \begin{equation*}
        \left\langle \Delta \v{y}, \v{Q}_P\Delta\v{b}\right\rangle \geq \left\langle \PB(\v{Q}^* - \v{K}) \Delta \v{y}, \Delta\v{b}\right\rangle + \norm{\beta_{\1}^{\frac{1}{2}}\Delta\v{b}}^2
    \end{equation*}
    and 
    \begin{align}
        \left\langle \Delta \v{y} , \v{Q}_P\Delta\v{b}\right\rangle + \frac{1}{4}\left\langle \Delta \v{y}, \v{U}\Delta\v{y}\right\rangle \geq \left\langle \PB (\v{Q}^* - \v{K}) \Delta \v{y}, \Delta\v{b}\right\rangle + \norm{\beta_{\1}^{\frac{1}{2}}\Delta\v{b}}^2 + \frac{1}{4}\left\langle \Delta \v{y}, \v{U}\Delta\v{y}\right\rangle .\label{cab:lemma2_expanded_coupled}
    \end{align}
    Using the definition of $\v{U}$ and the fact that $(\PBs \beta_{\1}^{-1} \PB)_k = \beta_k^{-1} \otimes \I_k$, where $\beta_k$ is as defined in \eqref{cab:beta_k} and $\I_d$ is the identity on $\Gk^{n_k}$, the right-hand side of \eqref{cab:lemma2_expanded_coupled} becomes
    \begin{align*}
        =&\left\langle \PB (\v{Q}^* - \v{K}) \Delta \v{y}, \Delta\v{b}\right\rangle + \norm{\beta_{\1}^{\frac{1}{2}}\Delta\v{b}}^2 + \norm{\frac{1}{2}\beta_{\1}^{-\frac{1}{2}}\PB (\v{Q}^* - \v{K})\Delta \v{y}}^2 \\
        =&\norm{\beta_{\1}^{\frac{1}{2}}\Delta \v{b} + \frac{1}{2}\beta_{\1}^{-\frac{1}{2}}\PB(\v{Q}^* - \v{K})\Delta \v{y}}^2 \geq 0.
    \end{align*}
    Therefore 
    \begin{equation*}
        \left\langle \Delta \v{y} , \v{Q}_P\Delta\v{b}\right\rangle + \frac{1}{4}\left\langle \Delta \v{y}, \v{U}\Delta\v{y}\right\rangle \geq 0
    \end{equation*}
    and $\v{Q}_P\v{B}\v{K}_P + \frac{1}{4}\v{U}$ is monotone.
    The boundedness and linearity of $\v{Q}_P$, $\v{K}_P$, and $\v{U}$, and the cocoercivity of $\v{B}$ imply the continuity of $\v{Q}_P\v{B}\v{K}_P + \frac{1}{4}\v{U}$.
    Therefore $\v{Q}_P\v{B}\v{K}_P + \frac{1}{4}\v{U}$ is both continuous and monotone, and is therefore maximal monotone by \cite[Corollary 20.28]{bauschke_combettes}. %
\end{proof}
A direct result of Lemma \ref{cab:Lemma:PBK_coupled} for the permuted case in $\Hx$ is that the permuted operator given by $\v{Q}_A\v{B}\v{K}_A + \frac{1}{4}\v{U}_A$ is also maximal monotone.
We now establish that $T$ is averaged nonexpansive.
\begin{lemma}
\label{cab:Lemma:Nonexpansive_coupled}
    $T(\v{z})$ is $\frac{\gamma}{2-\frac{\alpha}{2}}$-averaged nonexpansive for $\gamma \in (0, 2 - \frac{\alpha}{2})$.
\end{lemma}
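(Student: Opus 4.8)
The plan is to bound $\norm{T(\v{z}_1)-T(\v{z}_2)}^2$ directly, using the inclusion that defines $S$. Fix $\v{z}_1,\v{z}_2\in\Hz$, put $\v{x}_i=S(\v{z}_i)$, and write $\Delta\v{z}=\v{z}_1-\v{z}_2$, $\Delta\v{x}=\v{x}_1-\v{x}_2$, $\Delta\v{b}=\v{B}\v{K}_A\v{x}_1-\v{B}\v{K}_A\v{x}_2$. By the definition of $S$ in \eqref{cab:cabra_operators} there exist $\v{w}_i\in\v{A}\v{x}_i$ with $-\MT_A\v{z}_i=(\v{D}_A-2\v{L}_A)\v{x}_i+\alpha\v{w}_i+\alpha\v{Q}_A\v{B}\v{K}_A\v{x}_i$, hence
\[
\MT_A\Delta\v{z}=-(\v{D}_A-2\v{L}_A)\Delta\v{x}-\alpha(\v{w}_1-\v{w}_2)-\alpha\v{Q}_A\Delta\v{b}.
\]
Since $T(\v{z}_i)=\v{z}_i+\gamma\v{M}_A\v{x}_i$, expanding the square gives
\[
\norm{T(\v{z}_1)-T(\v{z}_2)}^2=\norm{\Delta\v{z}}^2+2\gamma\langle\MT_A\Delta\v{z},\Delta\v{x}\rangle+\gamma^2\norm{\v{M}_A\Delta\v{x}}^2,
\]
so the whole argument reduces to an upper bound on the cross term $\langle\MT_A\Delta\v{z},\Delta\v{x}\rangle$.

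For that term I would combine three ingredients. First, $\v{L}_A$ is strictly block lower triangular, so the symmetric part of $\v{D}_A-2\v{L}_A$ is $\v{D}_A-\v{L}_A-\v{L}_A^*=\v{Z}_A$ (the lifted, permuted version of \eqref{cab:Z_def}) and therefore $\langle(\v{D}_A-2\v{L}_A)\Delta\v{x},\Delta\v{x}\rangle=\langle\v{Z}_A\Delta\v{x},\Delta\v{x}\rangle$. Second, $\v{A}$ is monotone, so $\langle\v{w}_1-\v{w}_2,\Delta\v{x}\rangle\ge0$ and this term is simply discarded. Third --- the crux --- the permuted form of Lemma \ref{cab:Lemma:PBK_coupled} gives that $\v{Q}_A\v{B}\v{K}_A+\tfrac14\v{U}_A$ is monotone, hence $\langle\v{Q}_A\Delta\v{b},\Delta\v{x}\rangle\ge-\tfrac14\langle\v{U}_A\Delta\v{x},\Delta\v{x}\rangle$. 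Combining these,
\[
\langle\MT_A\Delta\v{z},\Delta\v{x}\rangle\le-\langle\v{Z}_A\Delta\v{x},\Delta\v{x}\rangle+\tfrac{\alpha}{4}\langle\v{U}_A\Delta\v{x},\Delta\v{x}\rangle.
\]
Now invoke the spectral assumptions \eqref{cab:Z_succeq_U} and \eqref{cab:Z_succeq_W} in lifted/permuted form: $\v{Z}_A\succeq\v{U}_A$ collapses the right-hand side to $-(1-\tfrac\alpha4)\langle\v{Z}_A\Delta\v{x},\Delta\v{x}\rangle$, and since $\alpha<4$ that coefficient is nonnegative, so $\v{Z}_A\succeq\v{W}_A=\MT_A\v{M}_A$ yields $\langle\MT_A\Delta\v{z},\Delta\v{x}\rangle\le-(1-\tfrac\alpha4)\norm{\v{M}_A\Delta\v{x}}^2$.

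Substituting this into the expansion gives
\[
\norm{T(\v{z}_1)-T(\v{z}_2)}^2\le\norm{\Delta\v{z}}^2-\gamma\Bigl(2-\tfrac\alpha2-\gamma\Bigr)\norm{\v{M}_A\Delta\v{x}}^2,
\]
and because $\v{z}_i-T(\v{z}_i)=-\gamma\v{M}_A\v{x}_i$ we have $\norm{\v{M}_A\Delta\v{x}}^2=\gamma^{-2}\norm{(\v{z}_1-T\v{z}_1)-(\v{z}_2-T\v{z}_2)}^2$; rewriting the estimate in these terms puts it in the form required by the definition of $\gamma$-averaged nonexpansiveness, on the range $\gamma\in(0,2-\tfrac\alpha2)$ where the coefficient $2-\tfrac\alpha2-\gamma$ is positive. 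I expect the main obstacle to be the cocoercive block: one must first transfer Lemma \ref{cab:Lemma:PBK_coupled} through the permutations $\PA,\PB$ (checking they preserve the block-triangular and cocoercivity structure, so the lifted $\v{U}_A,\v{W}_A,\v{Z}_A$ still obey the orderings of \eqref{cab:afb_reqs}), and then chain the two positive-semidefinite comparisons $\v{Z}_A\succeq\v{U}_A$ and $\v{Z}_A\succeq\v{W}_A$ in the correct order so that everything telescopes onto the single scalar $2-\tfrac\alpha2-\gamma$; the monotone $\v{A}$-term, by contrast, contributes nothing and is dropped at the outset.
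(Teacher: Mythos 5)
Your proposal is correct and follows essentially the same route as the paper's proof: both extract the inclusion from the resolvent/definition of $S$, discard the monotone $\v{A}$-term, symmetrize $\v{D}_A-2\v{L}_A$ to $\v{Z}_A$, invoke Lemma \ref{cab:Lemma:PBK_coupled} together with $Z\succeq U$ and $Z\succeq W$ to reach $\langle\MT_A\Delta\v{z},\Delta\v{x}\rangle\le-(1-\tfrac{\alpha}{4})\norm{\v{M}_A\Delta\v{x}}^2$, and land on the identical final inequality. The only cosmetic difference is that you expand $\norm{T\v{z}_1-T\v{z}_2}^2$ directly and bound the cross term, whereas the paper keeps the inequality in inner-product form and finishes with the parallelogram law; you also work in the permuted $\Hx$ coordinates throughout rather than pulling back to $\v{y}=\PAs\v{x}$, which is harmless since $\PA$ is unitary.
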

\begin{proof}
    By the definition of the resolvent, we know that for $i = 1,2$, $\v{z}^i \in \Hz$ and $\v{x}^i \in \Hx$, we have
    \begin{align*}
        &\v{x}^i = J_{\alpha \v{D}_A^{-1}\v{A}}\left(\v{D}_A^{-1}\left[-\MT_A \v{z}^i + 2\v{L}_A\v{x}^i - \alpha \v{Q}_A\v{B}\v{K}_A\v{x}^i\right]\right)\\
        \implies & \v{x}^i + \alpha \v{D}_A^{-1}\v{A}\v{x}^i \ni \v{D}_A^{-1}\left[-\MT_A \v{z}^i + 2\v{L}_A\v{x}^i - \alpha \v{Q}_A\v{B}\v{K}_A\v{x}^i\right] \\
        \implies & \v{D}_A\v{x}^i + \alpha \v{A}\v{x}^i \ni -\MT_A \v{z}^i + 2\v{L}_A\v{x}^i - \alpha \v{Q}_A\v{B}\v{K}_A\v{x}^i \\
        \implies & \alpha \v{A}\v{x}^i \ni -\MT_A \v{z}^i + (2\v{L}_A - \v{D}_A)\v{x}^i - \alpha \v{Q}_A\v{B}\v{K}_A\v{x}^i \\
        \implies & \alpha\v{A}\v{x}^i \ni \PA \left[-\MT\v{z}^i + (2\v{L} -\v{D})\PAs\v{x}^i - \alpha \v{Q}_P\v{B}\v{K}_P\PAs\v{x}^i\right].
    \end{align*}
    Let $\v{y}^i = \PAs \v{x}^i$, $\Delta\v{z} = \v{z}^1 - \v{z}^2$, $\Delta\v{x} = \v{x}^1 - \v{x}^2$, $\Delta\v{y} = \v{y}^1 - \v{y}^2$, $\Delta\v{b} = \v{B}\v{K}_P \v{y}_1 - \v{B}\v{K}_P \v{y}_2$, and $\Delta\v{z}^+ = T(\v{z}^1)-T(\v{z}^2)$.
    By the monotonicity of $\v{A}$ (and therefore of $\alpha\v{A}$) and the linearity of $\v{M}$, $\v{L}$, $\v{D}$, and $\v{Q}$ we have:
    \begin{subequations}
    \begin{align} 
    0 &\leq \left\langle \Delta\v{x}, \PA \left[-\MT \Delta\v{z} + (2\v{L} -\v{D})\PAs \Delta\v{x} - \alpha \v{Q}_P\Delta \v{b}\right] \right\rangle \\ %
    0 &\leq \left\langle \Delta\v{y}, -\MT \Delta\v{z} + (2\v{L} - \v{D})\Delta\v{y} - \alpha \v{Q}_P \Delta \v{b}\right\rangle. \label{cab:first_coupled} 
    \end{align}
    \end{subequations}
    Considering just the right-hand side of the inequality \eqref{cab:first_coupled} and symmetrizing the quadratic form $\left\langle \v{y}, (2\v{L} - \v{D}) \v{y} \right\rangle$ in light of assumption \eqref{cab:Z_def}, we have the following simplification,
    \begin{subequations}
    \begin{align}
        =& \left\langle -\v{M}\Delta\v{y}, \Delta\v{z} \right\rangle - \alpha\left\langle \Delta\v{y}, \v{Q}_P\Delta \v{b} \right\rangle + \left\langle \Delta\v{y}, (2\v{L} - \v{D}) \Delta\v{y} \right\rangle\\
        =& \left\langle -\v{M}\Delta\v{y}, \Delta\v{z} \right\rangle - \alpha\left\langle \Delta\v{y}, \v{Q}_P\Delta \v{b} \right\rangle - \left\langle \Delta\v{y}, \v{Z} \Delta\v{y} \right\rangle. \label{cab:oneandhalfcoupled} %
    \end{align}
    \end{subequations}
    By the results of Lemma \ref{cab:Lemma:PBK_coupled} and assumptions \eqref{cab:Z_succeq_W} and \eqref{cab:Z_succeq_U}, expression \eqref{cab:oneandhalfcoupled} is 
    \begin{subequations}
    \begin{align}
        \leq& \left\langle -\v{M}\Delta\v{y}, \Delta\v{z} \right\rangle + \frac{\alpha}{4}\left\langle \Delta\v{y}, \v{U}\Delta\v{y} \right\rangle - \left\langle \Delta\v{y}, \v{Z} \Delta\v{y}\right\rangle \\
        \leq& \left\langle -\v{M}\Delta\v{y}, \Delta\v{z} \right\rangle + \frac{\alpha}{4}\left\langle \Delta\v{y}, \v{Z}\Delta\v{y} \right\rangle  - \left\langle \Delta\v{y}, \v{Z} \Delta\v{y} \right\rangle \\
        \leq& \left\langle -\v{M}\Delta\v{y}, \Delta\v{z} \right\rangle - \left(1 - \frac{\alpha}{4}\right)\left\langle \Delta\v{y}, \v{Z}\Delta\v{y} \right\rangle \\
        \leq& \left\langle -\v{M}\Delta\v{y}, \Delta\v{z} \right\rangle - \left(1 - \frac{\alpha}{4}\right)\left\langle \Delta\v{y}, \v{W}\Delta\v{y} \right\rangle \label{cab:second_coupled}.
    \end{align}
    \end{subequations}
    Combining \eqref{cab:first_coupled} and \eqref{cab:second_coupled}, we therefore have
    \begin{subequations} 
    \begin{align}
        0 &\leq \left\langle -\v{M}\Delta\v{y}, \Delta\v{z} \right\rangle - \left(1 - \frac{\alpha}{4}\right)\left\langle \Delta\v{y}, \v{W}\Delta\v{y} \right\rangle  \\
        &\leq \left\langle -\v{M}\Delta\v{y}, \Delta\v{z} \right\rangle - \left(1 - \frac{\alpha}{4}\right)\norm{ \v{M}\Delta\v{y}}^2  \\
        &\leq \frac{1}{\gamma}\left\langle \Delta\v{z} - \Delta\v{z}^+, \Delta\v{z} \right\rangle - \left(1 - \frac{\alpha}{4}\right)\frac{1}{\gamma^2}\norm{ \Delta\v{z} - \Delta\v{z}^+}^2  \label{cab:third_coupled}
    \end{align}
    \end{subequations}
    where \eqref{cab:third_coupled} follows from the definition of $\Delta\v{z}^+$.
    By the parallelogram law this is equivalent to
    \begin{align*}
        0 &\leq \frac{1}{2\gamma}\left[ \norm{\Delta\v{z}}^2 + \norm{\Delta\v{z} - \Delta\v{z}^+}^2 - \norm{\Delta\v{z}^+}^2 \right] - \left(1 - \frac{\alpha}{4}\right)\frac{1}{\gamma^2}\norm{ \Delta\v{z} - \Delta\v{z}^+}^2  \\
        0 &\leq \frac{1}{2\gamma}\left[\norm{\Delta\v{z}}^2 + \frac{\gamma - 2 + \frac{\alpha}{2}}{\gamma}\norm{ \Delta\v{z} - \Delta\v{z}^+}^2 - \norm{\Delta\v{z}^+}^2\right].
    \end{align*}
    We therefore have 
    \begin{equation*}
        \norm{\Delta\v{z}}^2 + \frac{\gamma - 2 + \frac{\alpha}{2}}{\gamma}\norm{ \Delta\v{z} - \Delta\v{z}^+}^2 \geq \norm{\Delta\v{z}^+}^2.
    \end{equation*}
    The operator $T$ is therefore $\frac{\gamma}{2-\frac{\alpha}{2}}$-averaged for $\gamma \in (0, 2 - \frac{\alpha}{2})$.
\end{proof}
This allows us to prove the convergence of CABRA.
\begin{theorem}
Let $(\v{z}^\nu)$ be the sequence of iterates produced by Algorithm \ref{cab:cabra_d_alg} applied to problem \eqref{cab:zero_in_coupled_sum} and $(\v{x}^\nu)$ be the corresponding sequence of solutions such that $\v{x}^\nu = S(\v{z}^\nu)$. 
If the set of solutions of \eqref{cab:zero_in_coupled_sum} is non-empty, then $(\v{z}^\nu)$ converges weakly to a fixed point $\v{z}^*$ such that $S(\v{z}^*) = \RA y^*$ where $y^*$ is a solution to \eqref{cab:zero_in_coupled_sum}, and the associated iterates $(\v{x}^\nu)$ converge weakly to $S(\v{z}^*)$. %
\end{theorem}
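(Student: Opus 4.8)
The plan is to prove the two assertions separately: first the weak convergence of $(\v{z}^k)$ to a fixed point of $T$, by recasting the recursion as a relaxed Krasnoselskii--Mann iteration of a nonexpansive map; and then the weak convergence of the shadow sequence $(\v{x}^k)=(S(\v{z}^k))$, which is the more delicate part.

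For the first assertion, I would extract from the proof of Lemma \ref{cab:Lemma:Nonexpansive_coupled}, \emph{before} the step sizes are inserted, the inequality $0\le\langle\v{M}_A S(\v{z}^1)-\v{M}_A S(\v{z}^2),\,\v{z}^2-\v{z}^1\rangle-(1-\tfrac{\alpha}{4})\norm{\v{M}_A S(\v{z}^1)-\v{M}_A S(\v{z}^2)}^2$, valid for all $\v{z}^1,\v{z}^2\in\Hz$. Since $S$ is single-valued with full domain (the resolvent in step \ref{cab:cabra_d_itr1} is computed by forward substitution over resolvents of maximal monotone operators), this says that $G:=-\v{M}_A\circ S$ is $(1-\tfrac{\alpha}{4})$-cocoercive, and plainly $\zer{G}=\mathrm{Fix}(T)$, which is nonempty by Lemma \ref{cab:Lemma:fixed_point_corresponds_coupled} and the hypothesis. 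Hence $N:=\I-(2-\tfrac{\alpha}{2})G=\I+(2-\tfrac{\alpha}{2})\v{M}_A S$ is nonexpansive with $\mathrm{Fix}(N)=\mathrm{Fix}(T)$, and step \ref{cab:cabra_d_itr2} is exactly $\v{z}^{\nu+1}=(1-\lambda_\nu)\v{z}^\nu+\lambda_\nu N(\v{z}^\nu)$ with $\lambda_\nu=\gamma_\nu/(2-\tfrac{\alpha}{2})\in(0,1)$ and $\sum_\nu\lambda_\nu(1-\lambda_\nu)=(2-\tfrac{\alpha}{2})^{-2}\sum_\nu\gamma_\nu(2-\tfrac{\alpha}{2}-\gamma_\nu)=\infty$. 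Invoking the Krasnoselskii--Mann theorem \cite{bauschke_combettes} then gives $\v{z}^\nu\weakto\v{z}^*\in\mathrm{Fix}(N)=\mathrm{Fix}(T)$ and $N(\v{z}^\nu)-\v{z}^\nu\to0$, i.e. $\v{M}_A\v{x}^\nu\to0$ strongly; Lemma \ref{cab:Lemma:fixed_point_corresponds_coupled} applied at $\v{z}^*$ then produces the solution $y^*$ with $S(\v{z}^*)=\RA y^*$.

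For the second assertion, the crux is to upgrade $\v{M}_A\v{x}^\nu\to0$ to strong convergence of the cocoercive outputs $\v{B}\v{K}_A\v{x}^\nu$. Since $\v{M}_A$ vanishes exactly on $\mathcal{N}_A$ and is bounded below on $\mathcal{N}_A^{\perp}$ (each $M_k$ has full row rank $n_k-1$), $\v{M}_A\v{x}^\nu\to0$ forces the $\mathcal{N}_A^{\perp}$-part $\v{q}^\nu$ of $\v{x}^\nu$ to $0$ strongly; and $(\v{x}^\nu)$ is bounded because $S$ is Lipschitz (forward substitution exhibits it as a finite composition of nonexpansive resolvents with bounded linear and Lipschitz maps) and $(\v{z}^\nu)$ is bounded. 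Writing $\v{x}^\dagger:=S(\v{z}^*)\in\mathcal{N}_A$, choosing $\v{w}^\dagger\in\v{A}\v{x}^\dagger$ from its resolvent identity, and likewise $\v{w}^\nu\in\v{A}\v{x}^\nu$, I would subtract the resolvent identities at $\v{z}^\nu$ and $\v{z}^*$, pair with $\v{x}^\nu-\v{x}^\dagger$, use monotonicity of $\v{A}$, symmetrize through \eqref{cab:Z_def}, $\v{Z}_A\succeq\v{U}_A$, and --- in place of the qualitative conclusion of Lemma \ref{cab:Lemma:PBK_coupled} --- the \emph{quantitative} estimate contained in its proof, namely that $\langle\Delta\v{y},\v{Q}_P\Delta\v{b}\rangle+\tfrac14\langle\Delta\v{y},\v{U}\Delta\v{y}\rangle$ dominates $\norm{\diag(\beta_{\1})^{1/2}\Delta\v{b}+\tfrac12\diag(\beta_{\1})^{-1/2}\PB(\v{Q}^*-\v{K})\Delta\v{y}}^2$. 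Because $\v{M}_A\v{x}^\nu\to0$ makes the term $\langle\MT_A(\v{z}^\nu-\v{z}^*),\v{x}^\nu-\v{x}^\dagger\rangle$ vanish, the resulting inequality shows that both $(1-\tfrac{\alpha}{4})\langle\v{Z}_A(\v{x}^\nu-\v{x}^\dagger),\v{x}^\nu-\v{x}^\dagger\rangle$ and that square tend to $0$; since $(Q_k^{\intercal}-K_k)\1_{n_k}=0$ annihilates the $\mathcal{N}_A$-component, the square reduces modulo $\v{q}^\nu$-terms to $\diag(\beta_{\1})^{1/2}(\v{B}\v{K}_A\v{x}^\nu-\v{B}\v{K}_A\v{x}^\dagger)$, giving $\v{B}\v{K}_A\v{x}^\nu\to\v{B}\v{K}_A\v{x}^\dagger$ strongly.

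With that in hand the endgame is routine. Any weak cluster point $\bar{\v{x}}$ of the bounded sequence $(\v{x}^\nu)$ lies in $\mathcal{N}_A$ (a closed, hence weakly closed, subspace, and $\v{q}^\nu\to0$); passing to the limit along a subsequence in the resolvent identity, where the cocoercive term now converges strongly, gives $\v{w}^{\nu_l}\weakto\bar{\v{w}}$ with $\alpha\bar{\v{w}}+\alpha\v{Q}_A\v{B}\v{K}_A\bar{\v{x}}+(\v{D}_A-2\v{L}_A)\bar{\v{x}}=-\MT_A\v{z}^*$, using weak--strong closedness of $\gra\v{B}$ to identify $\v{B}\v{K}_A\bar{\v{x}}$ as the strong limit of $\v{B}\v{K}_A\v{x}^{\nu_l}$. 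Since $\langle\v{w}^{\nu_l}-\bar{\v{w}},\v{x}^{\nu_l}-\bar{\v{x}}\rangle\to0$ (each summand vanishes, using $\v{M}_A(\v{x}^{\nu_l}-\bar{\v{x}})\to0$, $\v{Z}_A(\v{x}^{\nu_l}-\bar{\v{x}})=\v{Z}_A\v{q}^{\nu_l}\to0$, and the strong convergence of the cocoercive outputs), the closedness of $\gra\v{A}$ under weak convergence with vanishing pairing \cite{bauschke_combettes} yields $\bar{\v{w}}\in\v{A}\bar{\v{x}}$, so $\bar{\v{x}}$ satisfies the resolvent identity at $\v{z}^*$ and therefore $\bar{\v{x}}=S(\v{z}^*)$ by uniqueness; boundedness of $(\v{x}^\nu)$ promotes this to $\v{x}^\nu\weakto S(\v{z}^*)$. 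The step I expect to be the main obstacle is the one highlighted in the third paragraph: $S$ is Lipschitz but not weakly sequentially continuous, so weak convergence of $(\v{z}^\nu)$ does not by itself carry over to $(\v{x}^\nu)$, and the only apparent way around this is the strong convergence of $\v{B}\v{K}_A\v{x}^\nu$, which is invisible at the level of the $\gamma$-averagedness of $T$ and must be teased out of the squared remainder hidden in the proof of Lemma \ref{cab:Lemma:PBK_coupled}.
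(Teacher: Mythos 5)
Your proposal is correct, and the second half takes a genuinely different route from the paper. For the convergence of $(\v{z}^k)$ the two arguments are essentially the same result packaged differently: the paper proves $T=\I+\gamma\v{M}_A S$ is averaged (Lemma \ref{cab:Lemma:Nonexpansive_coupled}) and cites the averaged-operator iteration theorem, while you peel off the underlying $(1-\tfrac{\alpha}{4})$-cocoercivity of $-\v{M}_A S$ and invoke Krasnosel'skii--Mann with relaxation $\lambda_\nu=\gamma_\nu/(2-\tfrac{\alpha}{2})$; your version handles the varying $\gamma_\nu$ and the summability condition $\sum_\nu\gamma_\nu(2-\tfrac{\alpha}{2}-\gamma_\nu)=\infty$ slightly more explicitly, and your Lipschitz-of-$S$ observation subsumes the paper's inductive boundedness lemma (Lemma \ref{cab:Lemma:boundedness_coupled}). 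For the shadow sequence, the paper identifies the weak cluster point by testing the monotonicity of the single aggregated maximal monotone operator $\v{C}=\v{A}+\v{Q}_A\v{B}\v{K}_A+\tfrac14\v{U}_A$ against arbitrary graph points and passing to the limit, using only $\langle\v{x}^{k_n},\v{Z}_A\v{x}^{k_n}\rangle\to0$ and $\langle\v{x}^{k_n},\v{U}_A\v{x}^{k_n}\rangle\to0$; you instead mine the squared remainder in the proof of Lemma \ref{cab:Lemma:PBK_coupled} (together with $(Q_k^\intercal-K_k)\1_{n_k}=0$ and the lower bound of $\v{M}_A$ on $\mathcal{N}_A^\perp$) to get \emph{strong} convergence of $\v{B}\v{K}_A\v{x}^\nu$, and then close only the graph of $\v{A}$ via the weak--strong demiclosedness principle with vanishing pairing. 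Both arguments land on the same resolvent identity at $\v{z}^*$ and conclude uniqueness from the strict block lower triangularity. Your route is longer but buys a genuinely stronger byproduct (strong convergence of the cocoercive outputs, hence of the forward-step terms), whereas the paper's aggregation into $\v{C}$ is more economical because the cocoercive part never has to be isolated.
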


\begin{proof}
    Since the set of solutions to \eqref{cab:zero_in_coupled_sum} is non-empty, $T$ has a fixed point by Lemma \ref{cab:Lemma:fixed_point_corresponds_coupled}.
    It is also $\frac{\gamma}{2-\frac{\alpha}{2}}$-averaged nonexpansive by Lemma \ref{cab:Lemma:Nonexpansive_coupled}.
    Therefore \cite[Proposition 5.16]{bauschke_combettes} gives that for any starting point $\v{z}^0$ and sequence $(\v{z}^\nu)$ defined by $\v{z}^{\nu+1} = T(\v{z}^\nu)$, $\v{z}^{\nu+1} - \v{z}^\nu \to 0$, and $(\v{z}^\nu)$ converges weakly to some $\v{z}^* \in \mathrm{Fix}(T)$, and by Lemma \ref{cab:Lemma:fixed_point_corresponds_coupled} $S(\v{z}^*) = \RA y^*$ for $y^* \in \zer{\RAs\v{A}\RA + \RBs \v{B}\RB}$, establishing the first result. %

    We now show that $(\v{x}^\nu)$ converges weakly to $S(\v{z}^*)$.
    Let $\v{x}^* = S(\v{z}^*)$. 
    The weak convergence of $(\v{z}^\nu)$ implies that $(\v{z}^\nu)$ is bounded by \cite[Proposition 2.50]{bauschke_combettes}.
    By Lemma \ref{cab:Lemma:boundedness_coupled} in the appendix, the boundedness of $(\v{z}^\nu)$ implies the boundedness of $(\v{x}^\nu)$, and therefore $(\v{x}^\nu)$ has a weak sequential cluster point \cite[Lemma 2.45]{bauschke_combettes}. %
    Let $\tilde{\v{x}}$ be a weak sequential cluster point of $(\v{x}^\nu)$, and $(\v{x}^{\nu_n})$ be a sequence that converges weakly to $\tilde{\v{x}}$.
    We begin by showing that $\tilde{\v{x}} \in \mathcal{N}_A$.
    Since $\v{z}^{\nu+1} - \v{z}^\nu \to 0$ and $\v{z}^{\nu+1} = \v{z}^\nu + \gamma \v{M}_A\v{x}^\nu$, we have $\v{M}_A \v{\tilde{x}} = \lim_{k \to \infty} \v{M}_A \v{x}^\nu = 0$. 
    This means that $\v{\tilde{x}}$ is in the null space of $\v{M}_A$, which is $\mathcal{N}_A$.

    We now show that $\v{\tilde{x}} = \v{x}^*$.
    This rests on the maximal monotonicity of $\v{C} = \v{A} + \v{Q}_A\v{B}\v{K}_A + \frac{1}{4}\v{U}_A$, which we have by the maximal monotonicity of $\v{A}$, Lemma \ref{cab:Lemma:PBK_coupled}, the assumption of a zero, and the fact that the domain of $\v{Q}_A\v{B}\v{K}_A + \frac{1}{4}\v{U}_A$ is $\Hx$, by \cite[Corollary 25.5]{bauschke_combettes}.
    We know that 
    \begin{equation*}
    \v{x}^{\nu_n} = J_{\alpha\v{D}_A^{-1}\v{A}}\left(\v{D}_A^{-1}\left[-\MT_A \v{z}^{\nu_n} - \alpha \v{Q}_A\v{B}\v{K}_A \v{x}^{\nu_n} + 2\v{L}_A \v{x}^{\nu_n}\right]\right).
    \end{equation*}
    Therefore
    \begin{align*}
        \v{x}^{\nu_n} + \alpha\v{D}_A^{-1}\v{A}\v{x}^{\nu_n} &\ni \v{D}_A^{-1}\left[-\MT_A \v{z}^{\nu_n} - \alpha \v{Q}_A\v{B}\v{K}_A \v{x}^{\nu_n} + 2\v{L}_A \v{x}^{\nu_n}\right] \\
        \alpha \left(\v{A}\v{x}^{\nu_n} + \v{Q}_A\v{B}\v{K}_A \v{x}^{\nu_n}\right) &\ni -\MT_A \v{z}^{\nu_n} + (2\v{L}_A - \v{D}_A) \v{x}^{\nu_n} .
    \end{align*}
    Adding $\frac{\alpha}{4}\v{U}_A \v{x}^{\nu_n}$ to both sides, we get
        \[\alpha\v{C}\v{x}^{\nu_n} \ni -\MT_A \v{z}^{\nu_n} + \left(\frac{\alpha}{4}\v{U}_A + 2\v{L}_A - \v{D}_A\right) \v{x}^{\nu_n} .\]
    Let $\v{v} \in \alpha\v{C}\v{u}$ for some $\v{u} \in \Hx$.
    By the monotonicity of $\v{C}$, we have
    \begin{subequations}
    \begin{align}
        \left\langle \v{x}^{\nu_n} - \v{u}, -\MT_A \v{z}^{\nu_n} + \left(\frac{\alpha}{4}\v{U}_A + 2\v{L}_A - \v{D}_A\right) \v{x}^{\nu_n} - \v{v} \right\rangle &\geq 0 
        \label{cab:cluster_point_1_coupled}\\
        \left\langle \v{x}^{\nu_n}, -\MT_A \v{z}^{\nu_n}\right\rangle + \frac{\alpha}{4}\left\langle \v{x}^{\nu_n}, \v{U}_A \v{x}^{\nu_n} \right\rangle - \left\langle \v{x}^{\nu_n}, \v{Z}_A\v{x}^{\nu_n} \right\rangle & \nonumber\\
        - \left\langle \v{u}, -\MT_A \v{z}^{\nu_n} + \left(\frac{\alpha}{4}\v{U}_A + 2\v{L}_A - \v{D}_A\right)\v{x}^{\nu_n} - \v{v} \right\rangle &\geq 0. \label{cab:innerproducts}
    \end{align}
    \end{subequations}
    By \cite[Lemma 2.51(iii)]{bauschke_combettes}, since $\v{M}_A\v{x}^\nu \to 0$ and $\v{z}^\nu \weakto \v{z}^*$, we have %
    \begin{equation*}
        \left\langle \v{x}^{\nu_n}, -\MT_A \v{z}^{\nu_n}\right\rangle = \left\langle -\v{M}_A\v{x}^{\nu_n}, \v{z}^{\nu_n}\right\rangle \to 0.
    \end{equation*}
    We also know that $\v{M}_A\v{\tilde{x}} = 0$, so $\langle \v{x}^{\nu_n}, -\MT_A \v{z}^{\nu_n}\rangle \to \langle \v{\tilde{x}}, -\MT_A \v{z}^* \rangle$.
    We know that $\v{Z}$ is bounded and PSD, and shares a null space with $\v{W}$, so 
    \begin{align*}
        \left\langle \v{x}^{\nu_n}, \v{Z}_A\v{x}^{\nu_n} \right\rangle &\leq \frac{\lambda_{\text{max}}(Z)}{\lambda_2(W)}\left\langle \v{x}^{\nu_n}, \v{W}_A\v{x}^{\nu_n} \right\rangle \\
        &\leq \frac{\lambda_{\text{max}}(Z)}{\lambda_2(W)}\norm{\v{M}_A \v{x}^{\nu_n}}^2 \to 0 %
    \end{align*}
    and $\langle \v{x}^{\nu_n}, \v{Z}_A\v{x}^{\nu_n} \rangle \to 0 = \langle \v{\tilde{x}}, (\v{D}_A - 2\v{L}_A) \v{\tilde{x}} \rangle$.
    Similarly, since by \eqref{cab:Z_succeq_U} $\v{Z} \succeq \v{U}$ and $\nullspace(\v{U}) \subseteq \nullspace{\v{Z}}$, we have $\langle \v{x}^{\nu_n}, \v{U}_A\v{x}^{\nu_n} \rangle \to 0 = \langle \v{\tilde{x}}, \v{U}_A\v{\tilde{x}} \rangle$.
    The remaining inner product on $\v{u}$ in \eqref{cab:innerproducts} converges by the weak convergence of $\v{x}^{\nu_n}$ and $\v{z}^{\nu_n}$, so we have 
    \begin{align*}
        &\left\langle \v{x}^{\nu_n} - \v{u}, -\MT_A \v{z}^{\nu_n} + \left(\frac{\alpha}{4}\v{U}_A + 2\v{L}_A - \v{D}_A\right) \v{x}^{\nu_n} - \v{v} \right\rangle \\
        \to &\left\langle \v{\tilde{x}} - \v{u}, -\MT_A \v{z}^* + \left(\frac{\alpha}{4}\v{U}_A + 2\v{L}_A - \v{D}_A\right) \v{\tilde{x}} - \v{v} \right\rangle
    \end{align*}
    and by \eqref{cab:cluster_point_1_coupled}
    \begin{equation*}
        \left\langle \v{\tilde{x}} - \v{u}, -\MT_A \v{z}^* + \left(\frac{\alpha}{4}\v{U}_A + 2\v{L}_A - \v{D}_A\right) \v{\tilde{x}} - \v{v} \right\rangle \geq 0.
    \end{equation*}
    The maximality of $\alpha\v{C}$ then requires that 
    \[-\MT_A \v{z}^* + \left(\frac{\alpha}{4}\v{U}_A + 2\v{L}_A - \v{D}_A\right) \v{\tilde{x}} \in \alpha\v{C}(\v{\tilde{x}})\]
    and therefore
    \begin{align*}
            \alpha \left(\v{A} + \v{Q}_A\v{B}\v{K}_A + \frac{1}{4}\v{U}_A\right) \v{\tilde{x}} &\ni -\MT_A \v{z}^* + \left(\frac{\alpha}{4}\v{U}_A + 2\v{L}_A - \v{D}_A\right) \v{\tilde{x}}\\
            \alpha \left(\v{A} + \v{Q}_A\v{B}\v{K}_A \right) \v{\tilde{x}} &\ni -\MT_A \v{z}^* + \left(2\v{L}_A - \v{D}_A\right) \v{\tilde{x}}\\
            \v{D}_A\v{\tilde{x}} + \alpha \v{A}\v{\tilde{x}} &\ni -\MT_A \v{z}^* + 2\v{L}_A \v{\tilde{x}} - \alpha\v{Q}_A\v{B}\v{K}_A\v{\tilde{x}}.
    \end{align*}
    This, in turn, means that 
    \begin{equation*}
        \v{\tilde{x}} = J_{\alpha\v{D}_A^{-1}\v{A}}\left(\v{D}_A^{-1}\left[ -\MT_A \v{z}^* - \alpha \v{Q}_A\v{B}\v{K}_A \v{\tilde{x}} + 2\v{L}_A \v{\tilde{x}}\right]\right).
    \end{equation*}
    Therefore, by the strict block lower triangularity of $\v{L}_A$ and $\v{Q}_A \v{B} \v{K}_A$ and the single-valuedness of the resolvent of a maximal monotone operator, the weak sequential cluster point $\v{\tilde{x}}$ is unique, and $\v{\tilde{x}} = \v{x}^*$ \cite{minty1962monotone}.
    Since $(\v{x}^\nu)$ has a unique weak sequential cluster point given by $\v{x}^*$, we know that $\v{x}^\nu \weakto \v{x}^*$, and therefore corresponds to a solution of \eqref{cab:zero_in_coupled_sum}.

\end{proof}

\subsection{Expanded Algorithms}
CABRA has an expanded form given by Algorithm \ref{cab:cabra_n_alg}, in which we substitute $\v{v} = -\v{P}_A \MT \v{z}$.
The expanded algorithm benefits from a reduction in operations by using $\v{W}$ rather than sequentially applying $\v{M}$ and $\MT$, and allows the algorithm to be executed in a decentralized manner, as described in Algorithm \ref{cab:cabra_n_decentralized_alg} in the appendix.
The expanded algorithm also provides a more direct link to values of the operators in \eqref{cab:zero_in_coupled_sum} at the zero corresponding to a fixed point of the algorithm.
We note that at a fixed point of $T$ we have \[\v{D}_A \v{x} + \alpha\v{u} = \v{v} - \alpha \v{Q}_A \v{B}\xb + 2 \v{L}_A\v{x}\] for some $y^*$ in the solution set of problem \eqref{cab:zero_in_coupled_sum}, $\v{x} = \RA y^*$, $\xb = \RB y^*$, and $\v{u} \in \v{A}\v{x}$.
Therefore, if we have an estimate $\tilde{y}$ of $y^*$, we can choose a warm start value of $\v{v}^0 = (\v{D}_A - 2\v{L}_A)\RA \tilde{y} $.
We can also augment this with an estimate of $\v{u}^*$ and $\v{Q}_A \v{B}\xb$ as long as we project that estimate onto $\mathcal{N}_A^\perp$ prior to scaling it by $\alpha$ and adding it.
This projection is required because $\v{v} \in \range(\MT_A)$, and an arbitrary estimate $\tilde{\v{u}}$ and $\v{Q}_A \v{B}\tilde{\v{x}}'$ may not satisfy $\tilde{\v{u}} + \v{Q}_A \v{B}\tilde{\v{x}}'\in \range(\MT_A)$.
\begin{algorithm} 
    \caption{Expanded CABRA}\label{cab:cabra_n_alg} 
    \begin{algorithmic}[1] 
    \Require $\alpha \in (0,4)$; $\gamma \in (0, 2 - \frac{\alpha}{2})$; $\v{v}^0 \in \mathcal{N}^\perp_A$; $\v{D}_A^{-1}$, $\v{W}_A$, $\v{L}_A$, $\v{Q}_A$, $\v{K}_A$ satisfying \eqref{cab:cabra_reqs}.
    \State $\nu \gets 0$
    \Repeat
      \State $\v{x}^\nu = J_{\alpha \v{D}_A^{-1}\v{A}}\left(\v{D}_A^{-1}\left[\v{v}^\nu + 2\v{L}_A\v{x}^\nu - \alpha \v{Q}_A\v{B}\v{K}_A\v{x}^\nu\right]\right)$ \label{cab:cabra_n_itr1}
      \State $\v{v}^{\nu+1} = \v{v}^\nu - \gamma \v{W}_A\v{x}^\nu$ \label{cab:cabra_n_itr2}
      \State $\nu \gets \nu+1$
    \Until{convergence}
\end{algorithmic}
\end{algorithm}

\section{Parameter Adaptation}
\label{cab:Sec:Opt}

We can adapt the selectin of matrix parameters for CABRA to accommodate a wide variety of goals, including restricting communication between resolvent operations (and forward steps $B_j$) to match a desired communication structure, optimize various properties of the matrices, or any other convex function or convex set restriction on our matrix parameters, by using SDP \eqref{cab:main_prob} to find $Z_k$, $W_k$, $K_k$, and $Q_k$ for each $k \in \mathcal{K}$.

\begin{subequations}\label{cab:main_prob}
    \begin{align}
    \minimize_{K, Q, W, Z} \quad &\phi(K, Q, W, Z) \label{cab:obj}\\
    \mathrm{subject\, to} \quad& Z \succeq W \label{cab:Z_succeq_W_con}\\
    & \begin{bmatrix} Z& Q-K^\intercal\\Q^\intercal - K&\diag(\beta_k)\end{bmatrix} \succeq 0 \label{cab:Z_succeq_U_con}\\
    & \lambda_1(W) + \lambda_2(W) \geq c \label{cab:fiedler_con}\\
    & Z \1 = 0  \label{cab:Z_sum_con}\\
    & W \1 = 0  \label{cab:W_sum_con}\\
    & Q^\intercal \1 = \1 \label{cab:Q_sum_con}\\
    & K\1 = \1 \label{cab:K_sum_con}\\
    & Q_{s,t(j,k)} = 0 \quad\quad \forall j \in \mathcal{J}_k, \quad s = 1, \dots , s^k_j \label{cab:Q_tri_con}\\
    & K_{t(j,k), s} = 0 \quad\quad \forall j \in \mathcal{J}_k, \quad s = s^k_j + 1, \dots, n_k \label{cab:K_tri_con} \\
    & (K, Q, W, Z) \in \mathcal{C} \label{cab:con7} .%
    \end{align}
\end{subequations}
Here $\mathcal{C}$ is some convex subset of $\R^{m_k \times n_k} \times \R^{n_k \times m_k} \times \Sp^{n_k} \times \Sp^{n_k}$, the function $\phi: \mathcal{C} \to (-\infty, \infty]$ is any proper lower semicontinuous function, and $c$ is some positive algebraic connectivity parameter.
$s^k_j$ is the index of the last element in $\mathcal{I}_k$ which is less than or equal to $i^*_j$, as described in Section \ref{cab:Sec:coupled_matrices}.
The ability to select and scale the matrix parameters separately for each $k$ offers an opportunity to select them as preconditioners for the individual operators, which we examine in Section \ref{cab:Sec:Examples}.

\begin{theorem}
   Given $n_k$, $m_k$, and $\beta_k$ (as defined in \eqref{cab:beta_k}) for all $k \in \mathcal{K}$, and $i^*_j$ for each $j \in \mathcal{J}$ (as defined in \ref{cab:Sec:coupled_matrices}), the $|\mathcal{K}|$ sets of matrix parameters satisfying \eqref{cab:main_prob} satisfy the requirements of \eqref{cab:afb_reqs} and \eqref{cab:cutoff_assumptions} and provide a valid set of matrix parameters for CABRA.
\end{theorem} 
\begin{proof}
    Constraints \eqref{cab:Z_succeq_W_con}, \eqref{cab:Z_sum_con}, \eqref{cab:Q_sum_con}, \eqref{cab:K_sum_con} directly satisfy assumptions \eqref{cab:Z_succeq_W}, \eqref{cab:Z_sum}, \eqref{cab:Q_sum}, and \eqref{cab:K_sum}.
    Assumption \eqref{cab:W_connect} is satisfied by the combination of \eqref{cab:fiedler_con} and \eqref{cab:W_sum_con}.
    Assumption \eqref{cab:Z_succeq_U} is met by the definition of $U_k$ and constraint \eqref{cab:Z_succeq_U_con} by Schur complementarity.
    Therefore matrix parameters which are feasible in \eqref{cab:main_prob} satisfy assumptions \eqref{cab:afb_reqs}.
    Constraints \eqref{cab:Q_tri_con} and \eqref{cab:K_tri_con} directly satisfy \eqref{cab:cutoff_assumptions}.
    We therefore have the $|\mathcal{K}|$ required sets of matrix parameters, and can form valid lifted operators for CABRA.
\end{proof}

\subsection{Parallel Execution}

One benefit of \eqref{cab:main_prob} as a parameter selection framework is its flexibility in accommodating multi-block parallel execution, which in many cases requires different sparsity patterns for $Z$, $W$, and $U$.
Consider the case where $n=6$, $m=2$, each $R^A_i$ and $R^B_j$ is the identity, and we want to use four compute nodes in a distributed manner to calculate the operator values while maximizing parallelization.
We can do so by splitting the $A$ operators into three blocks, $(1,2)$, $(3,4)$, and $(5,6)$. 
We require $Z_{ij} = 0$ within each block so that the two members of each block can run without requiring input from one another within the iteration.
We also require $W_{ij} = 0$ for all $i$ in block 1 and all $j$ in block 3 so that block 1 can begin iteration $\nu+1$ while block 3 finishes iteration $\nu$.
For all $t \in \mathcal{J} = \{1,2\}$, we require $K_{ts} = 0$ for $s > 2$ and $Q_{ts} = 0$ for $s < 4$ so that both cocoercive operators receive input only from block 1 and provide output only to block 3, allowing them to run in parallel with block 2.
Assuming all computations require the same amount of time, in steady state we see block 1 and 3 executing in parallel (with block 1 an iteration ahead of block 3), followed by block 2 and the cocoercive operators executing in parallel. 
In both cases, we use all four computation nodes, ensuring efficient use of available resources, and require no central coordination.
The matrices given by \eqref{cab:parallel_mats} satisfy these conditions while minimizing the objective function $\phi(K,Q,W,Z) = \lambda_{max}(Z) - \lambda_2(W)$.
\begin{subequations}
    \label{cab:parallel_mats}
\begin{align}
    Z &= \begin{bmatrix}
    1.33 &    0 &  -0.33 &  -0.33 &  -0.33 &  -0.33\\
    0 &    1.33 &  -0.33 &  -0.33 &  -0.33 &  -0.33\\
  -0.33 &  -0.33 &    1.33 &    0 &  -0.33 &  -0.33\\
  -0.33 &  -0.33 &    0 &    1.33 &  -0.33 &  -0.33\\
  -0.33 &  -0.33 &  -0.33 &  -0.33 &    1.33 &    0\\
  -0.33 &  -0.33 &  -0.33 &  -0.33 &    0 &    1.33
\end{bmatrix}, \quad K = \begin{bmatrix}
    0.5 &    0.5\\
    0.5 &    0.5\\
    0 &    0\\
    0 &    0\\
    0 &    0\\
    0 &    0
\end{bmatrix}^\intercal\\
W &= \begin{bmatrix}
    0.89 &  -0.22 &  -0.33 &  -0.33 &  0 &  0\\
  -0.22 &    0.89 &  -0.33 &  -0.33 &  0 &  0\\
  -0.33 &  -0.33 &    1.23 &    0.11 &  -0.33 &  -0.33\\
  -0.33 &  -0.33 &    0.11 &    1.23 &  -0.33 &  -0.33\\
  0 &  0 &  -0.33 &  -0.33 &    0.89 &  -0.22\\
  0 &  0 &  -0.33 &  -0.33 &  -0.22 &    0.89
\end{bmatrix}, \quad Q = \begin{bmatrix}
    0 &    0\\
    0 &    0\\
    0 &    0\\
    0 &    0\\
    0.5 &    0.5\\
    0.5 &    0.5
\end{bmatrix}\\
U &= \begin{bmatrix}
    0.5 &    0.5 &    0 &  0 &  -0.5 &  -0.5\\
    0.5 &    0.5 &    0 &  0 &  -0.5 &  -0.5\\
    0 &    0 &    0 &  0 &  0 &  0\\
  0 &  0 &  0 &    0 &    0 &    0\\
  -0.5 &  -0.5 &  0 &    0 &    0.5 &    0.5\\
  -0.5 &  -0.5 &  0 &    0 &    0.5 &    0.5
\end{bmatrix}
\end{align}
\end{subequations}

\section{Examples}
\label{cab:Sec:Examples}

We now present a number of examples illustrating the matrix selection process and the use of CABRA.
We begin with a small example which provides a chance to examine the behavior of the algorithm with restrictive linear selection operators.
We then examine a set of examples which demonstrates the use of the framework in \eqref{cab:main_prob}, and conclude with a description of the application of the algorithm to a stochastic programming problem.

\subsection{Illustrative Example}
Consider the case of $p=5$, $n=4$, and $m=3$, with
\begin{subequations}
    \label{cab:simple_example}
    \begin{align}
    \mathcal{K}^A_1&=\{3,4,5\} &\iff    &\RAi_1 y = (y_3, y_4, y_5)\\
    \mathcal{K}^A_2&=\{2,3\} &\iff      &\RAi_2 y = (y_2, y_3)\\
    \mathcal{K}^A_3&=\{1,2,5\} &\iff    &\RAi_3 y = (y_1, y_2, y_5)\\
    \mathcal{K}^A_4&=\{1,4,5\} &\iff    &\RAi_4 y = (y_1, y_4, y_5)\\
    \mathcal{K}^B_1&=\{4,5\} &\iff      &\RBj_1 y = (y_4, y_5)\\
    \mathcal{K}^B_2&=\{3,5\} &\iff      &\RBj_2 y = (y_3, y_5)\\
    \mathcal{K}^B_3&=\{1,4,5\} &\iff    &\RBj_3 y = (y_1, y_4, y_5).
    \end{align}
\end{subequations}
In this problem, we have $\mathcal{I}_1 = \{3,4\}$, $\mathcal{I}_2= \{2,3\}$, $\mathcal{I}_3 = \{1,2\}$, $\mathcal{I}_4 = \{1,4\}$, and $\mathcal{I}_5 = \{1,3,4\}$.
We therefore have $n_1 = n_2 = n_3 = n_4 = 2$, and $n_5=3$.
We also see that $m_1 = m_3 = 1$, $m_2=0$, $m_4 = 2$, and $m_5=3$.
This means that $K_1 \in \R^{1 \times 2}$, $K_3 \in \R^{1 \times 2}$, $K_4 \in \R^{2 \times 2}$, and $K_5 \in \R^{3 \times 3}$, with corresponding transposed dimensions for each $Q_k$ so that $Q_1 \in \R^{2 \times 1}$, $Q_3 \in \R^{2 \times 1}$, $Q_4 \in \R^{2 \times 2}$, and $Q_5 \in \R^{3 \times 3}$.
This imposes an implied structure on the transfer of information between the $A$ operators and $B$ operators.
Operator $B_1$ requires $y_4$, and only $A_1$ and $A_4$ use $y_4$, therefore $B_1$ must take the $y_4$ output from $A_1$ as its input and pass its resulting output to $A_4$.
Operator $B_1$ also requires $y_5$, which is used by $A_1$, $A_3$, and $A_4$. 
The communication options for the $y_5$ element of $B_1$ are therefore $A_1 \to B_1 \to (A_3, A_4)$ or $(A_1, A_3) \to B_1 \to A_4$, where one or both members of the tuples can be chosen.
If we choose $A_1 \to B_1 \to A_3$ for the $y_5$ output, operator $B_1$ must be calculated after the completion of $A_1$ and prior to the start of $A_3$ (because of $y_5$) and $A_4$ (because of $y_4$).
Similar analysis requires $B_2$ to be calculated after $A_1$ and before $A_2$, since $A_2$ is the last operator than can receive the output $y_3$ value from $B_2$.
Operator $B_3$ must be calculated between $A_3$ and $A_4$, because it will not have any input for $y_1$ until after $A_3$ is completed, and must provide its output to $A_4$, which is the last operator (and the last operator to take the subvectors in $\mathcal{K}^B_3$).

Here $\mathcal{K}^B_1=\{4,5\}$ satisfies our assumptions because, with $A_4$ as the last opportunity to provide outputs for its entries corresponding with $y_4$ and $y_5$, it has $A_1$ as the maximum index of the first opportunity to receive inputs for $y_4$ and $y_5$, and $A_1$ is prior to $A_4$.
In terms of the notation in Section \ref{cab:Sec:CABRA}, this means that $\bar{i}_1 = 1$ and $\ubar{i}_1 = 4$.
We can therefore select $i^*_1 \in \{1,2,3\}$.
Letting $i^*=3$, we have $\bar{s}^4_1 = 1$, since operator index 1 is last operator index less than or equal to $i^*_1$ in $\mathcal{I}_4$, and its index in $\mathcal{I}_4$ is 1.
We also have $\ubar{s}^4_4 = 2$, since the index of operator 4 in $\mathcal{I}_4$ is 2 (that is, $s(4,4) = 2)$.
For $B_1$ subvector $y_4$, we have an index of operator 1 in $\mathcal{J}_4$ of 1, so $t(1,4) = 1$.
Therefore, the first row of $K_4$ provides the $y_4$ input to $B_1$, and the first column of $Q_4$ receives the $y_4$ output from $B_1$.
Combining this with the cutoff requirements, for $K_4$ row 1 and $Q_4$ column 1 we must have $\bar{s}_{t(1, 4)} = \bar{s}_1 < 2$ and $\ubar{s}_{t(1, 4)} = \ubar{s}_1 > 1$.
Choosing $\bar{s}_1 = 1 < 2 = \ubar{s}_1$ then satisfies requirement \eqref{cab:cutoff_assumptions}.
This means that $K_4$ row 1 has zeros after column 1, and $Q_4$ column 1 must have zeros before row 2.
By the same logic, we have $\bar{s}^5_1 = 1$ and $\ubar{s}^5_1 = 3$.
Therefore, for $K_5$ row $1$, and $Q_5$ column $1$, we must have $\bar{s}_1 < 3$ and $\ubar{s}_1 > 1$.
Choosing $(\bar{s}_1, \ubar{s}_1) \in \{(1,2), (1,3), (2,3)\}$ then satisfies requirement \eqref{cab:cutoff_assumptions}.

Similarly, but more stringently, for $\mathcal{K}^B_3 = \{1, 4, 5\}$, we have $\bar{i}_3 = 3$ and $\ubar{i}_3 = 4$, so $i^*_3 = 3$.
This means that $s^1_3 = 1$, $s^4_3 = 1$, and $s^5_3 = 2$.
Therefore $(1,2)$ is not an option for $(\bar{s}_3, \ubar{s}_3)$ in $K_5$ and $Q_5$. 
Table \ref{tab:simple_example} provides a complete depiction of the relevant values for each $j \in \mathcal{J}$.
\begin{table}[h!]
\centering
\begin{tabular}{c c c c c c c c c c c}
\hline
$j$ & $k$ & $\mathcal{I}_k^{(1)}$ & $\mathcal{I}_k^{(n_k)}$ & $\bar{i}_j$ & $\ubar{i}_j$ & Valid $i^*_j$ & Valid $s^k_j$ \\ %
\hline
1 & 4  & 1  & 4  & 1  & 4  & $\{1,2,3\}$  & 1  \\ %
1 & 5  & 1  & 4  & 1  & 4  & $\{1,2,3\}$  & $\{1,2\}$  \\ %
2 & 3  & 1  & 2  & 1  & 2  & 1          & 1  \\ %
2 & 5  & 1  & 4  & 1  & 2  & 1          & 1  \\ %
3 & 1  & 3  & 4  & 3  & 4  & 3          & 1  \\ %
3 & 4  & 1  & 4  & 3  & 4  & 3          & 1  \\ %
3 & 5  & 1  & 4  & 3  & 4  & 3          & 2  \\
\hline
\end{tabular}
\caption{Illustration of index cutoff determination for $K$ and $Q$}
    \label{tab:simple_example}
\end{table}
By way of illustration, we note that the set $\mathcal{K}^B = \{1,3\}$ does not allow us to satisfy the requirements of \eqref{cab:cutoff_assumptions}, since, with $A_2$ as the last opportunity to provide outputs corresponding to $y_3$, we do not have a source of the required input corresponding to $y_1$ prior to $A_2$ (the earliest source is $A_3$).
This can be avoided by enlarging the domain of some set of operators to include additional subvectors if necessary.

Letting $\v{y}_{ki}$ be the copy of $y_k$ going to operator $A_i$, and $\ybk_{kj}$ be the copy of $y_k$ going to operator $B_j$, we have lifted variables
\begin{align}
    \v{y} &= (\v{y}_{13}, \v{y}_{14}, \v{y}_{22}, \v{y}_{23}, \v{y}_{31}, \v{y}_{32}, \v{y}_{41}, \v{y}_{44}, \v{y}_{51}, \v{y}_{53}, \v{y}_{54}) \\
    \yb &= (\ybk_{13}, \ybk_{32}, \ybk_{41}, \ybk_{43}, \ybk_{51}, \ybk_{52}, \ybk_{53}).
\end{align}
With $e_i$ giving the $i$-th standard basis vector, the permutation matrices corresponding to our permutation operators are then defined as 
\begin{align}
    P_A &= (e_6, e_9, e_4, e_7, e_1, e_5, e_2, e_{10}, e_3, e_8, e_{11}) \\
    P_B &= (e_5, e_3, e_1, e_6, e_2, e_4, e_7).
\end{align}
This means that, for $\v{x} = \PA \v{y}$ and $\xb = \PB \yb$, we have
\begin{align}
    \v{x} &= (\v{y}_{31}, \v{y}_{41}, \v{y}_{51}, \v{y}_{22}, \v{y}_{32}, \v{y}_{13}, \v{y}_{23}, \v{y}_{53}, \v{y}_{14}, \v{y}_{44}, \v{y}_{54}) \\
    \xb &= (\ybk_{41}, \ybk_{51}, \ybk_{32}, \ybk_{52}, \ybk_{13}, \ybk_{43}, \ybk_{53}).
\end{align}
With $\beta_j = 1$ for all $j \in \mathcal{J}$, one valid set of matrices would be
\begin{subequations}
    \label{cab:ex_mats}
\begin{align}
    W_1 = Z_1 &=& U_1 = &\begin{bmatrix}
        1 & -1 \\
        -1 & 1 
    \end{bmatrix}, & K_1 = &\begin{bmatrix}
        1 \\ 0 
    \end{bmatrix}^\intercal, & Q_1 = &\begin{bmatrix}
        0 \\ 1
    \end{bmatrix} \\
    W_2 = Z_2 &=& & \begin{bmatrix}
        0.5 & -0.5 \\
        -0.5 & 0.5 
    \end{bmatrix}, &  & &  & \\    
    W_3 = Z_3 &=& U_3 = &\begin{bmatrix}
        1 & -1 \\
        -1 & 1 
    \end{bmatrix}, & K_3 = &\begin{bmatrix}
        1 \\ 0 
    \end{bmatrix}^\intercal, & Q_3 = &\begin{bmatrix}
        0 \\ 1
    \end{bmatrix} \\
    W_4 = Z_4 &=& U_4 = &\begin{bmatrix}
        2 & -2 \\
        -2 & 2 
    \end{bmatrix}, & K_4 = &\begin{bmatrix}
        1 & 0 \\
        1 & 0
    \end{bmatrix}, & Q_4 = &\begin{bmatrix}
        0 & 0 \\
        1 & 1
    \end{bmatrix} \\
    W_5 = Z_5 &=& U_5 = &\begin{bmatrix}
       1.5 &    -0.5 &   -1.0\\
      -0.5 &     1.5 &   -1.0\\
      -1.0 &    -1.0 &    2.0
   \end{bmatrix}, & K_5 = &\begin{bmatrix}
        0.5 &     0.5 &     0.0\\
        1.0 &     0.0 &     0.0\\
        0.5 &     0.5 &     0.0
   \end{bmatrix}, & Q_5 = &\begin{bmatrix}
        0.0 &     0.0 &     0.0\\
        0.0 &     1.0 &     0.0\\
        1.0 &     0.0 &     1.0
   \end{bmatrix}.
\end{align}
\end{subequations}
This means our permuted diagonal matrices are 
    \begin{align*}
      (D_A)_1 &= \begin{bmatrix}
            1.0 &     0.0 &     0.0\\
            0.0 &     2.0 &     0.0\\
            0.0 &     0.0 &     1.5
       \end{bmatrix}, &
       (D_A)_2 = &\begin{bmatrix}
            0.5 &     0.0\\
            0.0 &     1.0
       \end{bmatrix}\\
       (D_A)_3 &= \begin{bmatrix}
            1.0 &     0.0 &     0.0\\
            0.0 &     0.5 &     0.0\\
            0.0 &     0.0 &     1.5
       \end{bmatrix}, &
       (D_A)_4 = &\begin{bmatrix}
            1.0 &     0.0 &     0.0\\
            0.0 &     2.0 &     0.0\\
            0.0 &     0.0 &     2.0
       \end{bmatrix}.
    \end{align*}
In Figure \ref{cab:fig:simple}, we provide a numerical test of CABRA using the matrices in \eqref{cab:ex_mats}, which we compare with Algorithm \ref{cab:afb_d_alg} using matrices
\begin{subequations}
    \label{cab:abra_mats}
    \begin{align}
        Z = W = U &= \begin{bmatrix}
           1.05 &   -0.35 &  -0.35 &  -0.35\\
          -0.35 &    1.05 &  -0.35 &  -0.35\\
          -0.35 &   -0.35 &   1.05 &  -0.35\\
          -0.35 &   -0.35 &  -0.35 &   1.05
        \end{bmatrix}, \\
        Q &= \begin{bmatrix}
            0.00 &  0.00 &    0.00\\
            0.55 &  0.00 &    0.00\\
            0.29 &  0.82 &    0.00\\
            0.16 &  0.18 &    1.00
        \end{bmatrix},
        K = \begin{bmatrix}
            1.00 &    0.00 &    0.00 &    0.00\\
            0.18 &    0.82 &    0.00 &    0.00\\
            0.16 &    0.29 &    0.55 &    0.00
        \end{bmatrix}.
    \end{align}
\end{subequations}

\subsubsection*{Problem description}
We now consider a problem with this structure.
We let $\Gk = \R^{200}$ for $k \in \mathcal{K}$. 
For $i$ in $\mathcal{I}$, let $A_i$ be the normal cone operator on the halfspace $c_i^\intercal x_i \geq w_i$.
The resolvents of these four maximal monotone operators are then projections into their respective halfspaces.
We let $B_j(x) = H_j x - h_j$, so that the three cocoercive operators are the derivatives of randomly generated quadratic functions $f_j(x) = 0.5 x^\intercal H_j x - h_j x$.
Each halfspace normal vector $c_i$ is selected independently from the random uniform distribution on $[-.5, .5]^{|\mathcal{K}^A_i| \times 200}$, and each constant $w_i$ is drawn from the random uniform distribution on $[0, 10]$.
We form $H_j \in \Sp^{|\mathcal{K}^B_j| \times 200}$ by selecting its eigenvalues $\Lambda_j$ independently from the random uniform distribution on $[0,1]^{|\mathcal{K}^B_j| \times 200}$ and then permuting them with a random orthogonal matrix $O_j$ selected uniformly from the Haar distribution \cite{stewart1980efficient} as $H_j = O_j \Lambda_j O_j^\intercal$.
This means that $\beta_k \geq \1$ for $k \in \mathcal{K}$.
We select each element of $h_j \in \R^{|\mathcal{K}^B_j| \times 200}$ independently from the random uniform distribution on $[-0.5, 0.5]$.

\subsubsection*{Test procedures}
We compare Algorithm \ref{cab:afb_d_alg} and CABRA, selecting $p=1$ matrix set using \eqref{cab:main_prob} for Algorithm \ref{cab:afb_d_alg}, and $p=5$ matrix sets for CABRA.
We let $c = 2(1 - \cos{\frac{\pi}{n_k}})$ and $\phi(Z) = \lambda_{\text{max}}(Z)$ for each matrix set, setting $W=Z$ (following the practice in \cite{aakerman2025splitting}).
For the execution of Algorithm \ref{cab:afb_d_alg} each resolvent argument and iterate lies in the entire product space, $\R^{1000}$.
We use the matrices in \eqref{cab:ex_mats} and \eqref{cab:abra_mats} for CABRA and Algorithm \ref{cab:afb_d_alg}, respectively.
We let $\alpha = 2$ and $\gamma = 0.95$.
We test each algorithm over 20 instances of the problem, displaying the results for each run in the background, and the mean over the runs in bold. 
We determine the reference values of $f(x) = \sum_{j=1}^m f_j(x)$ at optimality here (and throughout this section) using MOSEK \cite{mosek}.

In Figure \ref{cab:fig:simple} we see that CABRA converges to optimality and feasibility faster than Algorithm \ref{cab:afb_d_alg}, as might be expected since each resolvent on $A_i$ in Algorithm \ref{cab:afb_d_alg} has subvectors of $(\v{L}_A \v{x})_i$ which it passes through unmodified.
Clearly, with $v_1, v_3, v_4 \in \R^{600}$ and $v_2 \in \R^{400}$, Algorithm \ref{cab:cabra_d_alg} requires less memory than Algorithm \ref{cab:afb_d_alg}, which has $v_1, v_2, v_3, v_4 \in \R^{1000}$.

\begin{figure}[H]
    \centering
    \subfloat[Quadratic optimality gap]{
    \centering        
    \includegraphics[width=0.45\textwidth]{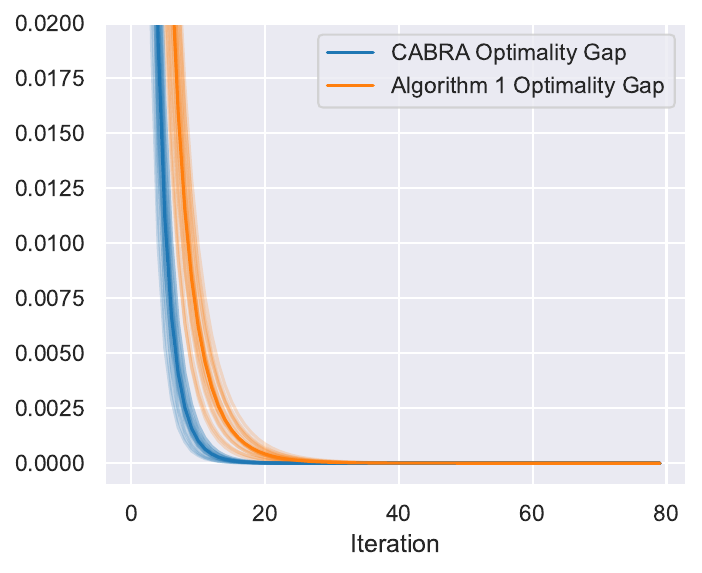}
    \label{cab:fig:simple_rfs}
}
\hfil
\subfloat[Constraint Violation]{
    \centering        
    \includegraphics[width=0.45\textwidth]{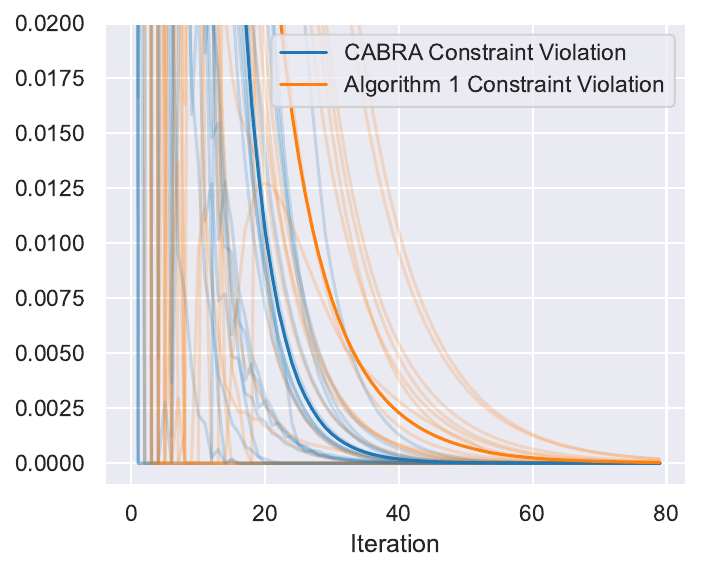}
    \label{cab:fig:simple_devs}
}
\caption{Comparison of CABRA and Algorithm \ref{cab:afb_d_alg}}
\label{cab:fig:simple}
\end{figure}

\subsection{Diagonal Scaling}
We now provide a small example of the potential use of these matrix parameters for diagonally scaling the operators.
Consider the monotone inclusion with $n=2$, $m=0$, $p=2$, and $\Gk = \R$ given by
\begin{equation}
\label{cab:scaling_ex_1}\text{find} \;x \in \R^2 : 0 \in N_1(x) + N_2(x)
\end{equation}
where $N_1$ and $N_2$ are the normal cones of the halfspaces defined by $0.05x_1 - x_2 \geq 2$ and $0.05x_1 + x_2 \geq 2$, respectively.
We let $\mathcal{K}^A_1 = \mathcal{K}^A_2 = \mathcal{K}$.
Choosing \[Z_1 = Z_2 = W_1 = W_2 = \begin{bmatrix}
    1 & -1 \\ -1 & 1
\end{bmatrix},\] initial values $v_1^0 = v_2^0 = (0,0)$, and $\gamma = 2$ for Algorithm \ref{cab:cabra_n_alg}, we get the sequence of iterates $x_1^k$ and $x_2^k$ given by Figure \ref{cab:fig:precond_vanilla}.
If we instead scale $Z_1$ and $W_1$ by the square of the coefficient for $x_1$ in the halfspace constraints, we get \[(D_A)_1 = (D_A)_2 = \begin{bmatrix}
    0.0025 & 0 \\ 0 & 1
\end{bmatrix}.\]
Using these scaled matrix parameters and the previous values for $\gamma$ and $\v{v}^0$, we get the sequence of iterates $x_1^k$ and $x_2^k$ given by Figure \ref{cab:fig:precond_scaled} where we converge in two steps rather than 16.
\begin{figure}[H]
    \centering
    \subfloat[Identical Matrices]{
    \centering        
    \includegraphics[width=0.45\textwidth]{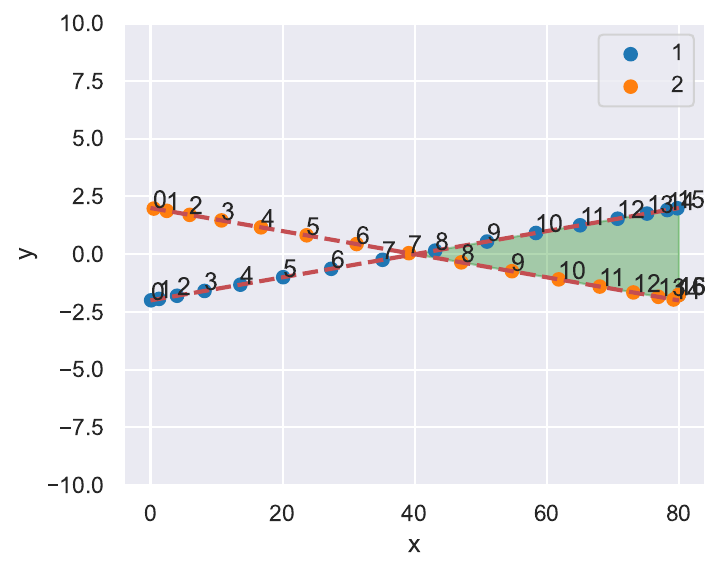}
    \label{cab:fig:precond_vanilla}
}
\hfil
\subfloat[Scaled Matrices]{
    \centering        
    \includegraphics[width=0.45\textwidth]{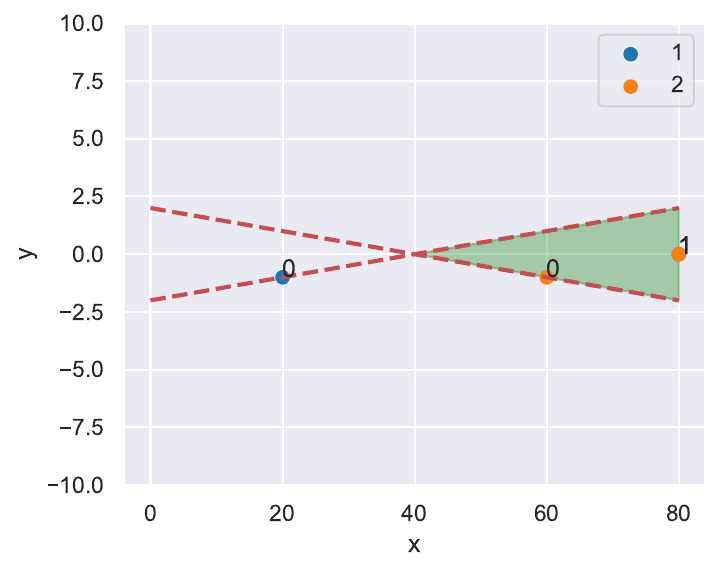}
    \label{cab:fig:precond_scaled}
}
\caption[Preconditioning demonstration]{Preconditioning demonstration. The iterate number is displayed above the points, with the results of resolvent 1 in blue and those of resolvent 2 in orange.
Note that, despite $x_1^8$ and $x_2^8$ being in the valid region in Figure \ref{cab:fig:precond_vanilla}, the algorithm does not converge until iteration 16, because the subgradient portion of $v_1$ and $v_2$ has grown in a way that does not sum to zero, and the subgradients have to even out (to zero).
}
\label{cab:fig:precon}
\end{figure}
If we make the halfspace boundaries even more parallel, we can make the convergence of the unscaled matrices arbitrarily slow, while as long as the scaling is sufficiently high, we can achieve two-step convergence with the scaled matrices.

We now apply the same principle to larger problems, including a much larger version of \eqref{cab:scaling_ex_1}, a quadratic problem, and the problem of minimizing the sum of a number of quadratic functions subject to a set of halfspace constraints.

\subsubsection{Halfspace Projection}
\label{cab:half_example}
We begin by extending example \eqref{cab:scaling_ex_1} to $n=30$ and $p=200$, leaving $m=0$ for the time being.
As in \eqref{cab:scaling_ex_1}, $\mathcal{K}^A_i = \mathcal{K}$ for all $i \in \mathcal{I}$ and $\Gk = \R$.
In each trial, we select halfspaces which are close to parallel by selecting a random vector $c$ uniformly in $[0,2]^p$ and then forming the first half of the halfspace normal vectors by adding $c$ to $\floor*{\frac{n}{2}}$ random vectors selected uniformly in $[0,.1]^p$. 
We then form the last half of the normal vectors by subtracting $c$ from $n-\floor*{\frac{n}{2}}$ random vectors selected uniformly in $[0,.1]^p$. 
We select the constants $v_i$ randomly in $[0,1]$.
We scale by forming $d_k = (c_{ik}^2)_{i \in \mathcal{I}}$ for each $k \in \mathcal{K}$ and then using the Sinkhorn-Knopp Algorithm \cite{sinkhorn1967concerning} as described in \cite{barkley2025decentralizedsensornetworklocalization} to form a symmetric matrix $X_k$ with zeros on the diagonal and row and columns sums equal to $d_k$.
We let $Z_k = W_k = \diag(d_k) - X_k$.
We compare our scaled matrix parameters with the ``uniform" weight matrix parameters given by 
\begin{equation}
    \label{cab:uniform_mats}
Z_k = W_k = \frac{n}{n-1} \I - \frac{1}{n-1}\1 \1^\intercal.
\end{equation}

\begin{figure}[H]
    \centering
    \subfloat[Constraint violation]{
    \centering        
    \includegraphics[width=0.45\textwidth]{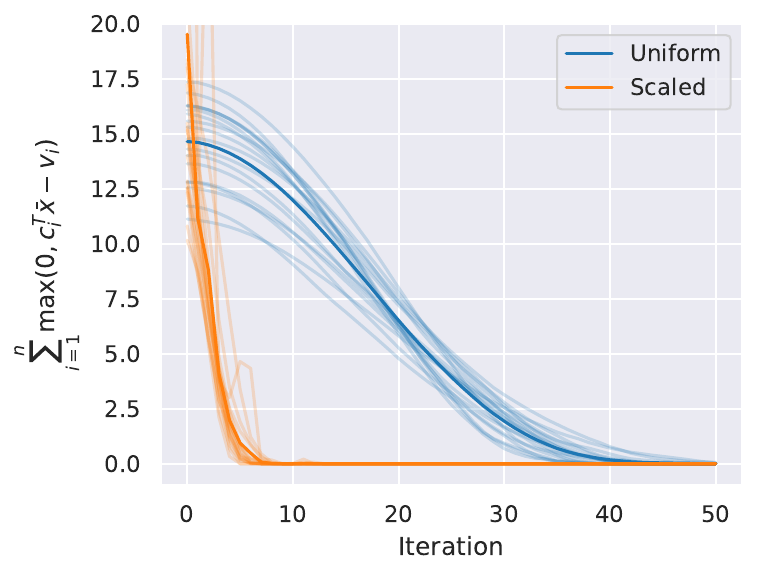}
    \label{cab:fig:parallel_half_mean_violation}
}
\hfil
\subfloat[Sum of the squared distances]{
    \centering        
    \includegraphics[width=0.45\textwidth]{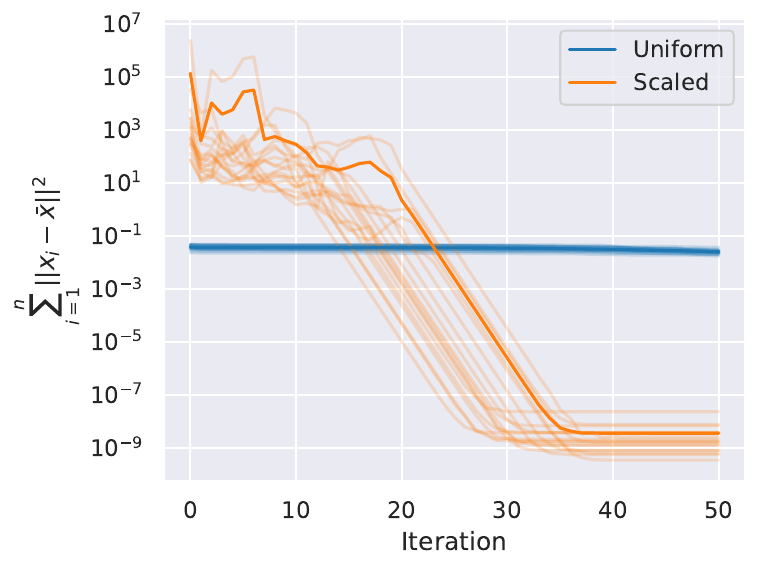}
    \label{cab:fig:parallel_half_mean_dev}
}
\caption[Diagonal scaling on halfspace projections]{Diagonal scaling on halfspace projections. We see that in Figure \ref{cab:fig:parallel_half_mean_violation} that the sum of the constraint violation at the mean value $\bar{x} = \frac{1}{n} \sum_{i=1}^n x_i$ drops much faster for the scaled matrices than the uniform matrices.
Figure \ref{cab:fig:parallel_half_mean_dev} shows that this comes with a much larger initial sum of the squared distances from the mean, but that once the constraint violation is minimized, this sum of the squared distances quickly approaches zero. }
\label{cab:fig:halfspace}
\end{figure}

Figure \ref{cab:fig:halfspace} shows the results of 20 trials, with the mean for each matrix parameter type in bold and each trial displayed separately in the background.

\subsubsection{Quadratic Resolvents}
\label{cab:quad_example}
Next, we let $A_i(x) = H_i x - h_i$ for $H_i \in \Sp^p$ and $h_i \in \R^p$, with $\mathcal{K}^A_i = \mathcal{K}$ for all $i \in \mathcal{I}$ and $\Gk = \R$ as before.
In this case, we scale by making $(Z_k)_{ii}$ similar to $(H_i)_{kk} + |h_{ik}|$ for each $k \in \mathcal{K}$ and each $i \in \mathcal{I}$.
We then let $W_k = Z_k$.
We generate $H_i$ by selecting $X_i \in \R^{p \times p}$ from a random uniform distribution on $[0, 1]^{p \times p}$ and setting $H_i = X_i^\intercal X_i$. %
We generate the data by selecting a central location $c \in \R^p$ from a random uniform distribution on $[0,1]^p$, and then select a set of random displacement vectors $r_i \in \R^p$ from a random uniform distribution on $[-.5,.5]^p$, forming $h_i = c + r_i$.
We scale via Sinkhorn-Knopp as before.
\begin{figure}[H]
    \centering
    \subfloat[Quadratic optimality gap]{
    \centering        
    \includegraphics[width=0.45\textwidth]{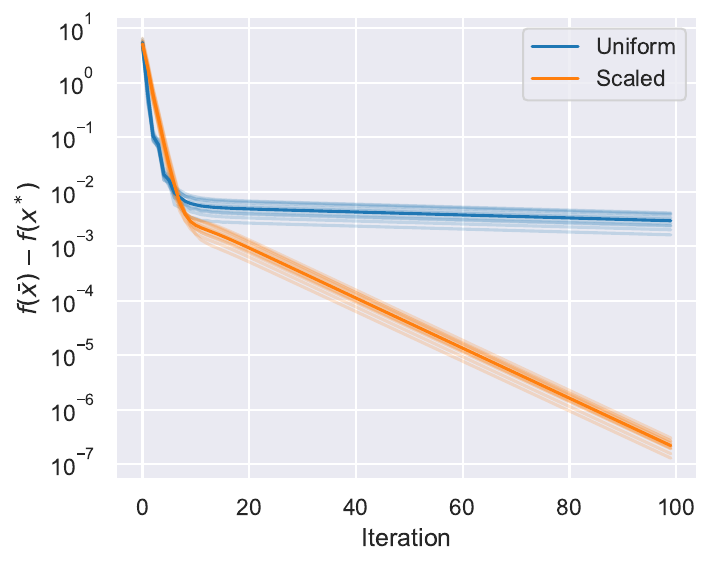}
    \label{cab:fig:quad_mean_gap}
}
\hfil
\subfloat[Sum of the squared distances]{
    \centering        
    \includegraphics[width=0.45\textwidth]{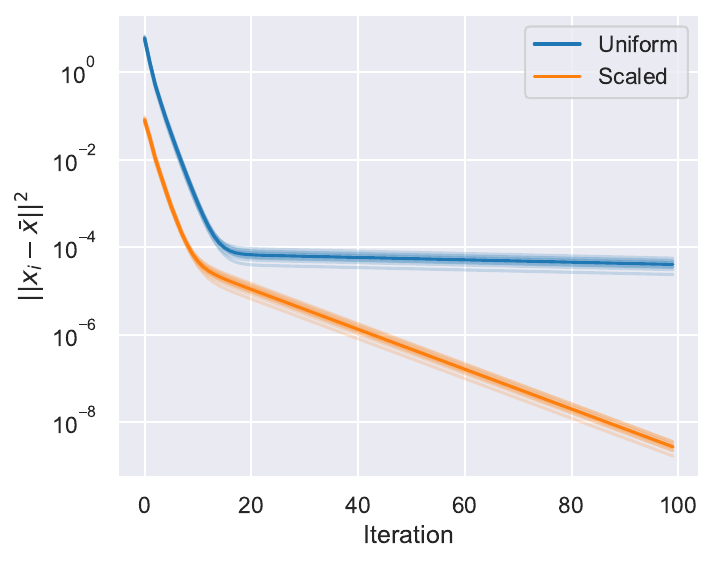}
    \label{cab:fig:quad_mean_dev}
}
\caption[Diagonal scaling on quadratic operators]{Diagonal scaling on quadratic operators. Scaled matrices reduce the optimality gap and the sum of the squared distance from the mean $x$ iterate faster than uniform matrices.}
\label{cab:fig:quad}
\end{figure}
Figure \ref{cab:fig:quad} shows the results of ten trials with $p=100$, $n=20$, $\alpha=0.5$, and $\gamma=1.75$.
We compare the scaled algorithm with the uniform weight matrices in \eqref{cab:uniform_mats}.
We again see much faster convergence and reduction in the sum of the squared distances from the mean.

\subsubsection{Halfspace Projection with Quadratic Cocoercive Operators}
We now combine the two scenarios, with halfspace projections for each $A_i$, so $A_i = N_i$, and quadratic operators as the cocoercive operators, with $B_j(x) = H_j x - h_j$.
We let $\mathcal{K}^A_i = \mathcal{K}$ and $\Gk = \R$ as before. %
We generate $c_i$, $v_i$, $H_j$, and $h_j$ as in \ref{cab:half_example} and \ref{cab:quad_example}, but scale each $H_j$ so that its maximum eigenvalue is one.
We generate the uniform matrices as described in Example \ref{cab:simple_example}, which mirrors the most successful approach in \cite{aakerman2025splitting}.
We generate the scaled matrix parameters using \eqref{cab:main_prob} with $c = 2(1-\cos(\frac{\pi}{n}))$, $s^k_j = j$ for all $k \in \mathcal{K}$, and $\phi(Z_k, Q_k) = \norm{d^Z_k - d^A_k - Q d^B_k}_2 + 0.1\norm{Z_k}_2$ where $d^Z_k$ is a vector with the diagonal of $Z_k$, $d^A_k = 15\alpha (c_{ik}^2)_{i \in \mathcal{I}}$, and $d^B_k = \alpha(|h_{jk}|+(H_j)_{kk})_{j \in \mathcal{J}}$.

Figure \ref{cab:fig:quadhalf} shows the results of ten trials with $n=16$, $m=15$, $p=10$, $\alpha=0.25$, and $\gamma=1.85$.
The scaled matrices outperform the uniform matrices in convergence to optimality, constraint violation, and convergence of the iterates.

\begin{figure}[H]
    \centering
    \subfloat[Objective function gap]{
    \centering        
    \includegraphics[width=0.45\textwidth]{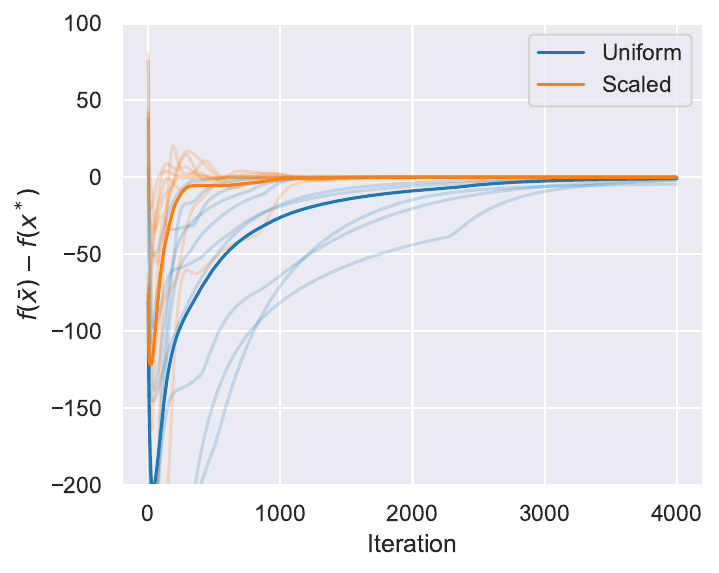}
    \label{cab:fig:quadhalf_gap}
}
\hfil
\subfloat[Sum of the squared distances]{
    \centering        
    \includegraphics[width=0.45\textwidth]{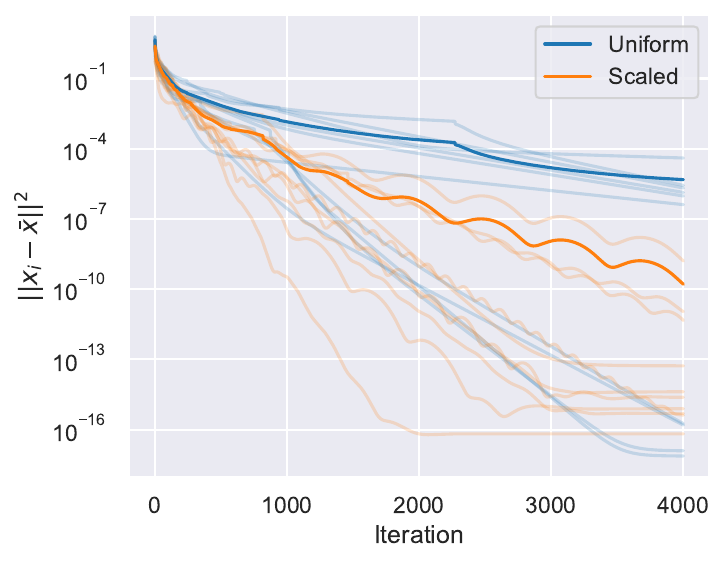}
    \label{cab:fig:quadhalf_variance}
}

\subfloat[Constraint violation]{
    \centering        
    \includegraphics[width=0.45\textwidth]{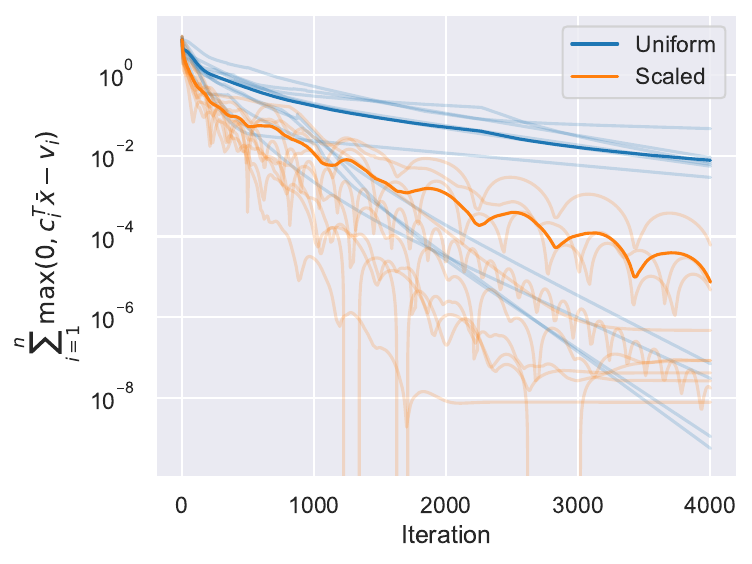}
    \label{cab:fig:quadhalf_violation}
}
\caption{Diagonal scaling on halfspace projections with quadratic operators}
\label{cab:fig:quadhalf}
\end{figure}

\subsection{Stochastic Weapon Target Allocation}
We now consider a multi-stage stochastic version of the relaxed weapon target assignment problem \cite{manne1958target} with a finite set of scenarios $\mathcal{S}$, stages $\mathcal{T}$, targets $\mathcal{E}$, and weapons $\mathcal{W}$.
The set $\mathcal{X}_t$ defines the branches of the scenario tree at stage $t$, and the subset $\mathcal{S}_{bt} \subseteq \mathcal{S}$ defines the scenarios in branch $b$ over which we enforce equality for stage $t$.
Let \[\mathcal{D} = \{(i,j,t,b) : (i,j,t) \in \mathcal{W} \times \mathcal{E} \times \mathcal{T},\, b \in \mathcal{X}_t\}.\]
We require the following problem parameters and variables:
\begin{align*}
    w_s &\in (0,1) \quad &&\forall s \in \mathcal{S}&& \text{probability of scenario}\, s\\
    V_j &\in (0,1) \quad &&\forall j \in \mathcal{E}&& \text{value of target}\, j\\
    b_{st} &\in \mathcal{X}_t \quad &&\forall s,t \in \mathcal{S} \times \mathcal{T}&& \text{branch}\, b \,\text{such that}\, s \in \mathcal{S}_{bt} \\
    p_{ijst} &\in (0,1) \quad &&\forall i,j,s,t \in \mathcal{W} \times \mathcal{E} \times \mathcal{S} \times \mathcal{T} && \text{damage parameter for target}\, j \,\text{by weapon}\, i \\
    &&&&& \text{in scenario}\, s \,\text{and stage} \, t\\
    x_{ijtb} &\in \R \quad &&\forall i,j,t,b \in \mathcal{D} & &\text{amount of weapon}\, i \, \text{to use in stage}\, t \\
    &&&&& \text{against target}\, j \,\text{ in all scenarios}\, s \in \mathcal{S}_{bt}  
\end{align*}
\noindent Let $q_{ijst} = -\log(1 - p_{ijst})$.
Let $x \in \bigoplus_{t \in \mathcal{T}}(\bigoplus_{b \in \mathcal{X}_t}\R^{|\mathcal{W}| \times |\mathcal{E}|})$.
The relaxation of the multi-stage stochastic weapon target assignment problem (a simplification of that in \cite{li2016wta}) is then given by
\begin{subequations}
    \label{cab:rswta}
\begin{align}
    \minimize_{x} \quad & \sum_{s \in \mathcal{S}} w_s \sum_{j \in \mathcal{E}} V_j \exp\left(-\sum_{i \in \mathcal{W}} \sum_{t \in \mathcal{T}}q_{ijst}x_{ijt, b_{st}}\right) \\
        \text{s.t.} \quad & x_{ijtb} \geq 0 \quad\forall i,j,t,b \in \mathcal{D}\\
        & \sum_{t \in \mathcal{T}}\sum_{j \in \mathcal{E}} x_{ijt, b_{st}} \leq 1 \quad \forall i,s \in \mathcal{W} \times \mathcal{S} .
\end{align}
\end{subequations}
Define 
\begin{subequations}
\begin{align}
    R_{s}(x) &= (x_{ijt, b_{st}})_{i \in \mathcal{W}, j \in \mathcal{E}, t \in \mathcal{T}} \quad &\forall s \in \mathcal{S} \\
    \iota_0 (x) &= \begin{cases}
        0 \quad & \text{if} \; x \geq 0 \\
        \infty \quad & \text{otherwise}
    \end{cases}&\\
    \iota_1 (R_{s}x) &= \begin{cases}
        0 \quad & \text{if} \; \sum_{j \in \mathcal{E}}\sum_{t \in \mathcal{T}} x_{ijt, b_{st}} \leq 1 \quad \forall i \in \mathcal{W}\\
        \infty \quad & \text{otherwise}
    \end{cases}\quad&\forall s \in \mathcal{S} \\
    f_{s}(R_{s} x) &= w_s \sum_{j \in \mathcal{E}} V_j \exp(-\sum_{i \in \mathcal{W}}\sum_{t \in \mathcal{T}}q_{ijst}x_{ijt, b_{st}})  \quad& \forall s \in \mathcal{S} .
\end{align}
\end{subequations}
Problem \eqref{cab:rswta} can then be rewritten as 
\begin{equation}
    \label{cab:wta_min_ind}
    \min_x \, \iota_0(x) + \sum_{s \in \mathcal{S}}\iota_1(R_{s}x) + \sum_{s \in \mathcal{S}}f_{s}(R_{s} x).
\end{equation}
We let $N_0$ and $N_1$ be the normal cones of $\iota_0$ and $\iota_1$, respectively, and $\nabla f_{s}$ be the derivative of $f_{s}$.
Noting that a minimizer exists for \eqref{cab:rswta} for all positive scaling parameters $\tau$, both \eqref{cab:rswta} and \eqref{cab:wta_min_ind} are equivalent to
\begin{equation}
    \label{cab:wta_zer_sum}
    \text{find}\; x \;\text{s.t.}\; 0 \in N_0(x) + \sum_{s \in \mathcal{S}}(R_{s})^* N_1(R_{s}x) + \tau\sum_{s \in \mathcal{S}} (R_{s})^* \nabla f_{s}(R_{s} x).
\end{equation}
We let $\Gk = \R^{|\mathcal{W}| \times |\mathcal{E}|}$ for $k \in \{(t,b) | t \in \mathcal{T}, b \in \mathcal{X}_T\}$, and $x_{t b} = (x_{ijtb})_{i,j \in \mathcal{W} \times \mathcal{E}}\in \Gk$ be the subvector of $x$ corresponding to stage $t$ and branch $b$.
Note that each subvector $x_{t b}$ is an argument in each $N_1$ operator associated with a scenario in $\mathcal{S}_{bt}$, and in $N_0$, so that for $k = (t,b)$, $n_k = |\mathcal{S}_{bt}| + 1$. 
Each subvector $x_{t b}$ is also an argument in $|\mathcal{S}_{bt}|$ derivative operators $\nabla f_{s}$, so that for $k = (t,b)$, $m_k = |\mathcal{S}_{bt}| + 1$. 
We also note that the resolvents on $N_0$ and on each $N_1(R_s)$ are fully separable across platforms.

Since the output of the resolvent on $N_0$ is non-negative, we structure the splitting to provide the output value of $N_0$ as the argument to each $\nabla f_{s}$.
On this non-negative domain, $\nabla f_{s}$ is $(w_s \max_{j \in \mathcal{E}}V_j \norm{q_{js}})^{-1}$-cocoercive where $q_{js} = (q_{ijst})_{i,t \in \mathcal{W} \times \mathcal{T}}$.
We set $\tau = \left(w_s \max_{js} (V_j \norm{q_{js}})\right)^{-1}$, which rescales the problem so that $\beta_{s} \geq 1$ for all derivatives.
We can then formulate a parallelizable splitting over the $|W|$ weapons platforms with the following matrix parameters for each element of $x_{\cdot \cdot t b} \in \R^{|\mathcal{W}| \times |\mathcal{E}|}$:
\begin{subequations}
    \label{cab:matrices}
\begin{align}
    Z_{ t b} = W_{ t b} &\in \Sp^{1 + |\mathcal{S}_{bt}|} & Z_{tb} = W_{tb} &= \begin{bmatrix}
        |S_{bt}| & -\1^\intercal \\
        -\1      & \I 
    \end{bmatrix}\\
    K_{ tb} & \in \R^{|\mathcal{S}_{bt}| \times (1 + |\mathcal{S}_{bt}|)} & K_{tb} &= \begin{bmatrix}
        \1 & \mathbf{0} \mathbf{0}^\intercal 
    \end{bmatrix}\\
    Q_{ tb} & \in \R^{(1 + |\mathcal{S}_{bt}|) \times |\mathcal{S}_{bt}| } & Q_{tb} &= \begin{bmatrix}
        \mathbf{0}^\intercal \\ \frac{1}{|\mathcal{S}_{bt}|}\1 \1^\intercal
    \end{bmatrix}\\
    U_{ tb} &\in \Sp^{1 + |\mathcal{S}_{bt}|} & U_{tb} &= \begin{bmatrix}
        |S_{bt}| & -\1^\intercal \\
        -\1      & \frac{1}{|\mathcal{S}_{bt}|}\1 \1^\intercal
    \end{bmatrix}.
\end{align}
\end{subequations}
These matrices satisfy \eqref{cab:afb_reqs}.
Using them, we can find the resolvent of $N_0$ in parallel over the $|\mathcal{W}|$ platforms (by the separability of $N_0$).
Each platform $i$ can then find $\sum_{t \in \mathcal{T}} q_{ijst}x_{ijt, b_{st}}$ for all $j, s$, and provide it to the other platforms $i'$ such that $q_{i'jst} \neq 0$.
Upon receiving the corresponding transmission from the other platforms, each platform can calculate $(\nabla f_{s})_{is}$ for all $s$, and then compute the $s$ resolvents of $N_1(R_{s}x)$ corresponding to platform $i$.
These $N_1$ resolvents are fully separable by platform as well.
Each platform then updates $\v{v}$ independently for each resolvent and proceeds to the next iteration.
In this formulation, only one communication is required between platforms in each iteration, and that communication is reduced to only $|\mathcal{E} \times \mathcal{S}|$ scalars, and can be even smaller when some platforms have $q_{ijst} = 0$ for some $j$ and $s$.

\begin{figure}[H]
    \centering
    \subfloat[Objective function gap]{
    \centering        
    \includegraphics[width=0.45\textwidth]{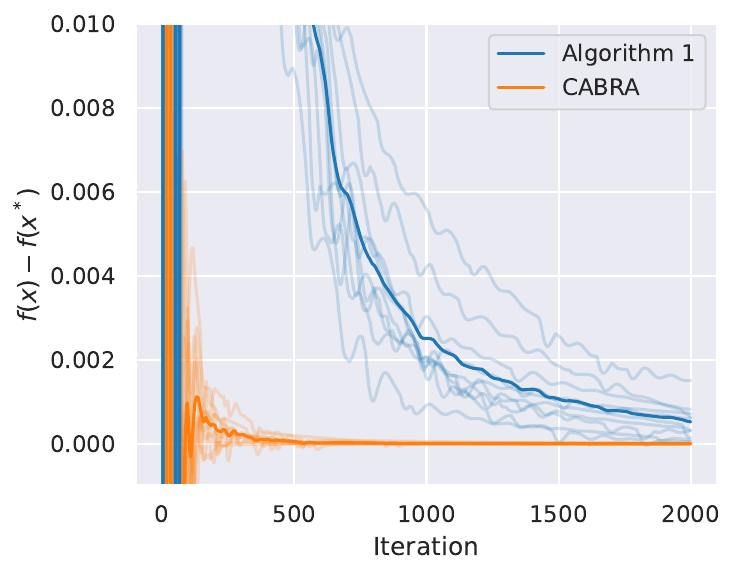}
    \label{cab:fig:swta_gap}
}
\hfil
\subfloat[Sum of the squared distances]{
    \centering        
    \includegraphics[width=0.45\textwidth]{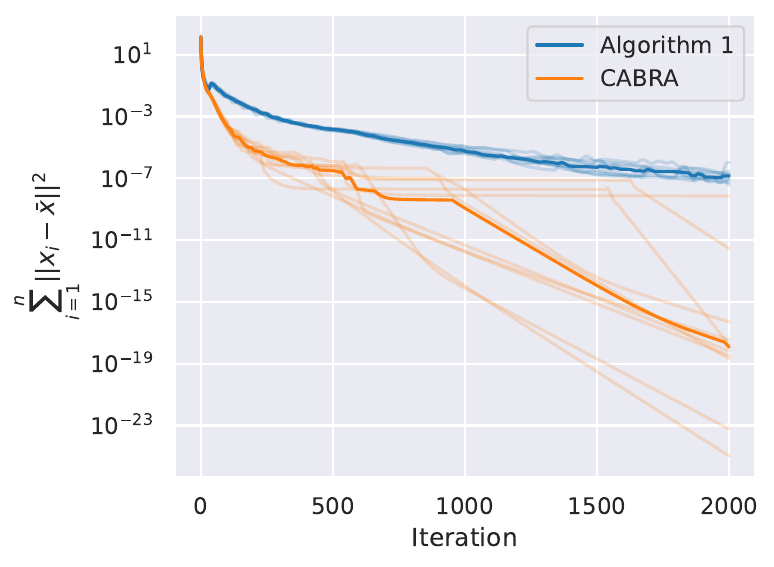}
    \label{cab:fig:swta_variance}
}

\subfloat[Constraint violation]{
    \centering        
    \includegraphics[width=0.45\textwidth]{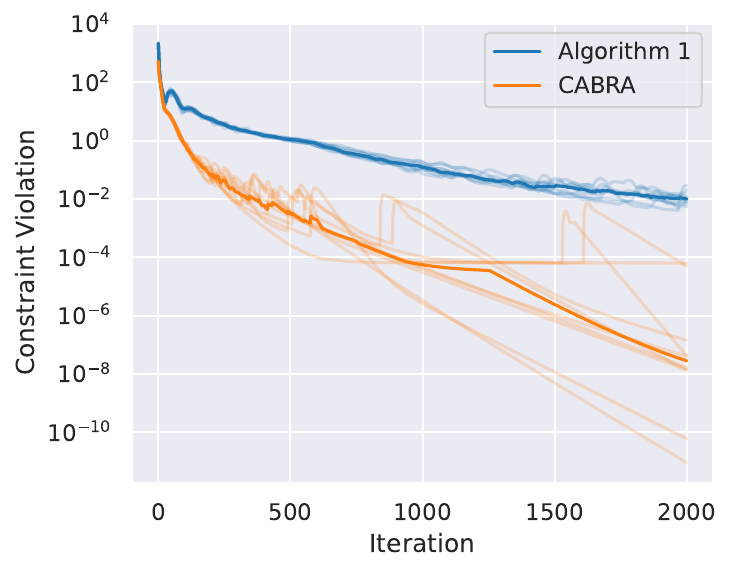}
    \label{cab:fig:swta_violation}
}
\caption{Multi-stage stochastic weapon target assignment results}
\label{cab:fig:swta}
\end{figure}
We test the CABRA splitting of \eqref{cab:wta_zer_sum} by comparing it with the splitting given by Algorithm \ref{cab:afb_d_alg} with $Z$, $W$, $K$, and $Q$ given by \eqref{cab:matrices} with $|\mathcal{S}_{bt}| = |\mathcal{S}|$.
We generate problem test data for 10 weapons platforms, 20 targets, and a scenario tree over three stages and 100 scenarios, with $\mathcal{X}_1 = \{ 1 \}$, $\mathcal{X}_2 = \{ 1, \dots, 10 \}$, and $\mathcal{X}_3 = \{ 1, \dots, 100 \}$.
We let $V = \1_{20}$, $w_s = \frac{1}{100}$, and choose $q_{st} \in \R_{++}^{10 \times 20}$ as $q_{s1} = - \log(.2 + .7 X_{b(s,2)})$ where each $X_{b}$ is selected from a random uniform distribution on $[0,1]^{10 \times 20}$ for $b \in \mathcal{X}_2$.
For $t \in \{2,3\}$, we select $q_{st} = -\log(.22 + .68 X_{b(s,2)} + Y_{st})$ where $Y_{st} \in [-.01t,.01t]^{10 \times 20}$.
We conduct ten trials with $\gamma = 1$ and $\alpha = 10$, having manually evaluated $\alpha$ parameters and determined that $\alpha = 10$ speeds convergence (for both algorithms) despite being beyond the proven set of valid parameters.
Figure \ref{cab:fig:swta} displays the results. 
We see that CABRA outperforms Algorithm \ref{cab:afb_d_alg} on every measure.

\section{Conclusion}

This research extends the matrix-parametrized (adaptable) forward-backward splitting framework with minimal lifting by developing CABRA, an algorithm which finds a zero in the sum of maximal monotone and cocoercive operators composed with selection operators.
It also offers a framework for selecting valid matrix parameters, and demonstrates its use for parallelization and potential for preconditioning.
We apply CABRA to the continuous relaxation of the multi-stage stochastic weapon target assignment problem, and show that it can be chosen to split over weapons platforms and converges faster than Algorithm \ref{cab:afb_d_alg}, its counterpart which does not account for the structure of the selection operators.

The CABRA algorithm, and the matrix parameter selection framework, offer a number of promising directions for future research.
Extending it to include arbitrary linear compositions appears particularly promising.
We also believe that further research into optimal preconditioning matrices which balance convergence rate and parallelism will prove fruitful.

\noindent\textbf{Acknowledgements} Both authors acknowledge support from Office of Naval Research awards N0001425WX00069 and N0001425GI01512.

\bibliography{ch4/files/thebib}%

\appendix 
\section{Appendix}
\label{cab:Appendix}

\subsection{Supporting Lemmas}

\begin{lemma}
    \label{cab:Lem:nullspace}
    The set $\mathcal{N}_A$ is the null space of $\v{M}_A$, $\v{W}_A$, and $\v{Z}_A$.
\end{lemma}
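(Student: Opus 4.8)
The plan is to show that all three operators $\v{M}_A$, $\v{W}_A$, and $\v{Z}_A$ have the common null space $\mathcal{N}_A = \range(\RA)$, exploiting that each is the fixed block-diagonal lifted matrix $\v{M}$, $\v{W}$, or $\v{Z}$ conjugated by the permutation operator $\PA$ (with $\v{M}_A = \v{M}\PAs$ carrying no left factor), together with the fact that $\v{M}_k$, $\v{W}_k$, and $\v{Z}_k$ all share the null space $\{\1_{n_k}\otimes h : h \in \GG_k\}$.

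First I would reduce to a statement about $\v{M}$, $\v{W}$, $\v{Z}$ alone. Since $\PA$ is injective (being a permutation with $\PAs = \PA^{-1}$), $\nullspace(\PA\,\v{W}\,\PAs) = \nullspace(\v{W}\,\PAs)$, and since $\PAs$ is a bijection $\Hx \to \Hy$, we get $\v{x} \in \nullspace(\v{W}_A)$ if and only if $\PAs\v{x} \in \nullspace(\v{W})$; the identical reduction applies to $\v{Z}_A$ and, even more directly, to $\v{M}_A = \v{M}\PAs$. So it suffices to prove that $\v{x} \in \mathcal{N}_A$ if and only if $\PAs\v{x}$ lies in $\nullspace(\v{W}) = \nullspace(\v{Z}) = \nullspace(\v{M})$, and to identify this common null space.

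Next I would identify these null spaces explicitly. Since $\v{W} = \bigoplus_k \v{W}_k$ is block diagonal, $\nullspace(\v{W}) = \bigoplus_k \nullspace(\v{W}_k)$ with $\v{W}_k = W_k \otimes \I$. For any real matrix $N \in \R^{r\times n'}$ one has $\nullspace(N\otimes\I) = \{\sum_l v_l \otimes e_l : v_l \in \nullspace(N)\}$ on $\GG_k^{n'}$: this follows by expanding an element $(h_1,\dots,h_{n'})$ against an orthonormal basis $e_1,\dots,e_q$ of the finite-dimensional span of its components and noting that $\sum_j N_{ij}h_j = \sum_l\big(\sum_j N_{ij}c_{jl}\big)e_l$ vanishes for all $i$ exactly when each coordinate vector $(c_{jl})_j$ lies in $\nullspace(N)$. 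In particular, when $\nullspace(N) = \text{span}(\1_{n'})$ this null space collapses to $\{\1_{n'}\otimes h : h \in \GG_k\}$. Applying this with $N = W_k$ (via \eqref{cab:W_connect}), $N = Z_k$ (via \eqref{cab:Z_connect}), and $N = M_k$ (via \eqref{cab:M_connect}) gives $\nullspace(\v{W}_k) = \nullspace(\v{Z}_k) = \nullspace(\v{M}_k) = \{\1_{n_k}\otimes h : h \in \GG_k\}$, hence $\nullspace(\v{W}) = \nullspace(\v{Z}) = \nullspace(\v{M})$ is precisely the set of $\v{y}\in\Hy$ with $\v{y}_k = \1_{n_k}\otimes y_k$ for some $y_k\in\GG_k$, for every $k$.

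Finally I would close the loop. For $\v{x}\in\Hx$, writing $\v{y} = \PAs\v{x}$, the condition ``$\v{y}_k = \1_{n_k}\otimes y_k$ for all $k$'' is exactly the statement $\v{y} = \v{y}_\1$ for $y = (y_k)_k \in \Gy$; by \eqref{cab:PtoR} we have $\PA\v{y}_\1 = \RA y$, so $\v{x} = \RA y \in \mathcal{N}_A$, and conversely if $\v{x} = \RA y$ then $\PAs\v{x} = \v{y}_\1$ has the required block structure. Combining with the reductions above yields $\nullspace(\v{M}_A) = \nullspace(\v{W}_A) = \nullspace(\v{Z}_A) = \mathcal{N}_A$. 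The only nonroutine point is the elementary identity $\nullspace(N\otimes\I) = \{\sum_l v_l\otimes e_l : v_l\in\nullspace(N)\}$ in the possibly infinite-dimensional Hilbert-space setting, which I would handle by the finite-span argument sketched above; the remainder is bookkeeping with $\PA$, $\PAs$ and the definitions in \eqref{cab:cabra_reqs}.
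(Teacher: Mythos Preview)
Your proposal is correct and follows essentially the same approach as the paper: both reduce via the bijection $\PAs$ to the block-diagonal operators $\v{M}$, $\v{W}$, $\v{Z}$, invoke $\nullspace(M_k)=\nullspace(W_k)=\nullspace(Z_k)=\text{span}(\1_{n_k})$, and identify the resulting set with $\{\v{y}_\1 : y\in\Gy\}=\PAs(\mathcal{N}_A)$. Your treatment is slightly more careful in spelling out the identity $\nullspace(N\otimes\I)=\{\1\otimes h\}$ via the finite-span argument, which the paper leaves implicit, but the underlying logic is the same.
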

\begin{proof}
    For any $\v{x} \in \mathcal{N}_A$, we know that $\PAs\v{x} = \left(\1 \otimes y_k\right)_{k \in \mathcal{K}}$ for subvectors $y_k \in \Gk$. 
    This follows from the construction of $\PA$ and the definition of $\mathcal{N}_A$.
    By the requirements of \eqref{cab:afb_reqs} on $M_k$, $W_k$, and $Z_k$, 
    \[
    M_k \1 = W_k \1 = Z_k \1 = 0.
    \]
    Therefore for $y \in \Gy$ and $\v{x} = \RA y \in \mathcal{N}_A$, and $\v{y}_{\1} = \PAs\v{x} = (\1 \otimes y_k)_{k \in \mathcal{K}}$, 
    \begin{align*}
    \v{W}_A\v{x} = \PA \v{W} \PAs \v{x} = \PA \left(W_k \1 \otimes y_k \right)_{k \in \mathcal{K}} &= 0 \\
    \v{Z}_A\v{x} = \PA \v{Z} \PAs \v{x} = \PA \left(Z_k \1 \otimes y_k \right)_{k \in \mathcal{K}}  &= 0 \\
    \v{M}_A\v{x} = \PA \v{M} \PAs \v{x} = \PA \left(M_k \1 \otimes y_k \right)_{k \in \mathcal{K}}  &= 0 
    \end{align*}
    We therefore have 
    \begin{align*}
        \mathcal{N}_A &\subseteq \nullspace{M}_A \\
        \mathcal{N}_A &\subseteq \nullspace{W}_A \\
        \mathcal{N}_A &\subseteq \nullspace{Z}_A. 
    \end{align*}
    Since $\nullspace(W_k) = \text{span}(\1)$, $W_k = M_k^\intercal M_k$, $Z \succeq W$, and $Z \1 = 0$, we have
    \begin{align*}
    \nullspace{(M_k)} &= \text{span}(\1) \\
    \nullspace{(Z_k)} &= \text{span}(\1) %
    \end{align*}
    and for any $\v{y}_k \in \Gk^{n_k}$ such that $\v{y}_k \neq \1 \otimes y_k$ for some $y_k \in \Gk$, $\v{y}_k$ is not in the null space of $M_k$, $W_k$, or $Z_k$.

    For any $\v{x} \notin \mathcal{N}_A$, at least one set of coupling constraints is not satisfied.
    Let $\v{y} = \PAs\v{x}$, and $\v{y}_k \in \Gk^{n_k}$ be a subvector of $\v{y}$ which does not satisfy the coupling constraint, and therefore is not in the span of the ones vector.
    We therefore have $\v{M}_k \v{y}_k \neq 0 $, $\v{W}_k \v{y}_k \neq 0$, and $\v{Z}_k \v{y}_k \neq 0$, and
    \begin{equation*}
        \nullspace{\v{M}_A} = \nullspace{\v{W}_A} = \nullspace{\v{Z}_A} = \mathcal{N}_A .%
    \end{equation*}
\end{proof}
A direct corollary of Lemma \ref{cab:Lem:nullspace} is the fact that 
\begin{equation}
    \range(\MT_A) = \mathcal{N}_A^\perp \label{cab:range_PM}.
\end{equation}
For $\v{x} \in \mathcal{N}_A$, there is a similar correspondence between $\PAs(\v{D} - 2\v{L})\PA\v{x}$ and $\mathcal{N}_A^\perp$, which we establish in Lemma \ref{cab:Lemma:DL1_W}.
\begin{lemma}
    \label{cab:Lemma:DL1_W}
    If $\v{x} \in \mathcal{N}_A$ then $(\v{D}_A - 2\v{L}_A)\v{x} \in \mathcal{N}_A^\perp$.
\end{lemma}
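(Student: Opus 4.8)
The plan is to verify directly that $(\v{D}_A - 2\v{L}_A)\v{x}$ satisfies the characterization of $\mathcal{N}_A^\perp$ in \eqref{cab:Nperp}, pushing everything through the permutation $\PA$ so that only the blockwise identity \eqref{cab:DL_sum} is needed.

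First I would record the reduction. Since $\v{x}\in\mathcal{N}_A$, there is $y\in\Gy$ with $\v{x}=\RA y$, so $\PAs\v{x}=\v{y}_{\1}=(\1_{n_k}\otimes y_k)_{k\in\mathcal{K}}$, as observed just after the definition of $\mathcal{N}_A$. Because $\PA$ is a permutation operator, $\PAs=\PA^{-1}$, and the defining relations in \eqref{cab:cabra_reqs} give $\v{D}_A = \PA\v{D}\PAs$ and $\v{L}_A = \PA\v{L}\PAs$, hence $\PAs\bigl[(\v{D}_A - 2\v{L}_A)\v{x}\bigr] = (\v{D}-2\v{L})\v{y}_{\1}$. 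Using the Kronecker structure $\v{D}_k - 2\v{L}_k = (D_k - 2L_k)\otimes\I$, the block of this vector indexed by $k$ is $\bigl[(D_k - 2L_k)\1_{n_k}\bigr]\otimes y_k$.

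Next I would compute the pairing against $\1_{n_k}$ on each block. For $w\in\R^{n_k}$ one has $\langle\1_{n_k},\,w\otimes y_k\rangle = (\1_{n_k}^\intercal w)\,y_k$, so the $k$-th block pairing equals $\bigl(\1_{n_k}^\intercal(D_k - 2L_k)\1_{n_k}\bigr)y_k$. Writing $D_k - 2L_k = (D_k - L_k - L_k^\intercal) + (L_k^\intercal - L_k) = Z_k + (L_k^\intercal - L_k)$ via \eqref{cab:Z_def}, and using $Z_k\1_{n_k}=0$ from \eqref{cab:Z_sum} together with $\1_{n_k}^\intercal(L_k^\intercal - L_k)\1_{n_k}=0$ by antisymmetry (this is exactly \eqref{cab:DL_sum} applied with $n'=n_k$), the scalar $\1_{n_k}^\intercal(D_k - 2L_k)\1_{n_k}$ vanishes, so the $k$-th block pairing is $0$ for every $k\in\mathcal{K}$. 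By the characterization \eqref{cab:Nperp} this already gives $(\v{D}_A - 2\v{L}_A)\v{x}\in\mathcal{N}_A^\perp$; equivalently, for any $\v{a}\in\mathcal{N}_A$ with $\PAs\v{a}=(\1_{n_k}\otimes y_k')_k$, the orthogonality of $\PA$ yields $\langle(\v{D}_A - 2\v{L}_A)\v{x},\v{a}\rangle = \sum_{k\in\mathcal{K}}\bigl(\1_{n_k}^\intercal(D_k - 2L_k)\1_{n_k}\bigr)\langle y_k,y_k'\rangle_{\GG_k}=0$.

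There is no genuine analytic obstacle here; the only thing to be careful about is the bookkeeping of the vector-valued pairing $\langle\1_{n_k},\cdot\rangle\colon\GG_k^{n_k}\to\GG_k$ and its compatibility with the Kronecker factorization $(D_k-2L_k)\otimes\I$, together with the (routine) fact that a permutation operator is orthogonal, so the inner product decouples over the $k$-blocks after applying $\PAs$.
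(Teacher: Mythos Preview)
Your proof is correct and follows essentially the same route as the paper's: push through the permutation $\PA$ to reduce to the blockwise identity $\1_{n_k}^\intercal(D_k-2L_k)\1_{n_k}=0$, which is obtained from $Z_k=D_k-L_k-L_k^\intercal$ and $Z_k\1_{n_k}=0$, and then conclude orthogonality either via the characterization \eqref{cab:Nperp} or by pairing against an arbitrary element of $\mathcal{N}_A$. The only cosmetic difference is that you invoke \eqref{cab:Nperp} first and then note the equivalent inner-product computation, whereas the paper does the inner-product computation directly.
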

\begin{proof}
    Let $\v{x}^\perp \in \mathcal{N}_A^\perp$ and $\v{x} \in \mathcal{N}_A$.
    We have already established by definition that
    \begin{align*}
        \left\langle \v{x}^\perp, \v{x}\right\rangle &= 0 \\
        \left\langle \PA\v{x}^\perp, \PA\v{x} \right\rangle &= 0 \\
        \PA\v{x} &= \left(\1 \otimes y_k\right)_{k \in \mathcal{K}}
    \end{align*}
    for some subvectors $y_k \in \Gk$.
    We therefore know that
    \begin{equation*}
        (\v{D} - 2\v{L})\PA\v{x} = \left((D_k - 2L_k)\1 \otimes y_k\right)_{k \in \mathcal{K}}.
    \end{equation*}
    By assumption \eqref{cab:Z_sum}, we know that each $Z_k$ is symmetric and $Z_k\1=0$, so $\1^\intercal Z_k \1 = 0$.
    We also know by assumption \eqref{cab:Z_def} that $Z_k = D_k - L_k - L_k^\intercal$, so $\1^\intercal (D_k - 2L_k)\1 = \1^\intercal Z_k \1 = 0$.
    This means that for each $k$, $\left\langle \1 \otimes y^1_k, (\v{D}_k - 2\v{L}_k)(\1 \otimes y_k)\right\rangle = 0$.
    For any $\v{x}^1 \in \mathcal{N}_A$ and $\v{y}^1 = \PA\v{x}^1 = \left(\1 \otimes y_k^1\right)_{k \in \mathcal{K}}$, we then have
    \begin{align*}    
        \left\langle \v{x}^1, \PAs (\v{D} - 2\v{L})\PA\v{x} \right\rangle &= \left\langle \PA\v{x}^1, (\v{D} - 2\v{L})\PA\v{x} \right\rangle \\
        &= \sum_{k=1}^p \left\langle \1 \otimes y_k^1, (D_k - 2L_k)(\1 \otimes y_k) \right\rangle \\
        &= 0.
    \end{align*}
    Therefore 
    \begin{equation}\label{cab:LIx_in_NT}
    \PAs(\v{D} - 2\v{L})\PA\v{x} \in \mathcal{N}_A^\perp \quad \forall \v{x} \in \mathcal{N}_A.
    \end{equation}
\end{proof}

\begin{lemma}
    \label{cab:Lem:LA_tri}
    The permuted operator $\PA \v{L} \PAs$ is strictly block lower triangular. 
\end{lemma}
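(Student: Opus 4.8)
The plan is to read off the block structure of $\PA\v{L}\PAs$ directly from the definitions of $\PA$ and of each $L_k$, and then to observe that the only genuine content is a monotonicity property of the index bookkeeping. The target is to show that the $(i,d)$-block of $\PA\v{L}\PAs$ vanishes whenever $d \ge i$, equivalently that $(\PA\v{L}\PAs\v{x})_i$ depends only on $(x_1,\dots,x_{i-1})$.

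First I would fix $\v{x} = (x_i)_{i\in\mathcal{I}} \in \Hx$ and set $\v{y} = \PAs\v{x}$, recording the correspondence induced by the permutation: for each $k \in \mathcal{K}$ and each $i \in \mathcal{I}_k$, the subvector $\v{y}_{k,s(i,k)} \in \GG_k$ is exactly the $\GG_k$-component of $x_i$. Then, since $\v{L}$ acts block-diagonally in $k$ via $\v{L}_k = L_k \otimes \I$ and $\PAi_i$ selects the $s(i,k)$-th slot of the $k$-block for each $k \in \mathcal{K}^A_i$, I would write
\[
(\PA\v{L}\PAs\v{x})_i \;=\; \Big(\, \sum_{s'=1}^{n_k} (L_k)_{s(i,k),\,s'}\, \v{y}_{k,s'} \,\Big)_{k\in\mathcal{K}^A_i}.
\]

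Next I would invoke $L_k = -\text{tri}(Z_k)$: the entry $(L_k)_{s(i,k),s'}$ vanishes unless $s' < s(i,k)$. The one step that requires care — and the main, if modest, obstacle — is to note that $\mathcal{I}_k$ is an increasing subset of $\{1,\dots,n\}$, so that $s' < s(i,k)$ forces the $s'$-th element of $\mathcal{I}_k$ to be strictly less than $i$. Consequently every surviving term $\v{y}_{k,s'}$ is a component of some $x_{i'}$ with $i' < i$, so $(\PA\v{L}\PAs\v{x})_i$ is a linear function of $(x_1,\dots,x_{i-1})$ alone, which is precisely the asserted strict block lower triangularity. No analytic ingredients enter; the entire argument is the verification that the permutation $\PA$ realizes the order-preserving reindexing $s(i,k)\leftrightarrow i$, so that the per-block strict lower triangularity of the $L_k$'s transfers to strict block lower triangularity in the global operator ordering.
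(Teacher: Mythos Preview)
Your proposal is correct and follows essentially the same approach as the paper: both arguments reduce to the observation that $s(\cdot,k)$ is order-preserving on $\mathcal{I}_k$, so the per-$k$ strict lower triangularity of $L_k$ forces each nonzero contribution to $(\PA\v{L}\PAs\v{x})_i$ to come from some $x_{i'}$ with $i' < i$. The paper phrases this via explicit scalar indices $\text{ind}(i,k) = \delta_i + o(i,k)$ in the flattened vector, whereas you work directly at the block level, but the content is the same.
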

\begin{proof}
    By definition, $L_k$ is strictly lower triangular for each $k$.
    Therefore if $(L_k)_{rc}$ is non-zero, $r>c$, and $r$ and $c$ correspond with operator indices $i_r$ and $i_c$ such that $r = s(i_r, k)$ and $c = s(i_c, k)$ and $i_r > i_c$ since $s(i,k)$ gives the order of $i$ in the ordered set $\mathcal{I}_k$.
    Let $o(i,k)$ give the order of entry $k$ in $\mathcal{K}^A_i$ and $\delta_i = \sum_{d=1}^{i-1}|\mathcal{K}^A_d| $.
    Given $\v{y} \in \Hy$, the permutation operator $\PA$ permutes $\v{y}_{k,s(i,k)}$ to the position in $x \in \Hx$ given by $\text{ind}(i,k) = \delta_i + o(i,k)$. 
    Therefore, in $\PA \v{L} \PAs$, the only non-zero entries correspond with the permuted locations of $i_r$ and $i_c$ given by row $\text{ind}(i_r, k)$ and column $\text{ind}(i_c, k)$ for all $k$.
    Since $i_c < i_r$, we have $\text{ind}(i_c, k) = \delta_{i_c} + o(i_c,k) \leq \delta_{i_c} + |\mathcal{K}^A_{i_c}| < \delta_{i_r} + 1 \leq \delta_{i_r} + o(i_r,k) = \text{ind}(i_r, k)$.
    Therefore, with block cutoffs in $\v{L}_A$ defined by $d_i$, $\PA \v{L} \PAs$ is strictly block lower triangular.
\end{proof}

\begin{lemma}
    \label{cab:Lemma:QBK_lower_triangular_coupled}
    Operator $\v{Q}_A\v{B}\v{K}_A$ is strictly lower triangular in the sense that $\left(\v{Q}_A\v{B}\v{K}_A \v{x}\right)_i$ depends only on subvectors $x_d$ of $\v{x}$ such that $d < i$.
\end{lemma}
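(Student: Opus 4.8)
The plan is to follow a single subvector's dependencies through the three factors of $\v{Q}_A\v{B}\v{K}_A = (\PA\v{Q}\PBs)\circ\v{B}\circ(\PB\v{K}\PAs)$, using the support conditions \eqref{cab:QK_triangle} and \eqref{cab:cutoff_assumptions} on the rows of each $K_k$ and the columns of each $Q_k$, together with the combinatorial fact that $s(i,k)\le s^k_j \iff i\le i^*_j$. The latter holds because $\mathcal{I}_k$ is an increasing set and $s^k_j$ is by definition the position in $\mathcal{I}_k$ of the largest index not exceeding $i^*_j$, so the entries of $\mathcal{I}_k$ at positions $\le s^k_j$ are exactly those $\le i^*_j$. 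Operators $B_j$ with $m_k=0$ contribute nothing since then $Q_k=K_k=0$, so it suffices to treat the $m_k\ge 1$ case.

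First I would analyze $\v{K}_A\v{x}=\PB\v{K}\PAs\v{x}$. Setting $\v{y}=\PAs\v{x}$, the $B_j$-block of $\v{K}_A\v{x}$ is $\bigl((\v{K}_k\v{y}_k)_{t(j,k)}\bigr)_{k\in\mathcal{K}^B_j}$ with $(\v{K}_k\v{y}_k)_{t(j,k)}=\sum_s (K_k)_{t(j,k),s}\,\v{y}_{k,s}$. By \eqref{cab:QK_triangle} a nonzero entry in row $t(j,k)$ of $K_k$ has $s\le\bar{s}_{t(j,k)}$, and by \eqref{cab:cutoff_assumptions} $\bar{s}_{t(j,k)}\le s^k_j$; hence only $\v{y}_{k,s}$ with $s\le s^k_j$, i.e.\ only $\v{y}_{k,s(i,k)}$ with $i\le i^*_j$, appear. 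Since $\PAi_i\v{y}=(\v{y}_{k,s(i,k)})_{k\in\mathcal{K}^A_i}$, the block $\v{y}_{k,s(i,k)}$ is part of $x_i$, so $(\v{K}_A\v{x})_j$ depends only on subvectors $x_d$ with $d\le i^*_j$. As $\v{B}$ is block diagonal, $(\v{B}\v{K}_A\v{x})_j=B_j(\v{K}_A\v{x})_j$ inherits the same dependency.

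Next I would analyze $\v{Q}_A\v{b}=\PA\v{Q}\PBs\v{b}$ with $\v{b}=\v{B}\v{K}_A\v{x}$. Permuting, $(\PBs\v{b})_{k,t(j,k)}$ carries the $\GG_k$-part of $\v{b}_j$, and $(\v{Q}_k(\PBs\v{b})_k)_s=\sum_t(Q_k)_{s,t}(\PBs\v{b})_{k,t}$. By \eqref{cab:QK_triangle} a nonzero entry in column $t(j,k)$ of $Q_k$ has $s\ge\ubar{s}_{t(j,k)}$, and by \eqref{cab:cutoff_assumptions} $\ubar{s}_{t(j,k)}>s^k_j$; hence $\v{b}_j$ influences $(\v{Q}_k(\PBs\v{b})_k)_s$ only for $s>s^k_j$, i.e.\ only for $s=s(i,k)$ with $i>i^*_j$. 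Applying $\PA$, the entry $(\v{Q}_k(\PBs\v{b})_k)_{s(i,k)}$ lies inside block $x_i$, so $(\v{Q}_A\v{b})_i$ receives a contribution from $\v{b}_j$ only when $i>i^*_j$. Combining: whenever $\v{b}_j$ feeds $(\v{Q}_A\v{B}\v{K}_A\v{x})_i$ we have $i>i^*_j$, while $\v{b}_j$ depends only on $x_d$ with $d\le i^*_j<i$; therefore $(\v{Q}_A\v{B}\v{K}_A\v{x})_i$ depends only on subvectors $x_d$ with $d<i$.

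The main obstacle I anticipate is bookkeeping rather than any genuine difficulty: one must keep the three index systems aligned through the permutations $\PA$, $\PB$ and their adjoints/inverses, namely the position $s$ within $\mathcal{I}_k$, the operator index $i$, and the cutoff $i^*_j$ with its induced position $s^k_j$ in each $\mathcal{I}_k$ for $k\in\mathcal{K}^B_j$. In particular, care is needed to verify $s(i,k)\le s^k_j\iff i\le i^*_j$ directly from the definition of $s^k_j$, and to check that $s^k_j$ always satisfies $\bar{s}_{t(j,k)}\le s^k_j<n_k$ so that \eqref{cab:cutoff_assumptions} is nonvacuous (this follows from $\bar i_j\le i^*_j<\ubar i_j$). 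The underlying mechanism — strict block lower triangularity of $K_k$ below $\bar{s}_{t}$, strict block upper triangularity of $Q_k$ above $\ubar{s}_{t}$, with a gap straddling $s^k_j$ — is the coordinate-blockwise analogue of the $p=1$ argument.
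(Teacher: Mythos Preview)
Your proposal is correct and follows essentially the same approach as the paper's proof: trace the dependency of the $j$-th block through $\v{K}_A$, then $\v{B}$, then $\v{Q}_A$, using the support conditions on $K_k$ and $Q_k$ together with the cutoff $i^*_j$ to show that inputs to $B_j$ come only from indices $d\le i^*_j$ while outputs of $B_j$ feed only indices $i>i^*_j$. Your write-up is in fact more explicit than the paper's about the index bookkeeping (in particular the equivalence $s(i,k)\le s^k_j\iff i\le i^*_j$), and one small cosmetic point is that the inequalities ``nonzero entry in row $t$ of $K_k$ has $s\le\bar s_t$'' and ``nonzero entry in column $t$ of $Q_k$ has $s\ge\ubar s_t$'' come from the \emph{definitions} of $\bar s_t$ and $\ubar s_t$ rather than from \eqref{cab:QK_triangle} itself, with \eqref{cab:cutoff_assumptions} then placing $s^k_j$ between them.
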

\begin{proof}
    By definition $\v{Q}_A\v{B}\v{K}_A = \PA \v{Q} \PBs \v{B} \PB \v{K} \PAs$.
    By \eqref{cab:cutoff_assumptions}, the maximum operator index $i_k$ which feeds the output associated with $B_j$ in $K_k$ for $k \in \mathcal{K}^B_j$ must be less than or equal to $i^*_j$ for all $k$.
    By definition, permutation operator $\PAs$ preserves operator ordering as it groups the ordered input for each $K_k$, and permutation operator $\PB$ preserves operator ordering as it groups the outputs which feed cocoercive operator $B_j$ from each $K_k$ where $k \in \mathcal{K}^B_j$.
    Therefore $(\PB \v{K} \PAs)_j$ depends only on operators $A_i$ for $i \leq i^*_j$ (that is, $(\PB \v{K} \PAs)_{ji} = 0$ for all $i > i^*_j$).
    The equations given by \eqref{cab:cutoff_assumptions} also require, for each $j \in \mathcal{J}$ and each $k \in \mathcal{K}^B_j$, that $Q_k$ equals zero for all entries associated with operators $A_i$ and $B_j$ where $i \leq i^*_j$. 
    This means that $(\PA \v{Q} \PBs)_{ij} = 0$ for all $i \leq i^*_j$. 
    Therefore $B_j (\PB \v{K} \PAs)_j$ is not required by all operators $A_i$ where $i \leq i^*_j$, and 
    for $i > i^*_j$, $B_j (\PB \v{K} \PAs)_j$ depends only on outputs from $A_{i'}$ where $i' \leq i^*_j < i$, and can be found prior to the computation of $A_i$.
\end{proof}

\begin{lemma}
    \label{cab:Lemma:boundedness_coupled}
    For any sequence $(\v{z}^k)$ given by $\v{z}^{k+1} = T(\v{z}^k)$, its corresponding sequence $(\v{x}^k)$ given by \eqref{cab:cabra_d_itr1} is also bounded.
\end{lemma}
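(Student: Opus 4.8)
The plan is to turn the statement into a block-by-block induction driven by the forward-substitution form \eqref{cab:cfbd_itr_i} of step \eqref{cab:cabra_d_itr1}. Concretely, I will prove the implication ``$(\v{z}^k)$ bounded $\Rightarrow$ $(\v{x}^k)$ bounded'' (boundedness of $(\v{z}^k)$ is exactly what weak convergence delivers in the main theorem). Two structural facts set this up. First, by Lemmas \ref{cab:Lem:LA_tri} and \ref{cab:Lemma:QBK_lower_triangular_coupled}, both $\v{L}_A$ and $\v{Q}_A\v{B}\v{K}_A$ are strictly block lower triangular, so $(\v{L}_A\v{x}^k)_i$ and $b_i^k := (\v{Q}_A\v{B}\v{K}_A\v{x}^k)_i$ depend only on $x_1^k,\dots,x_{i-1}^k$. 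Second, \eqref{cab:afb_reqs} forces each $D_k=\diag(Z_k)$ to be positive definite, so the blocks $(\v{D}_A)_i$ of $\v{D}_A=\PA\v{D}\PAs$ are positive definite, and \eqref{cab:cfbd_itr_i} writes $x_i^k = J_{\alpha(\v{D}_A^{-1})_iA_i}(w_i^k)$ with $w_i^k := (\v{D}_A^{-1})_i\big[-(\MT_A\v{z}^k)_i + 2\sum_{d<i}(\v{L}_A)_{id}x_d^k - \alpha b_i^k\big]$.

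First I would record that each block resolvent is single-valued, has full domain, and is nonexpansive in the metric $\langle(\v{D}_A)_i\,\cdot\,,\cdot\rangle$: if $v=J_{\alpha(\v{D}_A^{-1})_iA_i}(w)$ then $(\v{D}_A)_i(w-v)\in\alpha A_iv$, so for two arguments $w,\tilde w$ with images $v,\tilde v$ the monotonicity of $A_i$ gives $\langle v-\tilde v,(\v{D}_A)_i((w-\tilde w)-(v-\tilde v))\rangle\ge 0$, hence $\norm{v-\tilde v}_{(\v{D}_A)_i}\le\norm{w-\tilde w}_{(\v{D}_A)_i}$ by Cauchy--Schwarz in this inner product; single-valuedness and full domain follow from maximal monotonicity of $A_i$. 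Fix the reference point $\tilde v_i := J_{\alpha(\v{D}_A^{-1})_iA_i}(0)$.

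Then I would induct on $i\in\{1,\dots,n\}$. Suppose $(x_1^k)_k,\dots,(x_{i-1}^k)_k$ are bounded. Then $(w_i^k)_k$ is bounded: $-(\MT_A\v{z}^k)_i$ is bounded since $(\v{z}^k)$ is bounded and $\MT_A$ is bounded linear; each $(\v{L}_A)_{id}x_d^k$ with $d<i$ is bounded by the inductive hypothesis; and $b_i^k$ is bounded because $\v{B}$, being cocoercive, is Lipschitz and single-valued with full domain, $\v{K}_A,\v{Q}_A$ are bounded linear, and $b_i^k$ only involves the coordinates of $\v{B}\v{K}_A\v{x}^k$ that depend on $x_1^k,\dots,x_{i-1}^k$, so $\norm{b_i^k}\le c_1 + c_2\sum_{d<i}\norm{x_d^k}$. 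Nonexpansiveness gives $\norm{x_i^k-\tilde v_i}_{(\v{D}_A)_i}\le\norm{w_i^k}_{(\v{D}_A)_i}$, which is bounded; since $(\v{D}_A)_i\succ 0$ the norm $\norm{\cdot}_{(\v{D}_A)_i}$ is equivalent to $\norm{\cdot}$ on $\HH_i$, so $(x_i^k)_k$ is bounded. The base case $i=1$ is the same with both sums absent. Stacking the blocks, $\v{x}^k=(x_i^k)_{i\in\mathcal{I}}$ is bounded.

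The only point that needs care is that the induction actually closes: I must be sure $w_i^k$ depends on $\v{z}^k$ and only on the earlier blocks $x_d^k$ ($d<i$), with no self-reference to $x_i^k$ or later blocks --- this is precisely the strict block lower triangularity of $\v{L}_A$ and $\v{Q}_A\v{B}\v{K}_A$ combined with the block-diagonality of $\v{D}_A$. The variable-metric nonexpansiveness is routine but must be stated in the $(\v{D}_A)_i$-inner product rather than the ambient one. I expect the triangular-structure bookkeeping --- in particular verifying that the Lipschitz estimate for $b_i^k$ genuinely closes on $x_{<i}^k$ --- to be the main obstacle, since it is what rules out a circular bound on $\v{x}^k$.
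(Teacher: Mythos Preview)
Your proposal is correct and follows essentially the same approach as the paper: induction on the block index $i$, using the strict block lower triangularity of $\v{L}_A$ and $\v{Q}_A\v{B}\v{K}_A$ (Lemmas \ref{cab:Lem:LA_tri} and \ref{cab:Lemma:QBK_lower_triangular_coupled}) together with the Lipschitz continuity of $\v{B}$ and of each block resolvent to propagate boundedness forward. Your treatment is in fact slightly more careful than the paper's, which invokes ``nonexpansivity of the resolvent'' directly in the ambient norm without mentioning the $(\v{D}_A)_i$-metric or a reference point; your explicit use of the variable-metric inner product and the anchor $\tilde v_i = J_{\alpha(\v{D}_A^{-1})_iA_i}(0)$ makes the Lipschitz bound rigorous.
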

\begin{proof}
    This can be shown by induction over the sequences $(x_i^k)$ of $(\v{x}^k)$.
Beginning with $i=1$, we have by the nonexpansivity of the resolvent
\begin{align*}
    \norm{x_1^k} &= \norm{J_{\alpha(\v{D}_A^{-1})_1 A_1}\left(-(\v{D}_A^{-1})_1 (\MT_A\v{z}^k)_1\right)} \\
                 &\leq \norm{(\v{D}_A^{-1})_1 (\MT_A\v{z}^k)_1}
\end{align*}
which is bounded by the boundedness of $(\v{z}^k)$, $\v{D}_A^{-1}$, and $\v{M}_A$.

Suppose $(x_d^k)$ is bounded for $d < i$.
We now show that $(x_i^k)$ is bounded.
By Lemmas \ref{cab:Lem:LA_tri} and \ref{cab:Lemma:QBK_lower_triangular_coupled}, $(\v{L}_A \v{x})_i$ and $(\v{Q}_A \v{B} \v{K}_A \v{x})_i$ depend only on values $x_d^k$ for $d < i$.
Therefore $(\v{L}_A\v{x}^k)_i$ and $(\v{K}_A\v{x}^k)_j$ for all $Q_{ij} \neq 0$ are bounded by the induction hypothesis and the boundedness of $\v{L}_A$ and $\v{K}_A$, and $(\v{Q}_A\v{B}\v{K}_A\v{x}^k)_i$ is bounded by the boundedness of $\v{Q}_A$ and the cocoercivity of $\v{B}$.
The boundedness of $(\v{z}^k)$ implies the boundedness of $(\MT_A \v{z}^k)_{i}$, and we therefore see that 
\begin{align*}
    \norm{x_i^k} &= \norm{J_{\alpha(\v{D}_A^{-1})_i A_i}\left((\v{D}_A^{-1})_i\left[-(\MT_A \v{z}^k)_{i} - \alpha(\v{Q}_A\v{B}\v{K}_A\v{x}^k)_i + (2\v{L}_A\v{x}^k)_i\right]\right)} \\
    &\leq \norm{(\v{D}_A^{-1})_i (\MT_A \v{z}^k)_{i}} + \alpha\norm{(\v{D}_A^{-1})_i (\v{Q}_A\v{B}\v{K}_A\v{x}^k)_i} + \norm{(\v{D}_A^{-1})_i (2\v{L}_A\v{x}^k)_i},
\end{align*}
and $\norm{x_i^k}$ is bounded for all $k$.
Therefore, by induction, $(\v{x}_i^k)$ is bounded for all $i \in \mathcal{I}$, and $(\v{x}^k)$ is bounded.
\end{proof}

\clearpage
\begin{algorithm} 
    {\scriptsize
    \caption{Decentralized Expanded CABRA}\label{cab:cabra_n_decentralized_alg} 
    \begin{algorithmic}[1] 
    \Require $\alpha \in (0,4)$; $\gamma \in (0, 2 - \frac{\alpha}{2})$; $\v{v}^0 \in \mathcal{N}^\perp$; $\v{D}_A^{-1}$, $\v{W}_A$, $\v{L}_A$, $\v{Q}_A$, $\v{K}_A$ satisfying \eqref{cab:cabra_reqs}.
    \State $\nu \gets 0$
    \Repeat
    \For{$i=1, \dots, n$ in parallel}
        \For{$d=1, \dots, i-1$}
            \For{$k \in \mathcal{K}^A_d \bigcap \mathcal{K}^A_i : (L_k)_{id} \ne 0$ \textbf{or} $(W_k)_{id} \ne 0$}
                \State Receive $(x_d)^\nu_k$ from maximal monotone operator $d$
            \EndFor
            \For{$j=1, \dots, m: i^*_j = d$}
                \For{$k \in \mathcal{K}^B_j\bigcap \mathcal{K}^A_i : (Q_k)_{ij} \ne 0$}
                    \State Receive $(b_j)^\nu_k$ from cocoercive operator $j$
                \EndFor
            \EndFor
        \EndFor
        \State $\v{x}_i^\nu = J_{\alpha (\v{D}_A^{-1}\v{A})_i}\left((\v{D}_A^{-1})_i\left[\v{v}_i^\nu + 2(\v{L}_A\v{x}^\nu)_i - \alpha (\v{Q}_A\v{b}^\nu)_i\right]\right)$ \label{cab:cabra_n_decent_itr1}
        
        \For{$k \in \mathcal{K}^A_i$}
            \For{$d=i+1,\dots, n : k \in \mathcal{K}^A_d$}
                \If{$(L_k)_{di} \ne 0$ \textbf{or} $W_{di} \ne 0$}
                    \State Send $(x_i)^\nu_k$ to maximal monotone operator $d$
                \EndIf
            \EndFor
            \For{$j=1, \dots, m: k \in \mathcal{K}^B_j$ \textbf{and} $(\v{K}_k)_{ji} \ne 0$}
                \State Send $(x_i)^\nu_k$ to cocoercive operator $j$
            \EndFor
            \For{$d=1, \dots, i-1: k \in \mathcal{K}^A_d \bigcap \mathcal{K}^A_i$ \textbf{and} $W_{id} \ne 0$}
                \State Send $(x_i)^\nu_k$ to maximal monotone operator $d$
            \EndFor
            \For{$d=i+1, \dots, n$}
            \If{$k \in \mathcal{K}^A_d$ \textbf{and} $W_{id} \ne 0$}
                    \State Receive $(x_d)^\nu_k$ from maximal monotone operator $d$
            \EndIf
            \EndFor
        \EndFor
        \State $\v{v}_i^{\nu+1} = \v{v}_i^\nu - \gamma (\v{W}_A\v{x}^\nu)_i$ \label{cab:cabra_n_decent_itr2}
    \EndFor
    \For{$j=1, \dots, m$ in parallel with above}
        \For{$i = 1, \dots, i^*_j$}
            \For{$k \in \mathcal{K}^B_j \bigcap \mathcal{K}^A_i: (K_k)_{ji} \ne 0$}
                \State Receive $(x_i)^\nu_k$ from maximal monotone operator $i$
            \EndFor
        \EndFor
        \State $\v{b}_j^\nu = \alpha \v{B}_j(\v{K}_A\v{x}^\nu)_j$ \label{cab:cabra_n_decent_itr_b_1}
        \For{$i = i^*_j + 1, \dots, n$}        
            \For{$k \in \mathcal{K}^B_j \bigcap \mathcal{K}^A_i: (Q_k)_{ij} \ne 0$}
                \State Send $(b_j)^\nu_k$ to maximal monotone operator $i$
            \EndFor
        \EndFor
    \EndFor
    \State $\nu \gets \nu+1$
    \Until{convergence}
\end{algorithmic}
    }
\end{algorithm}

\end{document}